\documentclass[12pt]{article}
\usepackage{graphicx}
\usepackage{latexsym,amssymb}
\usepackage{amsthm}
\usepackage{indentfirst}
\usepackage{amsmath}
\usepackage{mathrsfs}
\usepackage{enumerate}
\usepackage{color}
\usepackage{xcolor}
\usepackage{hyperref}
\usepackage{fourier}
\usepackage{verbatim}
\usepackage[all]{xy}

%
\usepackage[a4paper,text={160true mm,230true mm},centering,top=35true mm,head=5true mm,headsep=2.5true mm,foot=8.5true mm]{geometry}

\newtheorem{theorem}{Theorem}[section]
\newtheorem{definition}{Definition}[section]
\newtheorem{proposition}[theorem]{Proposition}
\newtheorem{lemma}[theorem]{Lemma}
\newtheorem{corollary}[theorem]{Corollary}

\newtheorem{claim}[theorem]{Claim}

\newtheorem{theoremalph}{Theorem}
\newtheorem{corollaryalph}{Corollary}

 \def\NN{{\mathbb N}}

 \def\ZZ{{\mathbb Z}}

   \def\cM{{\cal M}} 
    
\def\cC{{\cal C}}   \def\cO{{\cal O}} \def\U{{\cal U}}
   \def\cP{{\cal P}} 
    
   \def\cR{{\cal R}}

\def\dim{\operatorname{dim}}

\def\supp{\operatorname{Supp}}

\linespread{1.1}
\setcounter{tocdepth}{1}
\setcounter{secnumdepth}{3}

\begin{document}
\begin{sloppypar}
\title{Stable manifolds, Horseshoes and Lyapunov exponents for $C^1$ diffeomorphisms without domination}

\author{Yongluo Cao, Zeya Mi and Rui Zou\footnote{R. Zou is the corresponding author. Y. Cao is partially supported by National Key R\&D Program of China (2022YFA1005802) and NSFC 12371194. Z. Mi is partially supported by National Key R\&D Program of China (2022YFA1007800) and NSFC 12271260. R. Zou is partially supported by National Key R\&D Program of China (2022YFA1007800) and NSFC (nos 12471185, 12271386).
}}

\date{}
\maketitle

\begin{abstract} 
We develop the nonuniformly hyperbolic theory for $C^1$ diffeomorphisms admitting continuous invariant splitting without domination. 
This framework includes stable manifold theorems, shadowing   and closing lemmas, the existence of horseshoes and the approximation of Lyapunov exponents. 
The foundation is a new family of resonance blocks, each arising as the forward limit set of a typical point at carefully chosen resonance times where expansion, contraction and a weak scale-dependent domination coexist.
\end{abstract}
\tableofcontents

\section{Introduction}\label{SEC:1}

Nonuniformly hyperbolic theory, or Pesin theory, developed by Pesin, Katok, and others (see e.g. \cite{FY83,Ka80,P76,P77,R79,ly,ly2,BP07,LianLS12}) provides a firm foundation for understanding chaotic phenomena in smooth dynamical systems.  This theory classically requires $C^{1+\alpha}$ regularity ($\alpha>0$).
A long-standing challenge is to decide whether the theory survives when the \(C^{1+\alpha}\) hypothesis is relaxed to mere \(C^{1}\).  The present paper answers this question affirmatively for the  natural class of \(C^{1}\) systems ever considered: those possessing a continuous invariant splitting.  
 
 That \(C^{1}\) regularity alone is insufficient was shown  by Pugh \cite{P84} and  Bonatti-Crovisier-Shinohara \cite{BC13}: non-zero Lyapunov exponents do not guarantee the existence of stable manifolds.
 In response,  a parallel ``\(C^{1}\)+domination" theory has emerged in which the lacking \(C^{1+\alpha}\) smoothness is replaced by a uniform domination assumption.  
Within that framework,  Ma\~n\'e  \cite{M84} announced and      Abdener-Bonatti-Crovisier\cite{ABC} and Avila-Bochi\cite{AB}  proved  the  stable manifold theorem; 
 entropy formulas were   studied by Tahzibi\cite{T02}, Sun-Tian \cite{ST12}, Wang-Wang-Zhu \cite{WWZ} and Gan-Tong-Yang \cite{GYY25} recently;  
 horseshoe approximations  were established by by Gelfert\cite{Ge16} and Wang-Cao-Zou \cite{WCZ21}. 
Sun-Tian \cite{ST14} further enlarged the scope to the Liao-Pesin set, a unified structure that contains both dominated and limit-dominated splittings, and  developed the Pesin theory inside this class.  A natural question arises: 
\begin{center}
	\emph{Can the   nonuniform hyperbolicity theory be recovered for   $C^1$
	systems  {\bf without} any domination hypothesis?}
\end{center}

Our starting point is the observation made in \cite{CMZ25}: even in the complete absence of domination, (un)stable manifolds exist for $C^1$ diffeomorphisms as soon as the Oseledets splitting is continuous.
This   fact re-opens the door to the entire Pesin programme.
Here we transform that mere existence statement into a full Pesin-type theory.  Under the assumption that a  $C^1$ diffeomorphism  admits a continuous invariant splitting without domination,
we prove:  

\begin{itemize}
	\item[\bf (I)] A \(C^{1}\) stable manifold theorem (Theorem~\ref{main:stable}) that  recovers all essential features of the classical \(C^{1+\alpha}\) result, thereby strengthening the prior existence-only result \cite[Theorem 1.1]{CMZ25}. 
	
	\item[\bf (II)] Shadowing and closing lemmas (Theorem~\ref{main:shadowing}):  every sufficiently long pseudo-orbit in the regular blocks is shadowed by a unique genuine orbit, and periodic pseudo-orbits are closed by hyperbolic periodic points.
	
	\item[\bf (III)]  Existence of horseshoes(Theorem~\ref{main:horseshoe}), yielding the exponential growth-rate  
	   of periodic points (Corollary~\ref{Cor periodic}).
	
	 	\item[\bf (IV)] Approximation of Lyapunov exponents by those of periodic points and horseshoes (Theorems~\ref{periodic approximation} and~\ref{horseshoe approximation}).
\end{itemize}

Taken together, these results extend the framework of nonuniform hyperbolicity to the ``$C^1+$continuous" setting in a nearly optimal form.
The key mechanism behind the extension is a novel family of \emph{resonance blocks} $\{\Lambda_t\}_{t\ge 1}$ (see Section \ref{sec:block}), which builds upon and significantly refines the approach introduced in \cite{CMZ25}. Specifically, each
\(\Lambda_{t}\) is built from the gaps between Lyapunov exponents and captures, at a precise scale \(t\), coherent expansion and contraction 
along the forward orbit of typical points. The blocks also provide average domination over the recurrence period, guaranteeing that every admissible manifold expands/contracts uniformly within that window.  
This combination of properties allows us to fully reproduce the classical Pesin block philosophy within the $C^1$ setting.

\section{Statement of Main results}
{We state now the main results of our paper. Throughout, let $M$ be a compact Riemmannian manifold without boundary. Let $d$ denote the distance on $M$ induced by its Riemannian metric. Denote by ${\rm Diff}^1(M)$ the space of $C^1$ diffeomorphisms endowed with the usual $C^1$-topology. For $f\in {\rm Diff}^1(M)$, let $\cM_f(M)$ denote the space of $f$-invariant Borel probability measures on $M$.
Let $\{\varphi_j\}_{j=1}^{\infty}$ be  a countable dense subset of the unit sphere  in $C(M)$, which gives the distance $D$ of $\cM_f(M)$ given by
$$
D(\mu,\nu)=\sum_{j=1}^{\infty}\frac{|\int \varphi_jd\mu-\int \varphi_jd\nu|}{2^j}, \quad \forall \mu,\nu\in \cM_f(M).
$$


}

\subsection{Stable and unstable manifolds}

Let \(f \in \mathrm{Diff}^1(M)\), \(\mu \in \mathcal{M}_f(M)\), and let \(E \subset TM\) be a \(Df\)-invariant subbundle.  
By the Subadditive Ergodic Theorem, for \(\mu\)-almost every \(x \in M\) the minimal and maximal Lyapunov exponents along \(E\) are defined respectively by

\[
\chi_E^-(x,f)=\lim_{n\to +\infty}\frac{1}{n}\log m\bigl(Df^n|_{E(x)}\bigr), \qquad
\chi_E^+(x,f)=\lim_{n\to +\infty}\frac{1}{n}\log\bigl\|Df^n|_{E(x)}\bigr\|,
\]
where \(m(L)=\inf_{\|v\|=1}\|Lv\|=\|L^{-1}\|^{-1}\) denotes the co-norm of a linear isomorphism \(L\).  
When \(\mu\) is ergodic, the functions \(\chi_E^-(\,\cdot\,,f)\) and \(\chi_E^+(\,\cdot\,,f)\) are \(\mu\)-a.e. constant; their essential values are denoted by \(\chi_E^-(\mu,f)\) and \(\chi_E^+(\mu,f)\).

For a subset $\U\subset M$, we say that the splitting 
$
T_{\U}M=E^1\oplus E^2
$
is a \emph{continuous invariant splitting} if both $E^1$ and $E^2$ are continuous subbundles over $\U$, and for i=1,2, 
$$
Df_x(E^i_x)=E^i_{f(x)}~ \textrm{if}~x, f(x)\in \U.
$$

\begin{theoremalph}\label{main:stable}
Let \( f \in \mathrm{Diff}^1(M) \) preserving an ergodic measure $\mu$. Suppose there exists an open neighborhood \( \mathcal{U} \) of \( \mathrm{supp}(\mu) \) admitting a continuous \( Df \)-invariant splitting:
\(
T_{\mathcal{U}} M = E \oplus F.
\)

If the Lyapunov exponents satisfy
\[
\chi_F^{+}(\mu, f) < \min\left\{ 0,\, \chi_E^{-}(\mu, f) \right\},
\]
then for every sufficiently small \( \varepsilon > 0 \), there exist Borel functions \( \delta(x) > 0 \) and \( A(x) > 0 \) such that for \( \mu \)-almost every \( x \), there is a local stable manifold \( W^F_{\mathrm{loc}}(x, f) \) satisfying the following:
\begin{enumerate}[(1)]
    \item\label{sidf} \( W^F_{\mathrm{loc}}(x, f) \) is an embedded \( C^1 \)-disk centered at \( x \), with radius at least \( \delta(x) \), and tangent to \( F \).
    \item\label{contsf} For all \( y, z \in W^F_{\mathrm{loc}}(x, f) \) and all \( n \geq 1 \),
    \[
    d(f^n(y), f^n(z)) \leq A(x) \cdot {\rm e}^{(\chi_F^{+}(\mu, f) + \varepsilon)n} \cdot d(y, z).
    \]
    \item\label{subexp} The sequences \( \{\delta(f^n(x))\}_{n \in \mathbb{Z}} \) and \( \{A(f^n(x))\}_{n \in \mathbb{Z}} \) vary sub-exponentially along the orbit for \( \mu \)-almost every \( x \).
\end{enumerate}
Moreover, for \( \mu \)-almost every \( x \), the global stable manifold
\[
W^F(x, f) = \bigcup_{n \geq 0} f^{-n}\left( W^F_{\mathrm{loc}}(f^n(x), f) \right)
\]
is an injectively immersed \( C^1 \) submanifold tangent to \( F \), and admits the following characterization:
\begin{equation}\label{charc}
W^F(x, f) = \left\{ y \in M : \limsup_{n \to +\infty} \frac{1}{n} \log d(f^n(x), f^n(y)) \leq \chi_F^{+}(\mu, f) \right\}.
\end{equation}
\end{theoremalph}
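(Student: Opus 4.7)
The plan is to run the classical Pesin-block strategy with the resonance blocks $\{\Lambda_t\}_{t\ge 1}$ of Section~\ref{sec:block} in place of Pesin blocks. Those blocks exhaust $\mu$-almost every point, and on each $\Lambda_t$ they provide, at carefully chosen \emph{resonance times} $n_k=n_k(x)$, uniform quantitative control of three items simultaneously: the $F$-contraction, the $E$-expansion, and a scale-$t$ averaged domination between the two bundles over each window $[n_k,n_{k+1}]$. These three properties together replace the uniform domination and the $C^{1+\alpha}$ Hölder regularity on which the classical theory depends, and they are precisely calibrated so that a graph-transform argument closes within $C^1$.

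Working first uniformly on a fixed $\Lambda_t$, for each $x\in\Lambda_t$ and each resonance time $n_k$ I would set up, in local charts adapted to the continuous splitting $E\oplus F$, a Hadamard--Perron / graph transform on the space of $C^1$ admissible graphs $\Phi\colon F(x)\to E(x)$ of $C^1$-norm at most $1$ and radius $\delta_t$. The block-uniform $F$-contraction and $E$-expansion handle the linear part, and the window-averaged domination compensates for the missing Hölder modulus of $Df$ when estimating the non-linear error across $[n_k,n_{k+1}]$; together they make the transform a uniform contraction whose unique fixed graph is the local stable disk $W^F_{\mathrm{loc}}(x,f)$. This yields~\eqref{sidf} and~\eqref{contsf} with uniform constants $\delta_t,A_t$ on $\Lambda_t$. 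Setting $\delta(x):=\delta_{t(x)}$ and $A(x):=A_{t(x)}$ on the smallest block containing $x$, the subexponential variation~\eqref{subexp} follows from a Birkhoff / Borel--Cantelli argument: since $\mu(\Lambda_t)\to 1$, the block-index $n\mapsto t(f^n x)$ grows subexponentially along $\mu$-a.e.\ orbit. The global stable manifold $W^F(x,f)=\bigcup_{n\ge 0}f^{-n}(W^F_{\mathrm{loc}}(f^n x,f))$ is then an injectively immersed $C^1$ submanifold tangent to $F$, because~\eqref{contsf} together with the subexponential size control guarantees compatible nesting of the family.

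For the characterization~\eqref{charc}, the inclusion ``$\subseteq$'' follows from~\eqref{contsf} applied after an iterate at which $f^n(y)$ enters the local stable disk of $f^n(x)$, letting $\varepsilon\to 0$. For the converse, given $y$ with $\limsup_n \tfrac{1}{n}\log d(f^n x,f^n y)\le \chi_F^+(\mu,f)$, I decompose the connecting vector at each resonance time $n_k$ into its $E$- and $F$-components. The cone-field structure furnished by $\Lambda_{t(x)}$ forces the vector to lie in either the $E$-cone or the $F$-cone; if it ever lay in the $E$-cone, the block's uniform $E$-expansion across the next window would produce a separation rate at least $\chi_E^--\varepsilon>\chi_F^+$, contradicting the hypothesis on $y$. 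Hence the connecting vector eventually stays in the $F$-cone, and the standard graph-characterization of the local stable disk then places $f^N(y)\in W^F_{\mathrm{loc}}(f^N x,f)$ for some $N$, so $y\in W^F(x,f)$.

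The main obstacle is making the graph transform in the second paragraph truly contracting in the $C^1+$continuous setting: neither pointwise Hölder control of $Df$ nor uniform domination is available to absorb the non-linear error. This is exactly what the window-averaged domination built into the resonance blocks delivers — an averaged, scale-$t$-dependent domination that replaces the Hölder modulus and lets the fixed-point argument close on the space of $C^1$ admissible graphs, with constants that remain uniform on $\Lambda_t$. Once that step is in place, the remainder of the proof follows the classical Pesin template with the resonance blocks playing the role of Pesin blocks.
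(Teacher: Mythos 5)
Your proposal takes a genuinely different route from the paper, and I think the route does not close. You propose to run a Hadamard--Perron graph transform on the space of $C^1$ admissible graphs over $F(x)$ and to obtain $W^F_{\mathrm{loc}}(x,f)$ as the fixed point of a \emph{uniform contraction}. You correctly identify the obstacle — without $C^{1+\alpha}$ regularity or uniform domination there is no modulus of continuity for $Df$ to absorb the nonlinear error — and then assert that the window-averaged, scale-$t$ domination built into the resonance blocks "replaces the H\"older modulus and lets the fixed-point argument close." That step is not justified, and I do not believe it is true as stated. Contraction of the graph transform in the $C^1$ topology genuinely requires a Lipschitz/H\"older modulus for $Df$; the averaged domination in Corollary~\ref{cor:lambda-bl} and Lemma~\ref{lem:averf} only controls the \emph{cone width} $\mathcal{C}^F_{t^2\eta_2^k\theta_1}$ along a backward orbit segment and the \emph{size} and \emph{one-sided contraction rate} of pullback disks. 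It gives no control on the difference of derivatives of two nearby graphs, which is what a $C^1$-contraction needs. Even a $C^0$ graph transform would only produce a Lipschitz graph, not the $C^1$ embedded disk required by item~\eqref{sidf}. Indeed, the whole design of Section~\ref{sec:block} and Propositions~\ref{pro:averf}--\ref{pro:avere} is to \emph{avoid} a fixed-point argument.

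What the paper actually does in place of your second paragraph: for each $x\in\Lambda_t^\varepsilon$ and each resonance time $n_j$, Proposition~\ref{pro:averf} pulls a fixed $C^1$ disk $D$ tangent to $\mathcal{C}^F_{\theta_1}$ backward from $g^{n_{\ell_j}}(x_0)$ through a window with $n_{\ell_j}-n_j\ge j$, using the averaged domination (Lemma~\ref{lem:averf}) only to show the pullback stays in a cone of bounded width and the co-norm estimates to show the pullback has radius at least $r\,t^{-1}$ and contracts at rate $e^{\chi_F^+ + 2\varepsilon}$ over the window. The resulting family $\{D_j\}$ is uniformly bounded and $C^1$-equicontinuous (this uses the bounded cone width and the \emph{uniform continuity of the continuous bundle $F$}, not any H\"older modulus of $Df$), so Arzel\`a--Ascoli gives a $C^1$ limit disk $D(x)$; the contraction estimates pass to the limit because the windows grow. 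This compactness argument is what circumvents the missing H\"older regularity, and it is the heart of the theorem. Your proposal replaces it with a fixed-point argument that, as far as I can see, cannot be made to contract in $C^1$ with the tools available. The remaining parts of your proposal — the temperedness of $T$ via a Birkhoff-type lemma (the paper uses Lemma~\ref{lem:temp}, an application of \cite[Lemma 3.4.3]{Ar98}), the injectivity of the immersion via Poincar\'e recurrence, and the two inclusions in~\eqref{charc} via a transverse-intersection / cone dichotomy argument (Proposition~\ref{pro:forstableff} in the paper, which also has to be run simultaneously for a sequence $\varepsilon_j\to 0$) — are on the right track, but they all rest on the existence and size estimates for $W^F_{\mathrm{loc}}$, which your construction has not actually delivered.
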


This result refines our previous work \cite{CMZ25} in two ways: first, it recovers the sub-exponential decay of the rate function \(A(x)\) and the size \(\delta(x)\) along the orbit, thereby answering affirmatively the question raised in \cite[Page 8]{CMZ25}; second, it yields the global characterization \eqref{charc} of the stable manifold.

Likewise, under the assumption 
$
\chi_E^-(\mu,f)>\max\{0,\chi_F^+(\mu,f)\},
$
we can also establish the similar result on unstable manifolds tangent to $E$. 
Furthermore, property \eqref{charc} enables a simpler derivation of multi-level (un)stable manifolds, streamlining the approach taken in \cite[Theorem 3.3]{CMZ25}. 

\subsection{Shadowing lemmas  and horseshoe's approximation}

Given a subset $Q\subset M$ and a constant $\beta>0$, a sequence $\{(x_k,n_k)\}_{k\in\ZZ}$ is called a {\em $\beta$-pseudo-orbit for $f$ in $Q$} if for all $k \in \mathbb{Z}$,
$$
x_k, f^{n_k}(x_k)\in Q, \quad \textrm{and}~~ d(f^{n_k}x_k, x_{k+1})<\beta.
$$
For constants $\lambda>0$ and $\eta>0$, an orbit $\cO(z)$ is said to {\em $(\eta,\lambda)$-shadow }the pseudo-orbit $\{(x_k,n_k)\}_{k\in \ZZ}$ if   for all $k \in \mathbb{Z}$ and all $0 \le j \le n_k$,
$$
d(f^{s_k+j}(z),f^j(x_k))\le \eta\cdot {\rm e}^{-\lambda\cdot \min\{j,n_k-j\}},
$$
where 
$$
\displaystyle
s_k=
\left\{
\begin{array}{ll}
  \sum_{i=0}^{k-1}n_i, &\quad \textrm{if}~~k>0, \\[2ex]
 \quad 0, &\quad \textrm{if}~~k=0,\\[2ex]
  -\sum_{i=k}^{-1}n_i, & \quad \textrm{if}~~k<0.
\end{array}
\right.
$$

\begin{theoremalph}\label{main:shadowing}
Let $f\in {\rm Diff}^1(M)$ preserving an ergodic measure $\mu$. Assume that $\U$ is an open neighborhood of ${\rm supp}(\mu)$, for which there exists a continuous invariant splitting $T_{\U}M=E\oplus F$ satisfying $\chi_E^{-}(\mu,f)>0>\chi_F^{+}(\mu,f)$. Then there exists an increasing sequence of subsets $\{{Q}_t\}_{t\ge 1}$ such that $\mu(\cup_{t\ge 1}Q_t)=1$ and the following property holds:

For every $t\ge 1$, there exist $\widehat{\beta}=\widehat{\beta}(t)>0$, $\widehat{C}=\widehat{
C}(t)>0$, and $\widehat{N}=\widehat{N}(t)\in \NN$ such that for every $0<\beta<\widehat{\beta}$, if $\{(x_k,n_k)\}_{k\in \ZZ}$ is a $\beta$-pseudo-orbit in $Q_t$ with $n_k\ge \widehat{N}$ for all $k$, then there exists a unique point $z\in M$ whose orbit $(\widehat{C}\beta,\lambda)$-shadows $\{(x_k,n_k)\}_{k\in\ZZ}$ for some  $\lambda>0$. 

Moreover, if the pseudo-orbit $\{(x_k,n_k)\}_{k\in \ZZ}$ is periodic, then the shadowing point $z$ can be chosen to be a hyperbolic periodic point.
\end{theoremalph}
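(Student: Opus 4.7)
The plan is to implement the classical Katok-style shadowing construction inside each resonance block $\Lambda_t$, with the graph transform carried out at the block scale rather than pointwise. I would define $Q_t \subset \Lambda_t$ as a compact subset on which all the parameters produced by Theorem~\ref{main:stable} and its unstable counterpart are uniform: the local stable manifold $W^F_{\loc}(x,f)$ and the local unstable manifold $W^E_{\loc}(x,f)$ have size at least $\delta_t$, the rate constant $A(x)$ is bounded by some $A_t$, and the cone fields around $E$ and $F$ are uniformly transverse. Applying Birkhoff's ergodic theorem to the sequence of resonance times used to build $\Lambda_t$ then supplies a uniform threshold $\wh N = \wh N(t)$ such that for every $x \in Q_t$ and every $n \ge \wh N$ with $f^n(x) \in Q_t$, the average domination $\|Df^n|_{F(x)}\| \cdot m(Df^n|_{E(x)})^{-1}$ is exponentially small in $n$.

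The first key ingredient is a graph transform at the resonance scale. Given $x \in Q_t$ and an admissible unstable disk through $x$ (a $C^1$ graph over $E(x)$ inside a narrow cone around $E$, of size at least $\delta_t/2$), I would show that whenever $y = f^n(x) \in Q_t$ with $n \ge \wh N$, its image under $f^n$ contains an admissible unstable disk through $y$ of the same size with uniform cone control. The argument is the standard one: average domination makes the $E$-cones forward-invariant in an integrated sense, while expansion along $E$ combined with the continuity of the splitting on $\U$ forces the image disk to stretch across a full neighborhood of $y$ in the $E$-direction while remaining exponentially close to $E$. The symmetric statement for stable disks follows by applying the same argument to $f^{-1}$.

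The second ingredient is a local product structure on $Q_t$: for any $y,y' \in Q_t$ with $d(y,y')$ sufficiently small, $W^F_{\loc}(y,f)$ meets $W^E_{\loc}(y',f)$ transversally in a unique point depending continuously on $(y,y')$. With these tools, for a $\beta$-pseudo-orbit $\{(x_k,n_k)\}_{k\in\ZZ}$ in $Q_t$ with $\beta < \wh\beta(t)$ small and $n_k \ge \wh N(t)$, I would construct the shadowing orbit by a nested-intersection scheme. Setting $V^s_k = f^{-s_k}(W^F_{\loc}(x_k,f))$ for $k \ge 0$, the graph transform together with $d(f^{n_k}(x_k),x_{k+1}) < \beta$ gives $V^s_{k+1} \subset V^s_k$, so the intersection $V^s_\infty = \bigcap_{k\ge 0} V^s_k$ is a single local stable disk through a point of $W^F_{\loc}(x_0,f)$. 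A symmetric construction yields a local unstable disk $V^u_\infty$ from the indices $k \le 0$, and the local product structure then provides a unique $z \in V^s_\infty \cap V^u_\infty$. The contraction/expansion estimates inside each window $[s_k,s_{k+1}]$ give the two-sided exponential shadowing inequality with some $\lambda = \lambda(t) > 0$ and $\wh C = \wh C(t)$. Uniqueness of $z$ is immediate from the local product structure; in the periodic case, uniqueness forces $f^{s_P}(z) = z$, and hyperbolicity of $z$ follows from the uniform exponential estimates on $Q_t$ applied at each recurrence.

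The principal obstacle is precisely the absence of pointwise domination: the graph transform cannot be executed one iterate at a time, because at intermediate times the contraction along $F$ may momentarily outweigh the expansion along $E$. I expect the resonance-block construction of Section~\ref{sec:block} to supply exactly the integrated estimates needed, by restricting the admissible times $n_k$ to the resonance times where the cumulative ratios are well-behaved. Verifying that admissible cones remain admissible throughout each full window $[0,n_k]$, and that the constants $\wh C,\wh N,\wh\beta$ depend only on $t$ and not on the particular pseudo-orbit, is the heart of the argument and the place where the novel features of the resonance blocks are essential.
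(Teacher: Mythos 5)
Your overall architecture matches the paper's: resonance blocks supplying uniform estimates, a graph transform that must operate over full resonance windows (not one iterate at a time), a local product structure on the block, and a nested-intersection scheme followed by a uniqueness/periodicity argument. Two genuine gaps remain, however.

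First, the claim that $V^s_{k+1}\subset V^s_k$, where $V^s_k=f^{-s_k}\bigl(W^F_{\loc}(x_k,f)\bigr)$, is false in general. The disk $W^F_{\loc}(x_{k+1},f)$ is centered at $x_{k+1}$, which differs from $f^{n_k}(x_k)$ by the pseudo-orbit error $\beta$; its pullback under $f^{n_k}$ is a disk near $x_k$ but not concentric with $W^F_{\loc}(x_k,f)$, and not tangent to $F(x_k)$ at $x_k$. Pullbacks of the genuine local stable manifolds therefore do not nest, and $\bigcap_k V^s_k$ need not be a stable disk. What actually works, and what the paper does in Propositions~\ref{pro:adF1}--\ref{pro:adE1} and Theorem~\ref{th:shaodowg}, is to start at a far endpoint $x_m$ with $W^F_{\loc}(x_m,g)$, run the graph transform backward along the pseudo-orbit to produce a chain of \emph{admissible} disks $D^F(w_m^k)$ near (not at) $x_k$, intersect with the analogous chain of unstable admissible disks $D^E(w_{-m}^k)$ to get a candidate $z_m^0$, and then prove $z_m^0$ is Cauchy in $m$ using the contraction constant $\alpha<1$. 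Nesting happens at the level of admissible disks for fixed $m$; the shadowing point comes from a limit over $m$.

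Second, the resonance blocks of Section~\ref{sec:block} are constructed for $g=f^N$ and for an ergodic component $\nu$ of $\mu$ with respect to $g$, not directly for $f$ and $\mu$. Your proposal passes over the step of turning an $f$-pseudo-orbit in $Q_t$ into a $g$-pseudo-orbit in a resonance block. The paper handles this by choosing $Q_t=\bigcup_{i=0}^{N-1}f^i(\widehat{\Lambda}_t)$ with $\widehat{\Lambda}_t,f(\widehat{\Lambda}_t),\dots,f^{N-1}(\widehat{\Lambda}_t)$ pairwise disjoint at positive distance $\widehat\beta$; this disjointness forces each jump to land in the same piece, which pins down the residues $p_k\bmod N$, shows $N\mid(n_k-p_k+p_{k-1})$, and lets one apply the shadowing theorem for $g$. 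Without this bookkeeping the reduction to the block-level theorem does not go through and the constants $\widehat\beta,\widehat C,\widehat N$ cannot be defined.

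Everything else in your sketch --- using block recurrence to define $\widehat N$, the local product structure and transversality, the periodic/hyperbolic case via uniqueness plus the uniform exponential estimates from Corollary~\ref{cor:lambda-bl} --- is consistent with the paper's argument.
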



{

Recall that a compact invariant set \(K\) is called a {\em  horseshoe} for a diffeomorphism \(f\) if there exist integers \(m \geq 1\) and \(\ell \geq 2\), and pairwise disjoint closed sets \(K_0, \dots, K_{m-1}\), such that
\[
K = \bigcup_{i=0}^{m-1} K_i,\quad f^m(K_i)=K_i,\quad f(K_i)=K_{i+1 \bmod m},
\]
and \(f^m|_{K_0}\) is topologically conjugate to the full two-sided shift on \(\ell\) symbols.
 Let $h_{\mathrm{top}}(f|_{K})$ denote the topological entropy of $f$ restricted to $K$. For any $\mu\in \cM_f(M)$, we write $h_{\mu}(f)$ for its metric entropy. 

The following result extend the celebrated work \cite{Ka80,KH}.}

\begin{theoremalph}\label{main:horseshoe}
Let \( f \in \mathrm{Diff}^1(M) \)  preserving an ergodic measure $\mu$. Suppose there exists an open neighborhood \( \mathcal{U} \) of \( \mathrm{supp}(\mu) \) admitting a continuous \( Df \)-invariant splitting:
$
T_{\mathcal{U}} M = E \oplus F.
$
If 
$$
h_\mu(f) > 0,\quad \text{and}\quad
\chi_E^{-}(\mu,f) > 0 > \chi_F^{+}(\mu,f),
$$
then for every \( \varepsilon > 0 \), there exists a hyperbolic horseshoe \( K_\varepsilon \) with the following properties:
\begin{enumerate}[(1)]
    \item \label{ho1} \( |h_{\mathrm{top}}(f|_{K_{\varepsilon}}) - h_\mu(f)| < \varepsilon \),
    \item \label{ho2} \( d_H(K_\varepsilon, \mathrm{supp}(\mu)) < \varepsilon \),
    \item \label{ho3} For every \( f \)-invariant measure \( \nu \) supported on \( K_\varepsilon \), we have \( D(\mu, \nu) < \varepsilon \),
    \item \label{ho4} There exists \( m \in \mathbb{N}_+ \) such that for every \( x \in K_\varepsilon \),
    \begin{align*}
        \|Df^m_x(u)\| &\geq {\rm e}^{(\chi_E^-(\mu,f) - \varepsilon)m} \|u\|, \quad \forall u \in E(x), \\
        \|Df^m_x(v)\| &\leq {\rm e}^{(\chi_F^+(\mu,f) + \varepsilon)m} \|v\|, \quad \forall v \in F(x).
    \end{align*}
\end{enumerate}
\end{theoremalph}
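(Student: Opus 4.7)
The plan is to adapt Katok's classical horseshoe construction to the $C^1$+continuous splitting setting, using the resonance blocks $\{Q_t\}$ and the shadowing lemma (Theorem~\ref{main:shadowing}) as the main analytic engines. Fix $\varepsilon>0$. First I would choose the block index $t$ large enough so that $\mu(Q_t)>1-\varepsilon'$ for a small auxiliary $\varepsilon'$, the shadowing constants $\widehat\beta(t),\widehat C(t),\widehat N(t)$ are fixed, and on $Q_t$ the finite-time Lyapunov rates along $E$ and $F$ are uniformly close to $\chi_E^{-}(\mu,f)$ and $\chi_F^{+}(\mu,f)$. Covering $Q_t$ by finitely many open balls of radius $\widehat\beta/4$, I fix one such ball $B$ so that $A:=Q_t\cap B$ still has positive $\mu$-measure. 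By the pointwise ergodic theorem applied to $\chi_A\cdot(\chi_A\circ f^n)$, there are arbitrarily large $n\ge \widehat N(t)$ for which $\mu(A\cap f^{-n}(A))\ge\mu(A)^2/2$.

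For such an $n$, I would invoke the Brin--Katok entropy formula together with Birkhoff's ergodic theorem to produce a subset $S_n\subset A\cap f^{-n}(A)$ of cardinality $N_n\ge \exp\!\bigl(n(h_\mu(f)-\varepsilon/2)\bigr)$ which is $(n,\gamma)$-separated for a suitable $\gamma>0$ and whose elements $x$ have empirical measure $\tfrac{1}{n}\sum_{i=0}^{n-1}\delta_{f^i(x)}$ within $D$-distance $\varepsilon/2$ of $\mu$. Any bi-infinite word $\omega\in\{1,\dots,N_n\}^{\mathbb Z}$ then produces a $\widehat\beta$-pseudo-orbit $\{(x_{\omega_k},n)\}_{k\in\mathbb Z}$ in $Q_t$, since $f^n(x_{\omega_k})$ and $x_{\omega_{k+1}}$ both lie in the single ball $B$ of radius $\widehat\beta/4$.

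Theorem~\ref{main:shadowing} then supplies a unique shadowing point $z_\omega\in M$ for every such $\omega$, with periodic $\omega$ yielding hyperbolic periodic orbits. Define $K_\varepsilon$ to be the closure of $\{z_\omega:\omega\text{ periodic}\}$, equivalently the image of $\{1,\dots,N_n\}^{\mathbb Z}$ under $\omega\mapsto z_\omega$. This map is injective by the combination of $(n,\gamma)$-separation with uniqueness of shadowing, continuous via the exponential decay in the shadowing estimate, and conjugates the shift $\sigma$ to $f^{n}\!\mid_{K_\varepsilon}$, so $K_\varepsilon$ is a horseshoe in the sense of the statement. Property~(1) follows from $h_{\mathrm{top}}(f\mid_{K_\varepsilon})=\tfrac{1}{n}\log N_n\ge h_\mu(f)-\varepsilon/2$ combined with the upper semicontinuity of entropy on uniformly hyperbolic sets; property~(2) is immediate from $d(z_\omega,x_{\omega_0})\le \widehat C\widehat\beta$ together with $Q_t\subset\mathrm{supp}(\mu)$; property~(3) follows because every $f$-invariant measure on $K_\varepsilon$ is a weak-$*$ accumulation of empirical averages along concatenations of the chosen segments, each close to $\mu$; and property~(4) is inherited from the uniform finite-time Lyapunov rates on $Q_t$ by choosing $m$ to be a sufficiently large multiple of $n$, which absorbs the bounded distortion produced by the $\widehat C\widehat\beta$-displacement of shadowing orbits from the pseudo-orbit anchors.

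The step I expect to be the main obstacle is the construction of $S_n$: simultaneously securing the entropy lower bound, the return condition $f^n(S_n)\subset B$, and the empirical-measure closeness, all inside the single resonance block $Q_t$. Brin--Katok provides the entropy bound and Birkhoff delivers the empirical condition on a full-measure subset; the return condition is enforced either by restricting to the density-one set of $n$ with $\mu(A\cap f^{-n}(A))$ bounded below, or, if convenient, by a uniformly bounded Poincar\'e-type adjustment of the return time. It is precisely the uniform shadowing and uniform Lyapunov behaviour on $Q_t$ afforded by the new resonance-block framework that makes this delicate simultaneous choice feasible in the absence of any dominated splitting.
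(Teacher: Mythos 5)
Your proposal follows the same Katok-style blueprint as the paper: use the resonance blocks $Q_t$ together with the shadowing lemma (Theorem~\ref{main:shadowing}), produce a separated set of exponential cardinality whose members return to a small ball inside $Q_t$, build pseudo-orbits from bi-infinite concatenations, and shadow them to get a symbolic horseshoe. The variant you use to fix the return time (a fixed $n$ with $\mu(A\cap f^{-n}A)\ge\mu(A)^2/2$ via the mean ergodic theorem) is a clean alternative to the paper's flexible window $[n,(1+\varepsilon)n]$ followed by a pigeonhole over return times and partition cells, and it would work. However, there are two genuine gaps.

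First, property~(2) is a two-sided Hausdorff estimate, and your argument only gives one direction. From $d(z_\omega,x_{\omega_0})\le\widehat{C}\widehat{\beta}$ and $Q_t\subset\mathrm{supp}(\mu)$ you get $K_\varepsilon\subset B(\mathrm{supp}(\mu),\varepsilon)$, but you have no mechanism to show $\mathrm{supp}(\mu)\subset B(K_\varepsilon,\varepsilon)$. The paper builds this in by requiring every element of its separated set to have an orbit segment $\{f^k(x)\}_{1\le k\le n}$ that is $\varepsilon/2$-dense in $\mathrm{supp}(\mu)$ (part of the definition of $\Omega_{t,n}$), which is a full-measure condition for $n$ large by ergodicity. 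You need to impose the same condition on $S_n$; Birkhoff as you invoke it controls empirical averages of finitely many test functions, not denseness of the orbit.

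Second, property~(1) requires $|h_{\mathrm{top}}(f|_{K_\varepsilon})-h_\mu(f)|<\varepsilon$, and your lower bound $\tfrac{1}{n}\log N_n\ge h_\mu(f)-\varepsilon/2$ is only half of it. Appealing to ``upper semicontinuity of entropy on uniformly hyperbolic sets'' does not bound $h_{\mathrm{top}}(f|_{K_\varepsilon})$ from above by $h_\mu(f)+\varepsilon$: the horseshoe you build could in principle carry more entropy than $\mu$ without violating any semicontinuity principle. The paper handles this by noting that one may always \emph{truncate} the symbol alphabet $F_m\cap P$ so that its cardinality lies in the window $[\,e^{(h_\mu(f)-3\varepsilon)n},\,e^{(h_\mu(f)+3\varepsilon)n}\,]$ (the gap between consecutive exponentials exceeds~$1$), and then $h_{\mathrm{top}}(f|_{K_\varepsilon})=\tfrac{1}{m}\log\#(F_m\cap P)$ is pinned down from both sides. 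You should insert the same truncation step. With these two fixes (and making the argument for~(4) precise by exploiting $N\mid m$ and Corollary~\ref{cor:lambda-bl} on $\Lambda_t$ plus a bounded-distortion perturbation from the anchor orbit), the proof closes.
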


As a consequence of Theorem \ref{main:horseshoe}, we obtain the next result
\begin{corollaryalph}\label{Cor periodic}
Let $f\in {\rm Diff}^1(M)$  preserving an ergodic measure $\mu$. Assume that $\U$ is an open neighborhood of ~${\rm supp}(\mu)$, for which there exists a continuous invariant splitting $T_{\U}M=E\oplus F$. If $h_{\mu}(f)>0$ and $\chi_E^{-}(\mu,f)>0>\chi_F^{+}(\mu,f)$, then
$$
\limsup_{n\to \infty}\frac{1}{n}\log \#P_n(f)\ge h_{\mu}(f),
$$
where $P_n(f)$ denotes the set of $n$-periodic points. 
\end{corollaryalph}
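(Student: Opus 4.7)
The plan is to reduce the statement directly to Theorem~\ref{main:horseshoe} and then exploit the symbolic description of a horseshoe to count periodic orbits from below. Fix \(\varepsilon>0\). By Theorem~\ref{main:horseshoe} applied to this \(\varepsilon\), there is a hyperbolic horseshoe \(K_{\varepsilon}\subset M\) with
\[
h_{\mathrm{top}}(f|_{K_{\varepsilon}})\;>\;h_{\mu}(f)-\varepsilon .
\]
By definition of a horseshoe there exist integers \(m=m(\varepsilon)\ge 1\) and \(\ell=\ell(\varepsilon)\ge 2\) and disjoint compact pieces \(K_0,\dots,K_{m-1}\) with \(K_{\varepsilon}=\bigcup_i K_i\), \(f(K_i)=K_{i+1\bmod m}\), and \(f^{m}|_{K_0}\) topologically conjugate to the full two-sided shift \(\sigma\) on \(\ell\) symbols. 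Since topological entropy is a conjugacy invariant and the entropy of \(\sigma\) is \(\log\ell\), one has \(h_{\mathrm{top}}(f|_{K_{\varepsilon}})=\tfrac{1}{m}\log\ell\).

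Next I would count periodic points via the conjugacy. The full shift on \(\ell\) symbols admits exactly \(\ell^{k}\) points fixed by \(\sigma^{k}\) (one for each periodic word of period dividing \(k\)). Transporting this count through the conjugacy gives at least \(\ell^{k}\) fixed points of \((f^{m})^{k}=f^{mk}\) inside \(K_{0}\subset M\); in particular every such point is a periodic point of \(f\) of period dividing \(mk\). Hence
\[
\#P_{mk}(f)\;\ge\;\ell^{k}\qquad \text{for every }k\ge 1.
\]

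Taking logarithms, dividing by \(mk\), and passing to the limit along the subsequence \(n=mk\) yields
\[
\limsup_{n\to\infty}\frac{1}{n}\log\#P_{n}(f)\;\ge\;\limsup_{k\to\infty}\frac{1}{mk}\log\#P_{mk}(f)\;\ge\;\frac{\log\ell}{m}\;=\;h_{\mathrm{top}}(f|_{K_{\varepsilon}})\;>\;h_{\mu}(f)-\varepsilon .
\]
Since \(\varepsilon>0\) was arbitrary, the desired inequality follows.

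There is no serious obstacle here: the genuine work is encapsulated in Theorem~\ref{main:horseshoe}, which supplies the horseshoe with near-optimal entropy; the remainder is the standard symbolic counting argument. The only point worth verifying with care is that the periodic points produced through the symbolic conjugacy are honest periodic points of \(f\) on \(M\) with the claimed period, which is immediate from \(f^{mk}|_{K_{0}}\) being conjugate to \(\sigma^{k}\).
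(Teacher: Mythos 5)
Your proof is correct and is exactly the intended (and only natural) route: the paper states Corollary~\ref{Cor periodic} as an immediate consequence of Theorem~\ref{main:horseshoe} and does not spell out the symbolic counting, which you have carried out cleanly. The two verifications worth making explicit — that $h_{\mathrm{top}}(f|_{K_\varepsilon})=\tfrac{1}{m}\log\ell$ (using $h_{\mathrm{top}}(f^m|_{K_\varepsilon})=m\,h_{\mathrm{top}}(f|_{K_\varepsilon})$ and that the $m$ pieces $K_i$ are $f^m$-conjugate to one another) and that the conjugacy carries the $\ell^k$ fixed points of $\sigma^k$ bijectively onto fixed points of $f^{mk}$ in $K_0$ — both hold, so the argument is complete.
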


\subsection{Extended continuity of Oseledets splitting}

Let \(f\in\mathrm{Diff}^1(M)\) preserving an ergodic measure \(\mu\).  
The Oseledets theorem \cite{O68} provides an \(f\)-invariant Borel set \(\Gamma\subset M\) of full \(\mu\)-measure, together with a measurable \(Df\)-invariant splitting
$
T_{\Gamma}M=E_1\oplus\cdots\oplus E_{\ell},
$
such that for every \(x\in\Gamma\), every \(1\le i\le \ell\) and every non-zero vector \(v\in E_i(x)\),
\[
\lim_{n\to\pm\infty}\frac1n\log\|Df^n_x(v)\|=\chi_i(\mu).
\]
The constants \(\chi_1(\mu)>\cdots>\chi_{\ell}(\mu)\) are the {\em Lyapunov exponents\/} of \(\mu\), the decomposition \(T_{\Gamma}M=E_1\oplus\cdots\oplus E_{\ell}\) is the {\em Oseledets splitting}, and \(\Gamma\) is called the {\em regular set\/} of \(\mu\). A measure \(\mu\) is called {\em hyperbolic} if all of its Lyapunov exponents are non-zero.

In the next two theorems we assume that the Oseledets splitting \(T_{\Gamma}M=E_1\oplus\cdots\oplus E_{\ell}\) extends to a continuous \(Df\)-invariant splitting on a neighbourhood of \(\overline{\Gamma}\).   Under this assumption we establish approximation of the Lyapunov exponents by those of periodic orbits and by horseshoes.
\begin{theoremalph}\label{periodic approximation}
Let $f\in {\rm Diff}^1(M)$  preserving an ergodic hyperbolic measure $\mu$. Assume that the Oseledets splitting $T_{\Gamma}M=E_1\oplus \cdots \oplus E_{\ell}$  admits a continuous \(Df\)-invariant extension over a neighborhood $\mathcal{U}$ of  ~$\overline{\Gamma}$. Then, for any $\varepsilon>0$, there exists a periodic point $p\in M$ such that 
$$
|\lambda_i(\mu)-\lambda_i(\mu_p)|<\varepsilon,\quad \forall 1\le i \le d,
$$
where $\lambda_1(\mu)\ge \cdots \ge \lambda_d(\mu)$ are Lyapunov exponents of $
\mu$ counted with multiplicities and $\lambda_i(\mu_p)$ those of   the orbit measure of $p$.
\end{theoremalph}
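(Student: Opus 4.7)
The plan is to realize the periodic point $p$ via the closing part of Theorem~\ref{main:shadowing} applied to a carefully chosen long orbit segment of $\mu$, and then to transfer the Oseledets growth rates on each Oseledets subspace $E_i$ from that segment to $p$ by exploiting the continuous extension of the splitting. Since $\mu$ is hyperbolic, grouping the Oseledets subspaces by sign of exponent produces a continuous $Df$-invariant splitting $T_{\mathcal{U}}M=E\oplus F$ on $\mathcal{U}$ with $\chi_E^-(\mu,f)>0>\chi_F^+(\mu,f)$, so Theorem~\ref{main:shadowing} applies and yields resonance blocks $\{Q_t\}_{t\ge 1}$ of full measure. Fix $\varepsilon>0$ and $t$ with $\mu(Q_t)>0$. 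By Oseledets, Birkhoff and Poincar\'e recurrence inside $Q_t$, one can select a Lyapunov-regular $x\in Q_t$ together with an integer $n\ge\widehat{N}(t)$ satisfying $f^n(x)\in Q_t$, $d(f^n(x),x)<\beta$ (with $\beta>0$ to be fixed later), and
\[
\bigl|\tfrac{1}{n}\log\|Df^n|_{E_i(x)}\|-\chi_i(\mu)\bigr|<\tfrac{\varepsilon}{4},\quad
\bigl|\tfrac{1}{n}\log m(Df^n|_{E_i(x)})-\chi_i(\mu)\bigr|<\tfrac{\varepsilon}{4}
\]
for every $1\le i\le\ell$. Regarding the loop $\{(x,n)\}$ as a $\beta$-pseudo-orbit of period one, the closing lemma delivers a hyperbolic periodic point $p$ of period $n$ whose orbit $(\widehat{C}\beta,\lambda)$-shadows $(x,f(x),\ldots,f^{n-1}(x))$.

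The continuity and $Df$-invariance of the extended Oseledets splitting on $\mathcal{U}$ force $Df^n_p$ to preserve the decomposition $T_pM=E_1(p)\oplus\cdots\oplus E_\ell(p)$. Consequently the Lyapunov spectrum of $\mu_p$, counted with multiplicity, is the disjoint union over $i$ of the normalized logarithms of the eigenvalue moduli of $Df^n_p|_{E_i(p)}$, and each such exponent lies between $\tfrac{1}{n}\log m(Df^n_p|_{E_i(p)})$ and $\tfrac{1}{n}\log\|Df^n_p|_{E_i(p)}\|$. The theorem therefore reduces to proving, for each $i$,
\[
\bigl|\tfrac{1}{n}\log\|Df^n_p|_{E_i(p)}\|-\tfrac{1}{n}\log\|Df^n_x|_{E_i(x)}\|\bigr|<\tfrac{\varepsilon}{2}
\]
together with the analogous inequality for the co-norm $m(\cdot)$; combined with the Oseledets estimates above this gives $|\lambda_i(\mu)-\lambda_i(\mu_p)|<\varepsilon$ for every $i$, once $\varepsilon$ is small enough that the Oseledets intervals $[\chi_i-\varepsilon,\chi_i+\varepsilon]$ are pairwise disjoint.

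The last bound is the main technical obstacle. Uniform continuity of $Df$ and of each $E_i$ on $\overline{\mathcal{U}}$ supplies a modulus $\omega$ for the map $y\mapsto Df_y|_{E_i(y)}$ in a local trivialization, and the exponential shadowing $d(f^k(p),f^k(x))\le\widehat{C}\beta\,{\rm e}^{-\lambda\min\{k,n-k\}}$ produces a summable, $\beta$-controlled perturbation of the cocycle factors:
\[
\sum_{k=0}^{n-1}\omega\bigl(\widehat{C}\beta\,{\rm e}^{-\lambda\min\{k,n-k\}}\bigr)\longrightarrow 0
\]
as $\beta\to 0$, uniformly in $n$. The delicate point is to turn this pointwise closeness of factors into a multiplicative control of the $n$-fold product, since such perturbations can in principle amplify catastrophically. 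The remedy is the weak scale-$t$ average domination built into the resonance block $Q_t$: along any returning orbit segment, all admissible frames inside $E_i$ enjoy uniformly comparable product norms at scale $n$, and this coherence allows the summable perturbation to be absorbed on each $E_i$. Executing the product comparison rigorously and tuning $\beta$ so that the resulting multiplicative discrepancy is less than $\varepsilon/2$ is the only substantive step; the rest is bookkeeping.
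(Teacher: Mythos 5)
Your overall plan (close a long returning segment into a periodic orbit, then compare the restricted cocycle on each Oseledets subspace $E_i$) matches the paper's strategy, but there is a genuine gap in the crucial transfer step, and the remedy you invoke does not actually exist with the tools you have in hand. The resonance blocks $Q_t$ delivered by Theorem~\ref{main:shadowing} are built from the \emph{coarse} splitting $T_{\U}M=E\oplus F$: membership in $Q_t$ controls only $\prod m(Df|_E)$ and $\prod\|Df|_F\|$ along the orbit, and gives no information whatsoever about the intermediate-time products $\prod_{i=0}^{k-1}\|Df|_{E_j(f^ix)}\|$ or $\prod_{i=0}^{k-1} m(Df|_{E_j(f^ix)})$ on the \emph{individual} Oseledets subspaces $E_j\subset E$ or $E_j\subset F$. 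So the sentence claiming that ``weak scale-$t$ average domination built into $Q_t$'' makes ``all admissible frames inside $E_i$ enjoy uniformly comparable product norms at scale $n$'' is unsubstantiated and, in fact, false as stated: a point can belong to $Q_t$ and still have wildly fluctuating one-step growth on an individual $E_j$. This is precisely the obstruction the paper confronts head-on.

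The paper resolves it by introducing a \emph{new} family of resonance blocks $\Lambda_t(\varepsilon,x,N)$ at the start of Section~7, defined by requiring two-sided control of $\prod\|Df^N|_{E_j}\|$ and $\prod m(Df^N|_{E_j})$ \emph{for every $j$ simultaneously} and at every intermediate time. The analogue of Lemma~\ref{pro:fulles} (Proposition~\ref{Liaopesin}) then furnishes, for any $x$ in such a block and any $k$, the product estimates $T(x)^{-1}e^{(\chi_j-\varepsilon)kN}\le\prod_{i=0}^{k-1} m(Dg|_{E_j(g^ix)})\le\prod_{i=0}^{k-1}\|Dg|_{E_j(g^ix)}\|\le T(x)\,e^{(\chi_j+\varepsilon)kN}$. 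With these in hand the paper avoids your perturbation-amplification issue entirely: rather than comparing the $n$-step cocycles $\|Df^n_p|_{E_j(p)}\|$ and $\|Df^n_x|_{E_j(x)}\|$ directly, it bounds $\|Dg^n|_{E_j(p)}\|\le\prod_{i=0}^{n-1}\|Dg|_{E_j(g^ip)}\|$ by submultiplicativity, compares each one-step factor to $\|Dg|_{E_j(g^ix)}\|$ (off by at most $e^{\varepsilon N}$ by uniform continuity and the closing estimate $d(g^ip,g^ix)\le C_1\beta<r_0$), and then applies the block product estimate. The lower bound on $m(Dg^n|_{E_j(p)})$ is handled symmetrically. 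Note also a second, smaller inaccuracy in your sketch: the Oseledets bound you quote concerns $\|Df^n|_{E_i(x)}\|$, but what must actually be controlled is the product of one-step norms $\prod_{i=0}^{n-1}\|Df|_{E_i(f^ix)}\|$, and these differ — submultiplicativity only gives an inequality, and controlling the product requires either the new block structure or passing to a sufficiently high iterate $g=f^N$ (as the paper does) so that the Birkhoff averages of $\log\|Df^N|_{E_j}\|$ approach $N\chi_j$. Without the finer blocks, the argument cannot be closed.
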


\begin{theoremalph}\label{horseshoe approximation}
	Let $f\in {\rm Diff}^1(M)$  preserving an ergodic hyperbolic measure $\mu$. Assume that the Oseledets splitting $T_{\Gamma}M=E_1\oplus \cdots \oplus E_{\ell}$  admits a continuous invariant extension over a neighborhood $\mathcal{U}$ of ~$\overline{\Gamma}$. If $h_{\mu}(f)>0$, then for every $\varepsilon>0$, there exists a hyperbolic horseshoe $K_{\varepsilon}$ with the following properties:
	\begin{enumerate}[(1)]
		\item $|h_{top}(f|_{K_{\varepsilon}})-h_{\mu}(f)|<\varepsilon$.
		\item $d_H(K_{\varepsilon}, {\rm supp}(\mu))<\varepsilon$.
		\item for every $f$-invariant measure $\nu$ supported on $K_{\varepsilon}$, we have $D(\mu,\nu)<\varepsilon$.
		\item there exists $m\in \NN$ such that for every $x\in K_{\varepsilon}$ and   $1\le j\le \ell,$
		$$
	{\rm e}^{(\chi_j(\mu)-\varepsilon)m}\|u\|\le 	\|Df^m_x(u)\|\le {\rm e}^{(\chi_j(\mu)+\varepsilon)m}\|u\|,\quad \forall u\in E_j(x).
		$$
	\end{enumerate}
\end{theoremalph}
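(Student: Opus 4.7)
The plan is to execute the Katok-style construction of Theorem \ref{main:horseshoe} on a refined resonance block that witnesses expansion and contraction rates on every individual Oseledets sub-bundle $E_j$ simultaneously, rather than merely on the two gross hyperbolic summands $E^s=E_1\oplus\cdots\oplus E_k$ and $E^u=E_{k+1}\oplus\cdots\oplus E_\ell$ (where $k$ is the index with $\chi_k(\mu)>0>\chi_{k+1}(\mu)$, which exists because $\mu$ is hyperbolic and $h_\mu(f)>0$). The essential extra input over Theorem \ref{main:horseshoe} is the assumed continuity of the full Oseledets splitting on a neighborhood of $\overline{\Gamma}$: this turns each of the maps $x\mapsto\log\|Df_x|_{E_j(x)}\|$ and $x\mapsto\log m(Df_x|_{E_j(x)})$ into a continuous real-valued function, to which Birkhoff's theorem applies and provides uniform convergence on subsets of large $\mu$-measure.

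First I would refine the resonance blocks $\{\Lambda_t\}_{t\ge 1}$ of Section \ref{sec:block} to a subfamily $\{\Lambda_t^\flat\}$ on which, at the scale-$t$ recurrence times $n$, the multi-bundle estimates
\[
e^{(\chi_j(\mu)-\varepsilon)n}\|u\|\;\le\;\|Df^n_x(u)\|\;\le\;e^{(\chi_j(\mu)+\varepsilon)n}\|u\|,\qquad\forall u\in E_j(x),
\]
hold simultaneously for all $1\le j\le \ell$ and all $x\in \Lambda_t^\flat$. These are enforced by intersecting the original resonance selection with the level sets on which the $2\ell$ Birkhoff averages of the continuous cocycles above lie within $\varepsilon$ of their respective limits $\chi_j(\mu)$. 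Since only finitely many continuous functions are involved and $\mu$ is ergodic, the intersection retains positive $\mu$-measure for $t$ large; the scale-dependent average domination between each consecutive pair $E_1\oplus\cdots\oplus E_j$ and $E_{j+1}\oplus\cdots\oplus E_\ell$ persists; and the shadowing machinery of Theorem \ref{main:shadowing} remains applicable.

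Second, I would replay the horseshoe production of Theorem \ref{main:horseshoe}: pick finitely many $\mu$-generic points in $\Lambda_t^\flat$ whose empirical measures over a common window of length $n$ are $\varepsilon$-close to $\mu$ in the metric $D$ and whose orbit segments $\varepsilon$-approximate $\mathrm{supp}(\mu)$ in Hausdorff distance, concatenate these into periodic $\beta$-pseudo-orbits, and shadow each by a genuine hyperbolic periodic orbit via Theorem \ref{main:shadowing}. The resulting invariant set $K_\varepsilon$ is the desired horseshoe; properties (1)--(3) follow from the standard Katok argument as in Theorem \ref{main:horseshoe}. For property (4) I would argue that every $x\in K_\varepsilon$ remains within $\widehat{C}\beta$ of the genuine orbit of a vertex in $\Lambda_t^\flat$ throughout a full recurrence window of length $n$; the uniform continuity of each sub-bundle $E_j$ and of $Df$ then transfers the scale-$n$ multi-bundle bounds from $\Lambda_t^\flat$ to $K_\varepsilon$ at the common time $m:=n$, up to absorbing the perturbation into $\varepsilon$.

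The main obstacle is the first step: arranging the scale-$t$ resonance-time selection so that coherent expansion and contraction rates are witnessed simultaneously on every sub-bundle $E_j$, while still preserving the weak average domination that underpins Theorem \ref{main:shadowing}. For a single pair $(E^s,E^u)$ this is precisely what Section \ref{sec:block} accomplishes; the new point is that the gap-based selection must be performed consistently across all $\ell-1$ consecutive exponent pairs. The continuity of the full Oseledets splitting uniformizes the off-diagonal growth estimates and makes the simultaneous selection compatible, but the bookkeeping of resonance times has to be carried out carefully so that $\Lambda_t^\flat$ retains positive $\mu$-measure and so that all arguments from Section \ref{sec:block} onwards carry over verbatim.
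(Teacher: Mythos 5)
Your proposal matches the paper's own proof in all essentials: the paper likewise redefines the resonance quantities $a^1_n, a^2_n$ to take a simultaneous supremum over all $\ell$ Oseledets sub-bundles (yielding blocks $\Lambda_t(\varepsilon,x,N)$ on which Proposition~\ref{Liaopesin} controls every $\|Dg^k|_{E_j}\|$ and $m(Dg^k|_{E_j})$), and then reruns the Katok/shadowing construction of Theorem~\ref{main:horseshoe} verbatim, with item~(4) upgraded by these multi-bundle estimates. The "obstacle" you flag at the end is not a real one: the paper handles the simultaneous selection cleanly by applying the same Lemma~\ref{lem:fpl} machinery to the finitely many sub-additive continuous sequences $\{\log\|Df^n|_{E_j}\|\}$ and $\{-\log m(Df^n|_{E_j})\}$, so the positive-measure and temperedness properties carry over without additional bookkeeping difficulty.
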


\section{Resonance blocks}\label{sec:block}

In this section, we introduce a family of new blocks for diffomorphisms admitting two continuous invariant subbundles on the support of some ergodic measure. This new construction plays a crucial role in proving our main results.  

Let $g\in {\rm Diff}^1(M)$, and $\nu$ be a $g$-ergodic measure with a continuous invariant splitting 
$
T_{{\rm supp}(\nu)}M=E\oplus F.
$
Given $\varepsilon>0$, $x\in {\rm supp}(\nu)$ and $n\in \NN$, define
\begin{align*}
a^1_n(\nu,\varepsilon,x,g)=&\sup_{0\le j \le n}\left\{\prod_{i=n-j}^{n-1}\frac{\|Dg|_{F(g^{i}(x))}\|}{{\rm e}^{\chi_F^+(\nu,g)+\varepsilon}},\quad \prod_{i=n-j}^{n-1}\frac{{\rm e}^{\chi_E^-(\nu,g)-\varepsilon}}{m(Dg|_{E(g^{i}(x))})}\right\}, \\
a^2_n(\nu,\varepsilon,x,g)=&\sup_{j\ge 0}\left\{\prod_{i=n}^{n+j-1}\frac{\|Dg|_{F(g^{i}(x))}\|}{{\rm e}^{\chi_F^+(\nu,g)+\varepsilon}}, \quad \prod_{i=n}^{n+j-1}\frac{{\rm e}^{\chi_E^-(\nu,g)-\varepsilon}}{m(Dg|_{E(g^{i}(x))})}\right\},
\end{align*}
where we set $\prod_{i=n}^{n-1}a_i=1$ for any sequence $\{a_i\}$ of real numbers.
We see from definition that $a^i_n(\nu,\varepsilon,x,g)\ge 1$ for each $1\le i \le 2$.
Define
$$
H_t(\nu,\varepsilon,x,g)=\left\{n\in \NN: a^i_n(\nu,\varepsilon,x,g)\le t, \quad \forall 1\le i \le 2 \right\},\quad t\ge 1.
$$
Let us consider the forward limit set of $x$ with times restricted in $H_t(\nu,\varepsilon,x,g)$, defined as follows
$$
\Lambda_t^{\varepsilon}(\nu,x,g)=\bigcap_{m\ge 1}\overline{\bigcup_{j\ge m}\left\{g^j(x):j\in H_t(\nu,\varepsilon,x,g)\right\}},\quad t\ge 1.
$$

\begin{definition}
We call $\Lambda_{t}^{\varepsilon}(\nu,x,g)$ a {\em resonance block} with respect to $(g,\nu, E\oplus F)$.
\end{definition}

\subsection{Basic properties on resonance blocks}
This subsection gives several basic properties on the resonance blocks that will be used throughout this paper.  Let us fix a $g$-ergodic measure $\nu$ exhibiting continuous splitting $T_{{\rm supp}(\nu)}M=E\oplus F$. Given $\varepsilon>0$, $x_0\in {\rm supp}(\nu)$, we set for simplicity that
\begin{itemize}
\item $\chi_E^-:=\chi_E^-(\nu,g)$,\quad $\chi_F^+:=\chi_F^+(\nu,g)$.
\item $H_t:=H_t(\nu,\varepsilon,x_0,g),\quad \Lambda_t:=\Lambda_t^{\varepsilon}(\nu,x_0,g),\quad \forall t\ge 1$.
\item $a_{n}^i:=a_{n}^i(\nu,\varepsilon,x_0,g),\quad \forall n\in \NN, \quad i=1, 2.$
\end{itemize}

Given a subset $\mathbb{J}\subset \NN$, we define its lower density and upper density as follows:
$$
\underline{{\rm Dens}}(\mathbb{J}):=\liminf_{n\to +\infty}\frac{1}{n}\#(\mathbb{J}\cap \{0,\dots,n-1\}),
$$
$$
\overline{{\rm Dens}}(\mathbb{J}):=\liminf_{n\to +\infty}\frac{1}{n}\#(\mathbb{J}\cap \{0,\dots,n-1\}).
$$
We just write ${\rm Dens}(\mathbb{J})$ when the limit exists.

\begin{lemma}\label{basicblock}
We have the following properties on $\Lambda_t$ and $H_t$, $t\ge 1$: 
\begin{enumerate}[(1)]
\item\label{lambda:incr} $\{\Lambda_t\}_{t\ge 1}$ is an increasing sequence of compact subsets.
\item\label{lambda:mden} If $x_0$ is $\nu$-generic, then 
$
\nu(\Lambda_t)\ge \overline{{\rm Dens}}(H_t)
$
for every $t\ge 1$.
\item\label{lambda:inv} There exists $c_0\ge 1$ such that $g\left(\Lambda_t\right)\subset \Lambda_{c_0t}$ for every $t\ge 1$.
\end{enumerate}
\end{lemma}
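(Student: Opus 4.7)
The plan is to treat the three items separately, proceeding from the easiest to the most delicate.

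Item \eqref{lambda:incr} is essentially a matter of unpacking definitions. Since the constraint $a_n^i\le s$ implies $a_n^i\le t$ whenever $s\le t$, one has $H_s\subset H_t$; the tail sets defining $\Lambda_s$ are therefore contained in those defining $\Lambda_t$, yielding monotonicity. Compactness is automatic, as $\Lambda_t$ is a countable intersection of closed subsets of the compact manifold $M$.

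For item \eqref{lambda:inv}, I would establish a quasi-invariance of the quantities $a_n^i$ under the shift $n\mapsto n+1$. Using that $g$ is $C^1$ on a compact manifold and that the continuous subbundles $E,F$ admit uniform upper and lower bounds on $\|Dg|_F\|$ and $m(Dg|_E)$, a direct index-shift in the defining products yields a universal constant $c_0\ge 1$ (depending only on $g$ and $\varepsilon$) such that $a_{n+1}^i\le c_0\,a_n^i$ for $i=1,2$. Hence $n\in H_t$ implies $n+1\in H_{c_0 t}$. Writing any $y\in\Lambda_t$ as $y=\lim_k g^{j_k}(x_0)$ with $j_k\in H_t$ going to infinity, continuity of $g$ gives $g(y)=\lim_k g^{j_k+1}(x_0)\in\Lambda_{c_0 t}$.

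Item \eqref{lambda:mden} is the main obstacle and relies on $\nu$-genericity of $x_0$. The plan is to transfer a density estimate on $H_t$ into a measure estimate on $\Lambda_t$ through weak-$*$ convergence of the empirical measures $\mu_n:=\frac{1}{n}\sum_{k=0}^{n-1}\delta_{g^k(x_0)}\to\nu$. The crucial observation is that for every open neighborhood $V$ of $\Lambda_t$, only finitely many $g^k(x_0)$ with $k\in H_t$ can lie outside $V$, otherwise a subsequential limit would produce a point of $\Lambda_t\setminus V$. Fixing such a $V$ and choosing an open $V'$ with $\Lambda_t\subset V'\subset\overline{V'}\subset V$, I would bound $\mu_n(V')\ge\frac{1}{n}\#(H_t\cap[0,n-1])-O(1/n)$ and, along a subsequence realizing $\overline{{\rm Dens}}(H_t)$, apply the portmanteau inequality for the closed set $\overline{V'}$ to deduce
\[
\nu(V)\ge \nu(\overline{V'})\ge \limsup_{n}\mu_n(\overline{V'})\ge \limsup_{n}\mu_n(V')\ge \overline{{\rm Dens}}(H_t).
\]
Outer regularity of $\nu$ on the compact set $\Lambda_t$ then concludes. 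The delicate point is precisely this sandwich of $\Lambda_t$ between an open set and a closed set, which makes the portmanteau bound point in the useful direction; items \eqref{lambda:incr} and \eqref{lambda:inv} are largely routine bookkeeping once the definitions are unpacked.
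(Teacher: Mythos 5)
Your proposal is correct, and items~\eqref{lambda:incr} and~\eqref{lambda:inv} follow essentially the same route as the paper: item~\eqref{lambda:incr} is read off the definition (the paper in fact omits the $H_s\subset H_t$ step that you spell out), and item~\eqref{lambda:inv} is established by the same index-shift bound $a_{n+1}^i\le c_0\,a_n^i$ using \eqref{eq:an1}--\eqref{eq:an2} and compactness of $\operatorname{supp}(\nu)$ together with continuity of $E,F$.

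For item~\eqref{lambda:mden}, both arguments hinge on the portmanteau inequality for the weak-$*$ limit $\nu_n\to\nu$, but the bookkeeping differs. The paper works directly with the defining decreasing intersection of closed tail sets $C_m:=\overline{\bigcup_{j\ge m}\{g^j(x_0):j\in H_t\}}$: since $\nu(\Lambda_t)=\lim_m\nu(C_m)$ and each $C_m$ is closed and contains $g^j(x_0)$ for all $j\in H_t\cap[m,\infty)$, one has $\nu(C_m)\ge\limsup_n\nu_n(C_m)\ge\limsup_n\frac1n\#(H_t\cap[m,n-1])=\overline{\mathrm{Dens}}(H_t)$, no auxiliary sets needed. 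You instead sandwich $\Lambda_t$ between an open $V'$ and its closure, prove a compactness lemma (``only finitely many $g^k(x_0)$ with $k\in H_t$ escape any open neighborhood of $\Lambda_t$''), apply portmanteau to $\overline{V'}$, and finish with outer regularity of $\nu$. This is correct and buys nothing new here, but it does require the extra escape-to-infinity observation, which is precisely the content that the paper's tail-set formulation makes automatic. One small presentational slip: the finite-exceptions observation must be applied to $V'$ itself (not just to $V$) before you can bound $\mu_n(V')$; since $V'$ is again an open neighborhood of $\Lambda_t$ this is harmless, but worth stating explicitly.
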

\begin{proof}
Item \eqref{lambda:incr} follows immediately from the definition. To show \eqref{lambda:mden}, let $x_0$ be $\nu$-generic, thus
$$
\nu_n=\frac{1}{n}\sum_{i=0}^{n-1}\delta_{g^ix_0}\to \nu \quad \textrm{as}~n\to +\infty.
$$
Then we conclude from definition that   
\begin{eqnarray*}
\nu(\Lambda_t)&=&\lim_{m\to \infty}\nu\left(\overline{\bigcup_{j\ge m}\left\{g^j(x_0):j\in H_t\right\}}\right)\\
&\ge & \lim_{m\to \infty}\limsup_{n\to \infty}\nu_n\left(\overline{\bigcup_{j\ge m}\left\{g^j(x_0):j\in H_t\right\}}\right)\\
&\ge& \lim_{m\to \infty}\limsup_{n\to \infty}\nu_n\left(\bigcup_{j\ge m}\left\{g^j(x_0):j\in H_t\right\}\right)\\
&=& \lim_{m\to \infty}\limsup_{n\to \infty}\frac{1}{n}\#(H_t\cap [0,n-1])\\
&=&\overline{{\rm Dens}}(H_t).
\end{eqnarray*}

We now prove \eqref{lambda:inv}.
It follows from the definition that
\begin{align}
a_{n+1}^1&=\max\left\{1, \max\left\{\frac{\|Dg|_{F(g^n(x_0))}\|}{{\rm e}^{\chi_F^++\varepsilon}}, \frac{{\rm e}^{\chi_E^--\varepsilon}}{m(Dg|_{E(g^n(x_0))})}\right\}\cdot a_n^1\right\},\label{eq:an1}\\
a_{n}^2&=\max\left\{1, \max\left\{\frac{\|Dg|_{F(g^n(x_0))}\|}{{\rm e}^{\chi_F^++\varepsilon}}, \frac{{\rm e}^{\chi_E^--\varepsilon}}{m(Dg|_{E(g^n(x_0))})}\right\}\cdot a_{n+1}^2\right\}.\label{eq:an2}
\end{align}
Define
$$
c_0=\max_{x\in {\rm supp}(\nu)}\left\{\frac{\|Dg|_{F(x)}\|}{{\rm e}^{\chi_F^++\varepsilon}},\frac{{\rm e}^{\chi_F^++\varepsilon}}{\|Dg|_{F(x)}\|},\frac{m(Dg|_{E(x)})}{{\rm e}^{\chi_E^--\varepsilon}},\frac{{\rm e}^{\chi_E^--\varepsilon}}{m(Dg|_{E(x)})},1\right\}.
$$
Then from \eqref{eq:an1} and \eqref{eq:an2}, and since $a^i_n\ge 1$, we have 
\begin{equation}\label{eq:an+1}
a_{n+1}^i\le c_0a_n^i\quad \forall n\in \NN, \quad i=1,2.
\end{equation}
Now fix $t\ge 1$ and let $x\in \Lambda_t$. By definition, there exists $\{n_k\}\subset H_t$ such that $g^{n_k}(x_0)\to x$ as $k\to +\infty$. Since $a_{n_k}^i \le t$ for $i = 1, 2$ and all $k \in \NN$ by the definition of $H_t$, inequality \eqref{eq:an+1} implies $a_{n_k+1}^i \le c_0 \cdot t$. As $g^{n_k+1}(x_0)\to g(x)$ as $k\to +\infty$, we see by definition that
$g(x)\in \Lambda_{c_0\cdot t}$. Hence, we get
$
g(\Lambda_t)\subset \Lambda_{c_0\cdot t}
$
for every $t\ge 1$.
\end{proof}

As a result of Lemma \ref{basicblock}, we have 
\begin{lemma}\label{pro:blockoff}
Assume $x$ is a $\nu$-generic point such that 
$
\overline{{\rm Dens}}(H_{t_0})>0
$
for some $t_0\ge 1$.
Then 
$$
\lim_{t\to +\infty}\nu\left(\Lambda_t\right)=\nu\left(\bigcup_{t\ge 1}\Lambda_t\right)=1.
$$
\end{lemma}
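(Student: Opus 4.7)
The plan is to deduce the lemma from the three items of Lemma \ref{basicblock} together with the ergodicity of $\nu$. Write $\Lambda_\infty := \bigcup_{t\ge 1}\Lambda_t$. Since $\{\Lambda_t\}_{t\ge 1}$ is an increasing family of compact (hence Borel) sets by Lemma \ref{basicblock}\eqref{lambda:incr}, continuity of $\nu$ along increasing unions gives $\lim_{t\to+\infty}\nu(\Lambda_t)=\nu(\Lambda_\infty)$, which is exactly the first equality in the statement. So the whole content of the lemma reduces to proving $\nu(\Lambda_\infty)=1$.

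First I would check that $\nu(\Lambda_\infty)>0$. Directly from the definition, $H_t\subset H_{t'}$ whenever $t\le t'$, so $\overline{\rm Dens}(H_t)\ge\overline{\rm Dens}(H_{t_0})>0$ for every $t\ge t_0$. Combining this with the $\nu$-genericity of $x_0$ and Lemma \ref{basicblock}\eqref{lambda:mden} gives
\[
\nu(\Lambda_\infty)\ge \nu(\Lambda_{t_0})\ge \overline{\rm Dens}(H_{t_0})>0.
\]

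Next I would upgrade positivity to full measure via an ergodicity argument. By Lemma \ref{basicblock}\eqref{lambda:inv}, $g(\Lambda_t)\subset \Lambda_{c_0 t}\subset \Lambda_\infty$ for every $t\ge 1$, so $g(\Lambda_\infty)\subset \Lambda_\infty$, equivalently $\Lambda_\infty\subset g^{-1}(\Lambda_\infty)$. Because $\nu$ is $g$-invariant, $\nu(g^{-1}(\Lambda_\infty))=\nu(\Lambda_\infty)$, which forces $\nu(g^{-1}(\Lambda_\infty)\setminus \Lambda_\infty)=0$. Hence $\Lambda_\infty$ is $g$-invariant modulo $\nu$-null sets, and ergodicity of $\nu$ yields $\nu(\Lambda_\infty)\in\{0,1\}$. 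Combined with the positivity established above, we conclude $\nu(\Lambda_\infty)=1$.

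I do not foresee a real obstacle here: every step is a direct application of results already recorded in Lemma \ref{basicblock} and of the basic fact that forward-invariant sets of positive measure are full-measure under an ergodic invariant measure. The only subtle point worth verifying is the monotonicity $\overline{\rm Dens}(H_t)\ge\overline{\rm Dens}(H_{t_0})$ for $t\ge t_0$, which is immediate from $H_{t_0}\subset H_t$ and monotonicity of $\limsup$ of normalized cardinalities.
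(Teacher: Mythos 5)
Your proof is correct and follows essentially the same route as the paper: positivity from Lemma \ref{basicblock}\eqref{lambda:mden}, an ergodicity upgrade using the near-invariance supplied by Lemma \ref{basicblock}\eqref{lambda:inv}, and continuity of measure along the increasing union for the first equality. The only cosmetic difference is that you show $\Lambda_\infty$ itself is forward-invariant modulo $\nu$-null sets and invoke ergodicity directly, whereas the paper applies ergodicity to the saturation $\bigcup_{n\ge 1}g^n(\Lambda_{t_0})$ and then observes it is contained in $\Lambda_\infty$; these are equivalent phrasings of the same argument. (The remark about $\overline{\rm Dens}(H_t)\ge\overline{\rm Dens}(H_{t_0})$ is true but unused.)
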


\begin{proof}
 By Item \eqref{lambda:incr} of Lemma \ref{basicblock}, the union $\bigcup_{t\ge 1}\Lambda_t$ is measurable. Since
$
{\rm Dens}(H_{t_0})>0.
$
Item \ref{lambda:mden} of Lemma \ref{basicblock} implies 
$
\nu(\Lambda_{t_0})>0.
$
By the ergodicity of $\nu$, it follows that 
$$
\nu\left(\bigcup_{n\ge 1}g^n(\Lambda_{t_0})\right)=1.
$$
Item \eqref{lambda:inv} of Lemma \ref{basicblock} suggests that there exists $c_0\ge 1$ such that $g^n(\Lambda_{t_0})\subset \Lambda_{c_0^nt_0}$ for all $n\ge 1$. Hence,
$$
\bigcup_{n\ge 1}g^n(\Lambda_{t_0})\subset \bigcup_{t\ge 1}\Lambda_t,
$$
and therefore
$
\nu\left(\bigcup_{t\ge 1}\Lambda_t\right)=1.
$
By the monotonicity of $\Lambda_t$ provided in Item \eqref{lambda:incr} of Lemma \ref{basicblock}), we get
$$
\lim_{t\to +\infty}\nu\left(\Lambda_t\right)=\nu\left(\bigcup_{t\ge 1}\Lambda_t\right)=1.
$$
\end{proof}

Let ${\mathcal{R}}=\bigcup_{t\ge 1}\Lambda_t$, define 
\begin{equation}\label{def:Tx}
T(x)=\inf\left\{t\ge 1: x\in \Lambda_t\right\},\quad x\in \mathcal{R}.
\end{equation}

\begin{lemma}\label{T measurable}
The function $T$ is lower semi-continuous on $\mathcal{R}$ and hence measurable.
\end{lemma}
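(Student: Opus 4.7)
The plan is to verify lower semi-continuity via the standard open super-level-set criterion: I will show that for every $s\in\mathbb{R}$ the set $\{x\in\mathcal{R}:T(x)>s\}$ is open in $\mathcal{R}$, from which Borel measurability is automatic.

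The key inputs will both come from Item~\eqref{lambda:incr} of Lemma~\ref{basicblock}, namely that the family $\{\Lambda_t\}_{t\ge 1}$ is (a) monotone increasing in $t$ and (b) consists of compact subsets of $M$. Combined with the infimum definition \eqref{def:Tx}, these two facts should suffice; no dynamical information beyond Lemma~\ref{basicblock} is required.

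My argument will proceed as follows. First I would fix $x_0\in\mathcal{R}$ with $T(x_0)>s$ and choose an intermediate value $s'\in(s,T(x_0))$; the defining infimum then forces $x_0\notin\Lambda_{s'}$. Second, since $\Lambda_{s'}$ is closed in $M$, the set $U:=\mathcal{R}\setminus\Lambda_{s'}$ is an open neighborhood of $x_0$ inside $\mathcal{R}$. Third, for any $y\in U$ and any $t\le s'$, monotonicity gives $\Lambda_t\subset\Lambda_{s'}$, so $y\notin\Lambda_t$; passing to the infimum over such $t$ yields $T(y)\ge s'>s$, which is exactly what is needed.

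There is essentially no substantive obstacle in this argument: the conclusion is a direct consequence of the compactness and monotonicity already established. The only point that requires a moment of care is the insertion of a strict intermediate value $s'$ between $s$ and $T(x_0)$, which is what allows one to pass from the (possibly non-attained) infimum defining $T(x_0)$ to a concrete closed set $\Lambda_{s'}$ not containing $x_0$, whose complement in $\mathcal{R}$ is then the required open neighborhood on which $T$ stays strictly above $s$.
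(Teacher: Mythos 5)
Your proof is correct, and it takes a genuinely different route from the paper. You verify lower semi-continuity directly via open super-level sets: choose $s'\in(s,T(x_0))$, observe $x_0\notin\Lambda_{s'}$ by the infimum definition, and note that $\mathcal{R}\setminus\Lambda_{s'}$ is an open neighborhood of $x_0$ on which $T\ge s'$ by monotonicity. The only inputs needed are compactness and monotonicity of the family $\{\Lambda_t\}$, exactly as you say (plus the trivial observation that $T\ge 1$ handles $s<1$). The paper instead introduces the auxiliary family $\widetilde{\Lambda}_t=\bigcap_{s>t}\Lambda_s$ and the function $\widetilde{T}(x)=\inf\{t:x\in\widetilde{\Lambda}_t\}$, proves $T=\widetilde{T}$ by a two-sided squeeze, observes that the infimum defining $\widetilde{T}$ is attained, and concludes that the sub-level set $\{T\le t_0\}$ equals the closed set $\widetilde{\Lambda}_{t_0}$. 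Your argument is more economical for the lemma as stated. The paper's heavier construction buys the explicit identity $\{T\le t_0\}=\widetilde{\Lambda}_{t_0}$ and, crucially, the containment $x\in\bigcap_{t>T(x)}\Lambda_t$ (equation \eqref{xlambda}), which it reuses directly in the proofs of Lemma~\ref{lem:temp} and Lemma~\ref{pro:fulles}; with your approach that containment would have to be extracted separately (it is an easy consequence of monotonicity, but the paper packages it inside this proof).
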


\begin{proof}
Define
$$
\widetilde{\Lambda}_t=\bigcap_{s>t}\Lambda_s, \quad t\ge 1.
$$
Then, we know by definition that $\Lambda_t \subset \widetilde{\Lambda}_t$ for all $t \ge 1$, and $\widetilde{\Lambda}_{t_1} \subset \widetilde{\Lambda}_{t_2}$ whenever $t_1 < t_2$. 
Let 
$$
\widetilde{T}(x) = \inf \{ t \ge 1 : x \in \widetilde{\Lambda}_t \}\quad \textrm{for}~ x \in \mathcal{R}.
$$ 
We show that $T(x) = \widetilde{T}(x)$.
On the one hand, by the definition of $T(x)$ and the monotonicity of $\{\Lambda_t\}$, we have $x\in \Lambda_t$ for every $t>T(x)$. Thus, 
\begin{equation}\label{xlambda}
x\in \bigcap_{t>T(x)}\Lambda_t=\widetilde{\Lambda}_{T(x)},
\end{equation}
so $\widetilde{T}(x)\le T(x)$. Conversely, we know by definition that $x\in \widetilde{\Lambda}_t$ for every $t>\widetilde{T}(x)$. It follows that 
\begin{equation}\label{eq:tilde}
x\in \bigcap_{t>\widetilde{T}(x)}\widetilde{\Lambda}_t=\bigcap_{t>\widetilde{T}(x)}\bigcap_{s>t}\Lambda_s=\bigcap_{s>\widetilde{T}(x)}\Lambda_s=\widetilde{\Lambda}_{\widetilde{T}(x)}.
\end{equation}
Therefore, $x \in \Lambda_s$ for every $s > \widetilde{T}(x)$, and thus $T(x)\le s$ for every $s>\widetilde{T}(x)$. Hence, we know $T(x)\le \widetilde{T}(x)$ from the arbitrariness of $s$.
Therefore, $T(x)=\widetilde{T}(x)$. Moreover, \eqref{eq:tilde} also implies that 
$$
T(x)=\widetilde{T}(x)=\min\{t\ge 1: x\in \widetilde{\Lambda}_t\},
$$
so,
$$
\{x\in \mathcal{R}: T(x)\le t_0\}=\widetilde{\Lambda}_{t_0}.
$$
Observe that $\widetilde{\Lambda}_{t_0}$ is a closed subset, $T$ is lower semi-continuous and hence measurable. 
\end{proof}

The following result asserts that $T$ defined in \eqref{def:Tx} is tempered along the orbit for almost every point, which is essential in getting the sub-exponentially property on $\delta(x)$ and $A(x)$ in Theorem \ref{main:stable}.

\begin{lemma}\label{lem:temp}
If $\eta(\mathcal{R})=1$ for a $g$-invariant measure $\eta$, then
$$
\lim_{n\to \pm\infty}\frac{1}{|n|}\log T(g^n(x))=0,\quad \eta-\textrm{a.e.}~x.
$$
\end{lemma}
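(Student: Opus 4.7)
The plan is to identify $\log T$ as a cocycle of bounded $g$-variation and to substitute the positivity $\log T\ge 0$ for the usual integrability hypothesis of the ergodic theorem.

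First I upgrade Item \eqref{lambda:inv} of Lemma \ref{basicblock} to a two-sided statement $g^{\pm 1}(\Lambda_t)\subset \Lambda_{c_0 t}$. The forward half is given. For the backward half I inspect the recursions \eqref{eq:an1}--\eqref{eq:an2}: a short case analysis, parallel to the one producing $a_{n+1}^{i}\le c_0\, a_{n}^{i}$, also yields $a_{n-1}^{i}\le c_0\, a_{n}^{i}$ for $i=1,2$, because the constant $c_0$ was chosen to bound the step factors and their reciprocals simultaneously. Hence if $y=\lim_{k}g^{n_k}(x_0)\in\Lambda_t$ with $n_k\in H_t$, then $n_k-1\in H_{c_0 t}$ and $g^{-1}(y)\in\Lambda_{c_0 t}$. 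Combined with the forward direction, this gives $T(g^{\pm 1}(x))\le c_0\,T(x)$ on $\mathcal{R}$, so the coboundary
\[
\psi(x):=\log T(g(x))-\log T(x)
\]
satisfies $|\psi|\le \log c_0$ and hence lies in $L^{\infty}(\eta)\subset L^{1}(\eta)$.

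Next I apply Birkhoff's ergodic theorem in both time directions. Since the $\sigma$-algebra $\mathcal{I}$ of $g$-invariant sets coincides with that of $g^{-1}$, both averages converge $\eta$-a.e.\ to the same invariant function $L:=\mathbb{E}(\psi\mid\mathcal{I})$. The decisive feature is that the Birkhoff sums of $\psi$ telescope:
\[
\sum_{k=0}^{n-1}\psi(g^{k}x)=\log T(g^{n}x)-\log T(x),\qquad \sum_{k=1}^{n}\psi(g^{-k}x)=\log T(x)-\log T(g^{-n}x).
\]
Dividing by $n$ and using $T(x)<\infty$ $\eta$-a.e.\ (which follows from $\eta(\mathcal{R})=1$), this yields
\[
\frac{1}{n}\log T(g^{n}x)\longrightarrow L(x),\qquad \frac{1}{n}\log T(g^{-n}x)\longrightarrow -L(x)\quad \eta\text{-a.e.}
\]

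To finish I observe that the blocks $\Lambda_t$ are defined only for $t\ge 1$, so by \eqref{def:Tx} we have $T\ge 1$ and hence $\log T\ge 0$. Passing to the limit $n\to +\infty$ in both displayed identities forces $L(x)\ge 0$ and $-L(x)\ge 0$ for $\eta$-a.e.\ $x$, hence $L\equiv 0$ and the lemma follows. The only step requiring genuine work is the backward invariance in the first paragraph; once it is in place, the remainder is a clean application of the ergodic theorem in which the positivity $\log T\ge 0$ elegantly replaces the integrability hypothesis one would otherwise need on $\log T$.
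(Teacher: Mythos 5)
Your proof is correct, and it genuinely departs from the paper's argument in two ways. First, you strengthen Item~\eqref{lambda:inv} of Lemma~\ref{basicblock} to the two-sided statement $g^{\pm1}(\Lambda_t)\subset\Lambda_{c_0t}$; the backward half does follow from the same recursions \eqref{eq:an1}--\eqref{eq:an2}, since $c_0$ was deliberately chosen to bound both $b_n:=\max\{\|Dg|_{F(g^n x_0)}\|/{\rm e}^{\chi_F^++\varepsilon},\ {\rm e}^{\chi_E^--\varepsilon}/m(Dg|_{E(g^n x_0)})\}$ and $1/b_n$: the identity $a_n^1=\max\{1,b_{n-1}a_{n-1}^1\}$ gives $a_{n-1}^1\le c_0 a_n^1$ by the same two-case argument the paper uses in the forward direction, and $a_{n-1}^2=\max\{1,b_{n-1}a_n^2\}\le c_0 a_n^2$ trivially since $a_n^2\ge1$. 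This yields the uniform coboundary bound $|\psi|\le\log c_0$, placing $\psi$ in $L^\infty(\eta)$. The paper, in contrast, only exploits the forward invariance and works with the weaker fact that $\psi^+$ and the analogous backward positive part lie in $L^1$, then closes the argument by citing the tempered-variable lemma \cite[Lemma 3.4.3]{Ar98}. Second, your finish is self-contained: Birkhoff in both time directions, the telescoping of the Birkhoff sums, and the pointwise positivity $\log T\ge 0$ (from $T\ge1$) force the common limit $L=\mathbb{E}(\psi\mid\mathcal{I})$ to vanish. What your route buys is the elimination of the external citation to Arnold's lemma at the modest cost of proving one more inclusion among the resonance blocks; what the paper's route buys is avoiding that extra inclusion, at the cost of invoking a black-box lemma and settling for $L^1$ rather than $L^\infty$ control of the cocycle. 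Both are valid; yours is arguably cleaner and more elementary.
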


\begin{proof}
Let $x\in \mathcal{R}$. By \eqref{xlambda}, we have $x\in \bigcap_{t>T(x)}\Lambda_t$, so in particular $x\in \Lambda_{T(x)+\delta}$ for every $\delta>0$. Hence, there exists a sequence $\{n_k\}_{k\ge 1}\subset H_{T(x)+\delta}$ such that $g^{n_k}(x_0)\to x$ as $k\to +\infty$. It follows that
\begin{equation}\label{ineq:ank}
a_{n_k}^i\le T(x)+\delta,\quad  i=1,2.
\end{equation}
From \eqref{eq:an1}-\eqref{eq:an2}, for each $1\le i \le 2$ we obtain
$$
a_{n_k+1}^i\le c_0 a_{n_k}^i\le c_0\cdot (T(x)+\delta),
$$
so the sequences $\{a_{n_k}^i\}_{k\ge 1}$ and $\{a_{n_k+1}^i\}_{k\ge 1}$ are bounded from above.
Up to considering some subsequence, we may assume that for $i=1,2$, 
$$
\lim_{k\to +\infty}a_{n_k}^i=a^i,\quad \lim_{k\to +\infty}a_{n_k+1}^i=\widetilde{a}^i.
$$
The first convergence together with \eqref{ineq:ank} implies
\begin{equation}\label{ineq:ak}
a^i\le T(x)+\delta, \quad  i=1, 2.
\end{equation}
The second convergence implies that for every large enough $k$, 
$$
a_{n_k+1}^i<\widetilde{a}^i+\delta,\quad i=1,2.
$$
So,  $n_k+1\in H_{\max_{1\le i \le 2}\{\widetilde{a}^i+\delta\}}$ and $g^{n_k+1}(x_0)\to g(x)$ as $k\to +\infty$. Therefore, $g(x)\in \Lambda_{\max_{1\le i \le 2}\widetilde{a}^i+\delta}$, and 
\begin{equation}\label{ineq:tgb}
T(g(x))\le \max_{1\le i \le 2}\widetilde{a}^i+\delta.
\end{equation}
Now let $k\to +\infty$ in \eqref{eq:an1}-\eqref{eq:an2} with $n=n_k$ to obtain
\begin{align*}
\widetilde{a}^1&=\max\left\{1, ~\max\left\{\frac{\|Dg|_{F(x)}\|}{{\rm e}^{\chi_F^++\varepsilon}}, \frac{{\rm e}^{\chi_E^--\varepsilon}}{m(Dg|_{E(x)})}\right\}\cdot a^1\right\}, \\
a^2&=\max\left\{1, ~\max\left\{\frac{\|Dg|_{F(x)}\|}{{\rm e}^{\chi_F^++\varepsilon}}, \frac{{\rm e}^{\chi_E^--\varepsilon}}{m(Dg|_{E(x)})}\right\}\cdot \widetilde{a}^2\right\}.
\end{align*}
Define
$$
b(x)=\max\left\{\frac{{\rm e}^{\chi_E^--\varepsilon}}{m(Dg|_{E(x)})},\quad \frac{\|Dg|_{F(x)}\|}{{\rm e}^{\chi_F^++\varepsilon}}\right\}.
$$
Then from \eqref{ineq:ak} we deduce
$$
\widetilde{a}^1\le \max\{1,b(x)\cdot (T(x)+\delta)\},\quad
\widetilde{a}^2\le b(x)^{-1}\cdot (T(x)+\delta).
$$
Substituting into \eqref{ineq:tgb} gives
$$
T(g(x))\le \max\left\{1,b(x)\cdot (T(x)+\delta),b(x)^{-1}\cdot (T(x)+\delta)\right\}+\delta.
$$
Since $\delta>0$ is arbitrary, it follows that 
$$
T(g(x))\le \max\left\{1, \max\{b(x),b(x)^{-1}\}\cdot T(x)\right\}.
$$
Therefore
$$
\frac{T(g(x))}{T(x)}\le \max \left\{\frac{1}{T(x)}, \max\{b(x),b(x)^{-1}\}\right\}\le \max\{1,b(x),b(x)^{-1}\},
$$
and hence
$$
\left(\log T(g(x))-\log T(x)\right)^+\le |\log b(x)|.
$$
Since $b(x)$ is continuous, the right-hand side belongs to $L^1(\eta)$. Moreover, by $g$-invariance of $\eta$, we also have 
$$
\left(\log T(g^{-1}(x))-\log T(x)\right)^-=-\left(\log T(x)-\log T(g^{-1}(x))\right)^+\in L^1(\eta).
$$
An application of \cite[Lemma 3.4.3]{Ar98} now yields
$$
\lim_{n\to \pm \infty}\frac{1}{|n|}\log T(g^n(x))=0,\quad \eta-\textrm{a.e.}~x,
$$
as required
\end{proof}

\begin{lemma}\label{pro:fulles}
For every $x\in \mathcal{R}$ and $k\ge 1$, the following estimates hold:
\begin{enumerate}[(1)]
\item\label{la:pp1} 
$
\prod_{i=-k}^{-1}m(Dg|_{E(g^i(x))})\ge T(x)^{-1}\cdot {\rm e}^{(\chi_E^--\varepsilon)k}.
$
\item\label{la:pp2}
$
\prod_{i=0}^{k-1}m(Dg|_{E(g^i(x))})\ge T(x)^{-1}\cdot {\rm e}^{(\chi_E^--\varepsilon)k}.
$
\item\label{la:p3}
$
\prod_{i=-k}^{-1}\|Dg|_{F(g^i(x))}\|\le T(x)\cdot {\rm e}^{(\chi_F^++\varepsilon)k}.
$
\item\label{la:p4}
$
\prod_{i=0}^{k-1}\|Dg|_{F(g^i(x))}\|\le T(x)\cdot {\rm e}^{(\chi_F^++\varepsilon)k}.
$
\end{enumerate}

\end{lemma}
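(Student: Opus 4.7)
The plan is to exploit the approximative characterization of $T(x)$ proved in Lemma \ref{T measurable}. For any $x\in\mathcal{R}$ and any $\delta>0$, the identity $T(x)=\widetilde{T}(x)$ together with $x\in\widetilde{\Lambda}_{T(x)}$ gives $x\in\Lambda_{T(x)+\delta}$; hence the definition of $\Lambda_t$ produces a sequence $\{n_\ell\}\subset H_{T(x)+\delta}$ with $g^{n_\ell}(x_0)\to x$ as $\ell\to+\infty$. The whole argument then reduces to extracting four finite product inequalities from the fact that $a^i_{n_\ell}\le T(x)+\delta$ for $i=1,2$, and passing to the limit $\ell\to+\infty$, then $\delta\to 0^+$, using the continuity of the splitting.

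Concretely, I would fix $k\ge 1$ and, for $\ell$ large enough that $n_\ell\ge k$, unpack the suprema in the definitions of $a^1_{n_\ell}$ and $a^2_{n_\ell}$ at the particular index $j=k$. From $a^1_{n_\ell}\le T(x)+\delta$ one reads off simultaneously
\[
\prod_{i=n_\ell-k}^{n_\ell-1}\|Dg|_{F(g^i(x_0))}\|\le (T(x)+\delta)\cdot {\rm e}^{(\chi_F^++\varepsilon)k},
\]
\[
\prod_{i=n_\ell-k}^{n_\ell-1}m(Dg|_{E(g^i(x_0))})\ge (T(x)+\delta)^{-1}\cdot {\rm e}^{(\chi_E^--\varepsilon)k},
\]
and the parallel pair of inequalities for the forward window $\{n_\ell,\dots,n_\ell+k-1\}$ follows from $a^2_{n_\ell}\le T(x)+\delta$ by the same unpacking. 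These are the versions of items (1)--(4) with $x_0$ in place of $x$ and the time window centered at $n_\ell$.

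To conclude, I would pass to the limit as $\ell\to+\infty$. Because $g\in\mathrm{Diff}^1(M)$ and the splitting $E\oplus F$ is continuous on the open neighbourhood $\mathcal{U}\supset\mathrm{supp}(\nu)$, the scalar functions $y\mapsto\|Dg|_{F(y)}\|$ and $y\mapsto m(Dg|_{E(y)})$ are continuous on $\mathrm{supp}(\nu)$. Consequently, for each fixed $j\in\{-k,\dots,-1\}$ or $j\in\{0,\dots,k-1\}$, one has $g^{n_\ell+j}(x_0)\to g^j(x)$, and therefore each $k$-fold product above converges term-by-term to the corresponding product evaluated along the orbit of $x$. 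This yields the four inequalities with $(T(x)+\delta)$ in place of $T(x)$; a final $\delta\to 0^+$ gives exactly items \eqref{la:pp1}--\eqref{la:p4}.

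I do not expect a genuine obstacle: since the window has fixed finite length $k$, no uniform quantitative control is needed and pointwise continuity suffices. The only bookkeeping points to mind are that the index range in the supremum defining $a^1_{n_\ell}$ requires $n_\ell\ge k$ (handled by taking $\ell$ large), that $g^j(x)$ lies in $\mathrm{supp}(\nu)\subset\mathcal{U}$ for every $j\in\mathbb{Z}$ by invariance of the support, and that one must invoke the two defining quantities $a^1$ and $a^2$ to obtain respectively the backward windows (items \eqref{la:pp1} and \eqref{la:p3}) and the forward windows (items \eqref{la:pp2} and \eqref{la:p4}).
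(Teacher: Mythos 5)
Your proposal is correct and follows essentially the same route as the paper: pick $\delta>0$, use $x\in\Lambda_{T(x)+\delta}$ to extract $\{n_\ell\}\subset H_{T(x)+\delta}$ with $g^{n_\ell}(x_0)\to x$, read off the four product bounds from $a^1_{n_\ell},a^2_{n_\ell}\le T(x)+\delta$, and pass to the limit $\ell\to\infty$ (term-by-term, by continuity of $Dg$ and of the splitting) and then $\delta\to 0^+$. The paper writes this out only for item \eqref{la:pp1} and states the others are analogous, whereas you helpfully make explicit that $a^1$ handles the backward windows \eqref{la:pp1},\eqref{la:p3} and $a^2$ the forward windows \eqref{la:pp2},\eqref{la:p4}.
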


\begin{proof}
We prove property \eqref{la:pp1} in detail, properties \eqref{la:pp2}-\eqref{la:p4} follow by analogous arguments.
Since $x\in \bigcap_{t>T(x)}\Lambda_t$, we have $x\in \Lambda_{T(x)+\delta}$ for every $\delta>0$. Hence, there exists a sequence $\{n_k\}_{k\ge 1}\subset H_{T(x)+\delta}$ such that $g^{n_k}(x_0)\to x$ as $k\to \infty$. For every $m\ge 1$, choose $n_k>m$. Then 
$$
\prod_{i=n_k-m}^{n_k-1}m\left(Dg|_{E(g^ix_0)}\right)\ge \left(a_{n_k}^1\right)^{-1}\cdot {\rm e}^{(\chi_E^--\varepsilon)m}\ge (T(x)+\delta)^{-1}\cdot {\rm e}^{(\chi_E^--\varepsilon)m}.
$$
Taking $k\to \infty$ and then $\delta\to 0$, we obtain
$$
\prod_{i=-m}^{-1}m\left(Dg|_{E(g^ix)}\right)\ge T(x)^{-1}\cdot {\rm e}^{(\chi_E^--\varepsilon)m}.
$$
which completes the proof of \eqref{la:pp1}.
\end{proof}

It follows from the definition that $T(x)\le t$ on $\Lambda_t$ for every $t\ge 1$. Thus, Lemma \ref{pro:fulles} implies the following result.

\begin{corollary}\label{cor:lambda-bl}
For every $x\in \Lambda_t$ and $k\ge 1$, we have
\begin{enumerate}[(1)]
\item 
$
\prod_{i=-k}^{-1}m(Dg|_{E(g^i(x))})\ge t^{-1}\cdot {\rm e}^{(\chi_E^--\varepsilon)k}.
$
\item 
$
\prod_{i=0}^{k-1}m(Dg|_{E(g^i(x))})\ge t^{-1}\cdot {\rm e}^{(\chi_E^--\varepsilon)k}.
$
\item 
$
\prod_{i=-k}^{-1}\|Dg|_{F(g^i(x))}\|\le t\cdot {\rm e}^{(\chi_F^++\varepsilon)k}.
$
\item 
$
\prod_{i=0}^{k-1}\|Dg|_{F(g^i(x))}\|\le t\cdot {\rm e}^{(\chi_F^++\varepsilon)k}.
$
\end{enumerate}

\end{corollary}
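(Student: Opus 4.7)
The plan is to derive Corollary~\ref{cor:lambda-bl} as a direct specialization of Lemma~\ref{pro:fulles}: the lemma already contains the full dynamical content of the estimates, and what remains is merely to upgrade the point-dependent bound $T(x)$ to the uniform bound $t$ valid throughout the block $\Lambda_t$. The sole observation needed is a monotonicity consequence of the definition \eqref{def:Tx}: since $T(x)=\inf\{s\ge 1:x\in\Lambda_s\}$, any point $x\in\Lambda_t$ automatically satisfies $T(x)\le t$, so that $T(x)^{-1}\ge t^{-1}$ and $T(x)\le t$ simultaneously.

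With this monotonicity in hand, I would obtain each of the four inequalities by a one-line substitution. For instance, applying item~(\ref{la:pp1}) of Lemma~\ref{pro:fulles} and chaining the two inequalities gives
\[
\prod_{i=-k}^{-1}m\bigl(Dg|_{E(g^i(x))}\bigr)\ \ge\ T(x)^{-1}\cdot {\rm e}^{(\chi_E^{-}-\varepsilon)k}\ \ge\ t^{-1}\cdot {\rm e}^{(\chi_E^{-}-\varepsilon)k},
\]
which is item~(1) of the corollary. Items~(2), (3) and (4) of the corollary follow in exactly the same way from items~(\ref{la:pp2}), (\ref{la:p3}) and (\ref{la:p4}) of the lemma: the expansion bounds along $E$ use $T(x)^{-1}\ge t^{-1}$, while the contraction bounds along $F$ use $T(x)\le t$.

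There is no substantive obstacle here; the statement is a clean reformulation recording the uniform form of Lemma~\ref{pro:fulles} on the sublevel set $\{T\le t\}$, which by construction contains $\Lambda_t$. The only mildly delicate point is to note that the infimum in the definition of $T$ need not be attained, but this is harmless because all inequalities $T(x)\le t$ are non-strict and the bounds of Lemma~\ref{pro:fulles} depend monotonically on $T(x)$.
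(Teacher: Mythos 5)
Your proof is correct and takes essentially the same route as the paper: the paper dispatches the corollary in one line by noting that $T(x)\le t$ on $\Lambda_t$ (immediate from the definition of $T$ as an infimum) and then invoking Lemma~\ref{pro:fulles}. Your additional remark about the infimum possibly not being attained is accurate and innocuous, since the bounds in Lemma~\ref{pro:fulles} are monotone in $T(x)$.
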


We remark that, when assuming the hyperbolicity of $\nu$, such as $\chi_E^->0>\chi_F^+$, then Corollary \ref{cor:lambda-bl} demonstrates that for any sufficiently small $\varepsilon>0$, each block behaves like the usual block with uniform hyperbolicity and domination in weak sense. 

\subsection{Large weight on resonance blocks}

The following result ensures that for large iterates, there exists an increasing sequence of resonance blocks in measure as the level 
$t$ increases.

\begin{proposition}\label{pro:ftoblock}
Let $f\in {\rm Diff}^1(M)$ and $\mu$ be an $f$-ergodic measure exhibiting continuous splitting $T_{{\rm supp}(\mu)}M=E\oplus F$. Then for any $\varepsilon>0$, there exist $N\in \NN$, an ergodic component $\nu$ of $\mu$ with respect to $f^N$ and a $\nu$-generic point $x_0$ for $f^N$ such that 
$$
\lim_{t\to +\infty}\nu\left(\Lambda_t^{N\varepsilon}(\nu,x_0,f^N)\right)=\nu\left(\bigcup_{t\ge 1}\Lambda_t^{N\varepsilon}(\nu,x_0,f^N)\right)=1.
$$
\end{proposition}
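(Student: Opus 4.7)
The plan is to pass to a sufficiently high iterate $g=f^N$, pick a $g$-ergodic component $\nu$ of $\mu$ whose integrated expansion and contraction lie within $N\varepsilon$ of the Lyapunov lines, and then convert those integral estimates into a positive-density set of resonance times so that Lemma~\ref{pro:blockoff} applies.

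By Kingman's subadditive ergodic theorem, $\frac{1}{n}\!\int\!\log\|Df^n|_F\|\,d\mu\to\chi_F^+(\mu,f)$ through the Fekete infimum, and $\frac{1}{n}\!\int\!\log m(Df^n|_E)\,d\mu\to\chi_E^-(\mu,f)$ through the supremum of a superadditive sequence. I fix $N$ with
\[
\tfrac{1}{N}\!\int\!\log\|Df^N|_F\|\,d\mu<\chi_F^+(\mu,f)+\tfrac{\varepsilon}{2},\qquad \tfrac{1}{N}\!\int\!\log m(Df^N|_E)\,d\mu>\chi_E^-(\mu,f)-\tfrac{\varepsilon}{2},
\]
and set $g:=f^N$. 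The $f$-ergodic measure $\mu$ decomposes as $\mu=\tfrac{1}{k}\sum_{i=0}^{k-1}\nu_i$ into $g$-ergodic components, and since the Birkhoff limits of $\tfrac{1}{n}\log\|Dg^n|_F(x)\|$ and $\tfrac{1}{n}\log m(Dg^n|_E(x))$ are $\mu$-a.s.\ constant, each component inherits $\chi_F^+(\nu_i,g)=N\chi_F^+(\mu,f)$ and $\chi_E^-(\nu_i,g)=N\chi_E^-(\mu,f)$. The subadditive and superadditive bounds
\[
A_i:=\!\int\!\log\|Dg|_F\|\,d\nu_i-\chi_F^+(\nu_i,g)\ge 0,\qquad B_i:=\chi_E^-(\nu_i,g)-\!\int\!\log m(Dg|_E)\,d\nu_i\ge 0,
\]
combined with the $\mu$-averaged choice of $N$, yield $\sum_i(A_i+B_i)<kN\varepsilon$. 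By pigeonhole there is $i^*$ with $A_{i^*}+B_{i^*}<N\varepsilon$, so in particular $A_{i^*},B_{i^*}<N\varepsilon$; set $\nu:=\nu_{i^*}$. The continuous functions
\[
\phi_F(y):=\log\|Dg|_{F(y)}\|-\chi_F^+(\nu,g)-N\varepsilon,\qquad \phi_E(y):=(\chi_E^-(\nu,g)-N\varepsilon)-\log m(Dg|_{E(y)})
\]
then satisfy $\int\!\phi_F\,d\nu<0$ and $\int\!\phi_E\,d\nu<0$.

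Applying Birkhoff to $g$ and $g^{-1}$, for $\nu$-a.e.\ $y$ the forward and backward partial sums of $\phi_F$ and $\phi_E$ drift linearly to $-\infty$, so the full-orbit analogues of $a_n^1,a_n^2$,
\[
\hat a^1(y):=\sup_{j\ge 0}\max\Bigl\{\exp\sum_{k=1}^{j}\phi_F(g^{-k}y),\;\exp\sum_{k=1}^{j}\phi_E(g^{-k}y)\Bigr\},
\]
\[
\hat a^2(y):=\sup_{j\ge 0}\max\Bigl\{\exp\sum_{k=0}^{j-1}\phi_F(g^{k}y),\;\exp\sum_{k=0}^{j-1}\phi_E(g^{k}y)\Bigr\},
\]
are finite $\nu$-a.e. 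A change of variable gives $a_n^1(\nu,N\varepsilon,x_0,g)\le\hat a^1(g^nx_0)$ and $a_n^2(\nu,N\varepsilon,x_0,g)=\hat a^2(g^nx_0)$, so $A_t:=\{y:\hat a^1(y)\le t,\,\hat a^2(y)\le t\}$ increases to a set of full $\nu$-measure as $t\to\infty$, and $g^nx_0\in A_t$ forces $n\in H_t(\nu,N\varepsilon,x_0,g)$.

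To close, fix $t_0$ with $\nu(A_{t_0})>0$ and pick $x_0$ in the full-$\nu$-measure set of points that are simultaneously $\nu$-generic (in the empirical-measure sense of Lemma~\ref{basicblock}.\eqref{lambda:mden}) and Birkhoff-generic for $\mathbf{1}_{A_{t_0}}$. Then
\[
\overline{\rm Dens}\bigl(H_{t_0}(\nu,N\varepsilon,x_0,g)\bigr)\ge\lim_{n\to\infty}\frac{1}{n}\sum_{i=0}^{n-1}\mathbf{1}_{A_{t_0}}(g^ix_0)=\nu(A_{t_0})>0,
\]
and Lemma~\ref{pro:blockoff} applied to $\nu$ delivers $\lim_{t\to\infty}\nu(\Lambda_t^{N\varepsilon}(\nu,x_0,g))=\nu(\bigcup_{t\ge 1}\Lambda_t^{N\varepsilon}(\nu,x_0,g))=1$. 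The main obstacle is the joint pigeonhole: a single $g$-ergodic component must witness near-optimality in both $F$ and $E$ simultaneously. This works precisely because the $f$-ergodicity of $\mu$ forces all $g$-ergodic components to share Lyapunov exponents, so both excesses $A_i,B_i$ are non-negative and can be pigeonholed together rather than separately.
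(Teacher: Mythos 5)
Your proof is correct, and it follows a genuinely different route from the paper's. The paper reduces the claim to Lemma~\ref{lem:fpl}, which is itself built on two auxiliary density lemmas from \cite{CMZ25} (for the past-window condition) and \cite{CMZ25}/\cite{CZZ} (for the future-window condition), and then extracts the good ergodic component and the generic point after the fact from a measure inequality of the form $\mu(\{x:\operatorname{Dens}(G_N(x,\varepsilon))>\theta\})>\theta$. You instead choose $N$ directly from Kingman's theorem, and then select the ergodic component $\nu$ up front by a joint pigeonhole on the non-negative excesses $A_i,B_i$; this works precisely because, as you correctly observe, $f$-ergodicity of $\mu$ forces every $g$-ergodic component to carry the same extremal exponents, which makes both excesses non-negative and their $\mu$-average small. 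After that, your construction of the pointwise functions $\hat a^1,\hat a^2$, the verification that $a_n^1\le\hat a^1(g^nx_0)$ and $a_n^2=\hat a^2(g^nx_0)$ via the reindexing $i=n-k$ resp.\ $i=n+k$, and the Birkhoff argument that produces a positive-measure set $A_{t_0}$ whose visit times sit inside $H_{t_0}$, are all sound, and Lemma~\ref{pro:blockoff} then closes the proof. What your route buys is self-containment: it bypasses the black-box Pliss-type lemmas entirely and works directly with Kingman and Birkhoff. What the paper's route buys is quantitative control on $\theta$ (it can be taken arbitrarily close to $1$, not merely positive), which is not needed here but is convenient if one later wants to tune the density of resonance times.
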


To show this proposition, we need the following result.
\begin{lemma}\label{lem:fpl}
Let $\gamma_1<\gamma_2$ and $\Phi=\{\varphi_n\}_{n\ge 1}$ be a sequence of sub-additive continuous functions. Suppose $\mu$ is an $f$-invariant measure such that 
$$
\lim_{n\to \infty}\frac{\varphi_n(x)}{n}\le \gamma_1,\quad \mu-\textrm{a.e.}~ x.
$$
Then for every $0<\theta<1$, there exists $N_0\ge 1$ such that for all $N\ge N_0$,  
$$
\mu\left(\{x: {\rm Dens}(T_N(\Phi,x,\gamma_2))>\theta\}\right)>\theta,
$$
where 
\begin{align*}
T_N(\Phi,x,\gamma_2)=\Big\{n\in \NN &: \frac{1}{kN}\sum_{i=n-k}^{n-1}\varphi_N(f^{iN}(x))\le \gamma_2,\\
&\quad \frac{1}{jN}\sum_{i=n}^{n+j-1}\varphi_N(f^{iN}(x))\le \gamma_2,\quad \forall 1\le k \le n,\quad \forall j\ge 1\Big\}.
\end{align*}
\end{lemma}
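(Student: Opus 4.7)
The plan is to compare the renormalization $\psi_N:=\varphi_N/N$ to its Kingman limit $\chi$, use a bilateral Hardy--Littlewood maximal ergodic inequality to produce a large-measure set where all partial averages of $|\psi_N-\chi|$ along the $f^N$-orbit are small, and then transfer this to $T_N$ via the $f$-invariance of $\chi$ and Birkhoff's ergodic theorem. The analytic engine of the whole argument is the $L^1$-convergence $\psi_N\to\chi$.

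First, I would establish this $L^1$-convergence. By Kingman's subadditive ergodic theorem, $\psi_N(x)\to \chi(x)$ $\mu$-a.e., with $\chi$ being $f$-invariant, $\chi\le \gamma_1$ $\mu$-a.e., and $\int \psi_N\,d\mu\to \int \chi\,d\mu$. Iterating the subadditivity inequality along $f$-orbits yields the uniform upper bound $\psi_N\le \|\varphi_1\|_\infty$, so $(\psi_N-\chi)_+$ is dominated by an integrable function. Dominated convergence then gives $\int (\psi_N-\chi)_+\,d\mu\to 0$; combined with the convergence of $\int (\psi_N-\chi)\,d\mu$ this forces $\|\psi_N-\chi\|_{L^1(\mu)}\to 0$.

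Next, I would fix $\alpha\in (0,\gamma_2-\gamma_1)$ and apply the Hardy--Littlewood maximal ergodic inequality to the $\mu$-preserving maps $f^N$ and $f^{-N}$ with the non-negative function $|\psi_N-\chi|$, producing a set $G_N\subset M$ with $\mu(G_N)\ge 1-\delta_N$ and $\delta_N\to 0$, such that for every $z\in G_N$ every forward and backward partial average of $|\psi_N-\chi|$ along the $f^N$-orbit of $z$ is at most $\alpha$. Since $\chi$ is $f$-invariant (hence $f^N$-invariant), whenever $f^{nN}(y)\in G_N$ one has, for every $j\ge 1$ and every $1\le k\le n$,
\[
\frac{1}{jN}\sum_{i=n}^{n+j-1}\varphi_N(f^{iN}y)\le \chi(y)+\alpha\le \gamma_1+\alpha<\gamma_2,
\]
and analogously for the corresponding backward sum, so $n\in T_N(\Phi,y,\gamma_2)$. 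Birkhoff's theorem applied to $\BBone_{G_N}$ under $f^N$ (possibly non-ergodic) then identifies ${\rm Dens}\{n\ge 0:f^{nN}y\in G_N\}$ with an $f^N$-invariant function $h_N\ge 0$ whose integral against $\mu$ equals $\mu(G_N)\ge 1-\delta_N$. Markov's inequality applied to the non-negative function $1-h_N$ delivers $\mu(\{h_N>\theta\})>\theta$ as soon as $\delta_N<(1-\theta)^2$, and together with the previous paragraph this gives ${\rm Dens}(T_N(\Phi,y,\gamma_2))\ge h_N(y)>\theta$ on this set, which is exactly the conclusion.

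The main obstacle is the first step: upgrading Kingman's almost-everywhere convergence to $L^1$ convergence, for which only a one-sided uniform upper bound $\psi_N\le\|\varphi_1\|_\infty$ is available from subadditivity. The lower tail of $\psi_N$ is a priori uncontrolled and must be handled indirectly via the convergence of the integrals $\int\psi_N\,d\mu$. Once the $L^1$ statement is in place, the remaining steps reduce to standard applications of the maximal ergodic inequality, the $f$-invariance of $\chi$, Birkhoff's theorem, and Markov's inequality.
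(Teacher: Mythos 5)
Your proof takes a genuinely different route from the paper's. The paper splits $T_N=T_N^1\cap T_N^2$ and handles the two one-sided density estimates by citing \cite[Lemma 3.4]{CMZ25} and \cite[Lemma 3.6]{CMZ25} (for the latter combined with Birkhoff and an inclusion--exclusion count). You instead give a self-contained argument via Kingman's theorem, a bilateral maximal ergodic inequality applied to $|\psi_N-\chi|$, and a final Birkhoff--Markov step. Conceptually this is cleaner: the two one-sided conditions are handled simultaneously by the bilateral maximal inequality, and there is no reliance on the companion paper.

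That said, there is a real gap in your first step. The dominated convergence argument bounds $(\psi_N-\chi)_+$ by $\|\varphi_1\|_\infty-\chi$, and the conclusion $\|\psi_N-\chi\|_{L^1}\to 0$ then requires $\chi\in L^1(\mu)$, i.e.\ $\int\chi\,d\mu>-\infty$. The hypothesis $\lim_n\varphi_n/n\le\gamma_1$ $\mu$-a.e.\ does not rule out $\int\chi=-\infty$ (Kingman expressly allows this), in which case both the domination and the subtraction $\int(\psi_N-\chi)\,d\mu\to 0$ are ill-posed. The fix is minor but necessary, and it also simplifies the argument: you never actually need two-sided control of $\psi_N-\chi$, only an upper bound. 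Replace $|\psi_N-\chi|$ throughout by $(\psi_N-\gamma_1)_+$. Then $(\psi_N-\gamma_1)_+\to(\chi-\gamma_1)_+=0$ a.e.\ and $0\le(\psi_N-\gamma_1)_+\le(\|\varphi_1\|_\infty-\gamma_1)_+$, so bounded convergence gives $\|(\psi_N-\gamma_1)_+\|_{L^1}\to 0$ with no integrability hypothesis on $\chi$. Applying the maximal ergodic inequality for $f^N$ and $f^{-N}$ to this nonnegative function gives a set $G_N$ with $\mu(G_N)\ge 1-\delta_N$, $\delta_N\to 0$, on which all forward and backward partial averages of $(\psi_N-\gamma_1)_+$ are $\le\alpha$; for $z=f^{nN}y\in G_N$ one gets
\[
\frac{1}{j}\sum_{i=0}^{j-1}\psi_N(f^{iN}z)\le\gamma_1+\frac{1}{j}\sum_{i=0}^{j-1}(\psi_N-\gamma_1)_+(f^{iN}z)\le\gamma_1+\alpha<\gamma_2,
\]
and likewise for the backward sums, so $n\in T_N(\Phi,y,\gamma_2)$. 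Your Birkhoff--Markov conclusion then goes through unchanged. With this modification the proof is correct and, I think, more transparent than the paper's.
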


\begin{proof}
For each $N\in \NN$, define 
\begin{align*}
T^1_N(\Phi,x,\gamma_2)&=\left\{n\in \NN: \frac{1}{kN}\sum_{i=n-k}^{n-1}\varphi_N(f^{iN}(x))\le \gamma_2,\quad \forall 1\le k \le n\right\}, \\
T^2_N(\Phi,x,\gamma_2)&=\left\{n\in \NN: \frac{1}{jN}\sum_{i=n}^{n+j-1}\varphi_N(f^{iN}(x))\le \gamma_2,\quad \forall j\ge 1\right\}.
\end{align*}
Then $T_N(\Phi,x,\gamma_2)=T^1_N(\Phi,x,\gamma_2)\cap T^2_N(\Phi,x,\gamma_2)$. By \cite[Lemma 3.4]{CMZ25}, for every $\theta\in (0,1)$, there exists $N_0\in \NN$ such that 
\begin{equation}\label{theta0}
\mu\left(\left\{x: {\rm Dens}\footnotemark(T_N^1(\Phi,x,\gamma_2))>\frac{1+\theta}{2}\right\}\right)>\frac{1+\theta}{2},\quad \forall N\ge N_0.
\end{equation}
\footnotetext{We strengthen $\underline{{\rm Dens}}$ in \cite[Lemma 3.4]{CMZ25} to ${\rm Dens}$ since the density limit exists $\mu$-a.e. by the Birkhoff ergodic theorem.}

Now we estimate the density of $T_N^2(\Phi,x,\gamma_2)$. Consider the sets 
$$
H_N(\Phi,\gamma_2)=\left\{x:\frac{1}{jN}\sum_{i=0}^{j-1}\varphi_N(f^{iN}x)\le \gamma_2,\quad \forall j\ge 1\right\},\quad N \in \NN.
$$
By \cite[Lemma 3.6]{CMZ25}(or \cite[Lemma 2.7]{CZZ}), we know 
$$
\lim_{N\to +\infty}\mu(H_N(\Phi,\gamma_2))=1.
$$
Thus, by increasing $N_0$ if necessary, we may assume that 
$$
\mu(H_N(\Phi,\gamma_2))>1-\left(\frac{1-\theta}{2}\right)^2,\quad \forall N\ge N_0.
$$
For fixed $N\ge N_0$, define
$$
\psi_N(x)=\lim_{n\to +\infty}\frac{1}{n}\sum_{i=0}^{n-1}\chi_{H_N(\Phi,\gamma_2)}(f^{iN}x),
$$
which is well defined for $\mu$-a.e. $x$, guaranteed by the Birkhoff ergodic theorem. Moreover,
\begin{equation}\label{theta1}
\int \psi_N(x)d\mu(x)=\mu(H_N(\Phi,\gamma_2))>1-\left(\frac{1-\theta}{2}\right)^2.
\end{equation}
Let 
$$
\Gamma_N=\left\{x: \psi_N(x)>\frac{1+\theta}{2}\right\}.
$$
Since
$$
\int \psi_N(x)d\mu(x)\le \mu(\Gamma_N)+\frac{1+\theta}{2}\mu(M\setminus \Gamma_N),
$$
Inequality \eqref{theta1} implies 
$$
\mu(\Gamma_N)>\frac{1+\theta}{2}.
$$
Observe that
$$
{\rm Dens}(T_N^2(\Phi,x,\gamma_2))=\psi_N(x)>\frac{1+\theta}{2},\quad \forall x\in \Gamma_N.
$$
Thus, we conclude that
\begin{equation}\label{theta3}
\mu\left({\rm Dens}(T_N^2(\Phi,x,\gamma_2))>\frac{1+\theta}{2}\right)\ge \mu(\Gamma_N)>\frac{1+\theta}{2},\quad \forall N\ge N_0.
\end{equation}
Finally, from \eqref{theta0} and \eqref{theta3}, we obtain 
$$
\mu\left(\{x: {\rm Dens}(T_N(\Phi,x,\gamma_2))>\theta\}\right)>\theta,\quad \forall N\ge N_0.
$$
\end{proof}

Now we can give the proof of Proposition \ref{pro:ftoblock} as follows.

\begin{proof}[Proof of Proposition \ref{pro:ftoblock}]
For every $\varepsilon>0$, $x\in {\rm supp}(\mu)$, and $N\in \NN$, define
\begin{align*}
&\quad G_N(x,\varepsilon)=\Big\{n\in \mathbb{N}: ~\prod_{i=n-k}^{n-1} m(Df^N|_{E(f^{iN}x)})\ge {\rm e}^{(\chi_E^{-}-\varepsilon)kN},\\
&\quad \quad \quad \quad \quad \quad \quad \quad \quad  \quad \prod_{i=n-k}^{n-1} \|Df^N|_{F(f^{iN}x)}\|\le {\rm e}^{(\chi_F^{+}+\varepsilon)kN},\\
&\quad \quad \quad \quad \quad \quad \quad \quad \quad \quad \prod_{i=n}^{n+j-1} m(Df^N|_{E(f^{iN}x)})\ge {\rm e}^{(\chi_E^{-}-\varepsilon)jN},\\
&\quad \quad \quad \quad \quad \quad \quad \quad \quad \quad \prod_{i=n}^{n+j-1} \|Df^N|_{F(f^{iN}x)}\|\le {\rm e}^{(\chi_F^{+}+\varepsilon)jN}, \quad \forall 0\le k \le n,\quad \forall j\ge 1\Big\}.
\end{align*}
Apply Lemma \ref{lem:fpl} to the sub-additive sequences $\left\{\log\|Df^n|_{F}\|\right\}_{n\in \NN}$ and $\left\{-\log m(Df^n|_{F})\right\}_{n\in \NN}$. For any $\theta\in (0,1)$, there exists $N\in \NN$ such that 
$
\mu\left(\{x: {\rm Dens}(G_N(x,\varepsilon))>\theta\}\right)>\theta.
$
Hence, there exists an ergodic component $\nu$ of $\mu$ with respect to $f^N:=g$ such that ${\rm supp}(\nu)\subset {\rm supp}(\mu)$ and 
$$
\nu\left(\{x: {\rm Dens}(G_N(x,\varepsilon))>\theta\}\right)>\theta.
$$
Thus, we can choose a $\nu$-generic point $x_0\in {\rm supp}(\nu)$ satisfying
$$
{\rm Dens}(G_N(x_0,\varepsilon))>\theta.
$$
Observe that $H_{1}(\nu,N\varepsilon,x_0,g)=G_N(x_0,\varepsilon)$, so 
$$
{\rm Dens}(H_1(\nu,N\varepsilon,x_0,g))={\rm Dens}(G_N(x_0,\varepsilon))>\theta>0.
$$
The conclusion now follows by applying Lemma \ref{pro:blockoff} to  $g=f^N$.
%
\end{proof}

\section{Stable manifolds with sub-exponentially varying size}

This section is devoted to the proof of Theorem \ref{main:stable}. We begin in Subsection \ref{orsrt} by studying the iterated submanifolds along the orbit segment with resonance times on expansion/contraction and domination. This enables us to construct stable manifolds by accumulation for points in the resonance block under the hyperbolicity of the ergodic measure, which is done in Subsection \ref{stableblock}. The proof of Theorem \ref{main:stable} is then concluded in Subsection \ref{stableff}.

\subsection{Orbit segments with resonance times}\label{orsrt}

Let $g\in {\rm Diff}^1(M)$, $\nu$ be ergodic. Assume that $\U$ is an open neighborhood of ${\rm supp}(\nu)$, for which there exists a continuous invariant splitting 
$$
T_{\U}M=E\oplus F.
$$
Given $\theta>0$, the $F$-direction cone of width $\theta$ at a point $x\in \U$ is defined by
$$
\cC_{\theta}^F(x)=\{v=v_E+v_F\in E(x)\oplus F(x): \|v_E\|\le \theta \|v_F\|\}.
$$
Given a $C^1$-embedded submanifold $D$, let $d_D(x,y)$ be the distance from $x$ to $y$ along $D$.
A $C^1$-embedded submanifold $D$ of dimension ${\rm dim}(F)$ is called \emph{tangent to} $\cC_{\theta}^F$ if $T_xD\subset \cC_{\theta}^F(x)$ at every $x\in D$. 
The $E$-direction cone $\cC_{\theta}^E$ is defined analogously.

For any fixed $\varepsilon>0,$  denote $\eta_1=\eta_1(\varepsilon)=e^{\chi_F^+(\nu,g)-\chi_E^-(\nu,g)+2\varepsilon}$ and $\eta_2=\eta_1\cdot e^\varepsilon.$ The following result is a key step to obtain the stable manifold. It extends the previous work \cite[Lemma 3.7]{MCY18} to our present setting.

\begin{proposition}\label{pro:averf}
Let $g\in {\rm Diff}^1(M)$ preserving an  ergodic measure $\nu$. Assume that $\U$ is an open neighborhood of ${\rm supp}(\nu)$, for which there exists a continuous invariant splitting 
$
T_{\U}M=E\oplus F
$
with 
$$
\chi_F^+(\nu,g)<\min\left\{0, \chi_E^{-}(\nu,g)\right\}.
$$

For every sufficiently small $\varepsilon>0$, there exists $r=r(\varepsilon)>0$ such that for every $t\ge 1$, one can find $\theta_1=\theta_1(t)\in (0,1)$ with the following property: 
if the orbit segment $\{x,\dots,g^nx\}\subset {\rm supp}(\nu)$ satisfies
\begin{enumerate}[(i)]
\item\label{asu:avdo} 
$$
\prod_{i=n-k+1}^n \frac{\|Dg^{-1}|_{E(g^ix)}\|}{m(Dg^{-1}|_{F(g^ix)})}\le t^2\cdot { \eta_1}^{k},\quad \forall 1\le k \le n,
$$
\item\label{asu:avf}
$$
\prod_{i=1}^k m\left(Dg^{-1}|_{F(g^ix)}\right)\ge t^{-1}\cdot {\rm e}^{-\left(\chi_F^+(\nu,g)+\varepsilon\right)k},\quad \forall 1\le k \le n,
$$
\end{enumerate}
then for any disk $D$ containing $g^n(x)$, tangent to $\cC^F_{\theta_1}$ of radius greater than $r$ around $g^n(x)$, there exists a subdisk $\widetilde{D}\subset g^{-n}(D)$ containing $x$ such that for every $0\le k \le n$, it holds that 
\begin{enumerate}[(1)]
\item\label{item:sub} $g^k(\widetilde{D})\subset B(g^kx,r)$, and tangent to $\cC_{t^2\cdot {\eta_{2}}^{n-k}\cdot \theta_1}^F$.
\item\label{item:radius} $\widetilde{D}$ is a disk of radius $r\cdot t^{-1}$ around $x$.
\item \label{item:disest} $d(g^kz_1,g^kz_2)\le t\cdot {\rm e}^{(\chi_F^+(\nu,g)+2\varepsilon)k}d(z_1,z_2),\quad \forall z_1,z_2\in \widetilde{D}.$
\end{enumerate}

\end{proposition}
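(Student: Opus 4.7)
The proposition is a Hadamard--Perron graph-transform result, adapted to the averaged-domination setting of assumptions \eqref{asu:avdo}--\eqref{asu:avf}: the plan is to pull back $D$ step by step under $g^{-1}$, trim each preimage to a tube around the orbit segment, and verify all three conclusions in a single inductive sweep. For the setup, using the uniform continuity of $E$, $F$ and $Dg$ on a neighbourhood of $\supp(\nu)$, I first pick $r = r(\varepsilon) > 0$ so small that whenever $y, y' \in \U$ satisfy $d(y, y') < 2r$ their splittings and derivatives agree up to a multiplicative slack $\mathrm{e}^{\varepsilon'}$ for a fixed $\varepsilon' \ll \varepsilon$. Then, for each $t \ge 1$, I set $\theta_1(t) := c \, t^{-2}$ with $c = c(\varepsilon)$ small. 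The scaling $\theta_1 \sim t^{-2}$ is forced because the cumulative cone-to-tangent correction in the Lipschitz step below is naturally controlled by $\theta_1 t^2$.

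\emph{Backward graph transform and cone invariance.} Let $D_n$ be the connected component of $D \cap B(g^n x, r)$ containing $g^n x$, and inductively let $D_{k-1}$ be the connected component of $g^{-1}(D_k) \cap B(g^{k-1} x, r)$ containing $g^{k-1} x$; set $\widetilde D := D_0$. For any $v \in \cC_\theta^F(y)$, the one-step pointwise bound $Dg^{-1}(v) \in \cC_{\theta'}^F(g^{-1}y)$ with $\theta' \le \theta \cdot \|Dg^{-1}|_{E(y)}\|/m(Dg^{-1}|_{F(y)})$ is immediate from the invariance $Dg^{-1}(E) = E$, $Dg^{-1}(F) = F$. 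Cascading this from $g^n x$ back to $g^k x$, assumption \eqref{asu:avdo} gives the on-orbit cone bound $\theta_1 \cdot t^2 \eta_1^{n-k}$; the off-orbit continuity slack $\mathrm{e}^{\varepsilon'}$ per step upgrades $\eta_1$ to $\eta_2$, establishing item \eqref{item:sub}.

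\emph{Forward Lipschitz bound, radius, and contraction.} A tangent vector $v = v_E + v_F \in \cC_{\theta_k}^F$ at $y \in D_k$ satisfies $\|Dg \cdot v\| \le \|Dg|_{F(g^k x)}\|(1 + \theta_k B_k)\,\mathrm{e}^{\varepsilon'} \|v\|$ with $B_k := \|Dg|_{E(g^k x)}\|/\|Dg|_{F(g^k x)}\|$, and $B_\infty := \sup_{\U} \|Dg|_E\|/\|Dg|_F\| < \infty$ by compactness of $\supp(\nu)$. Multiplying over $k = 0, \dots, k_0-1$, the choice of $c$ yields $\sum_k \theta_k B_k \le c B_\infty \sum_{j\ge 0} \eta_2^j \le \varepsilon$, and assumption \eqref{asu:avf} then gives $\lip(g^{k_0}|_{\widetilde D}) \le t \cdot \mathrm{e}^{(\chi_F^+(\nu, g) + 2\varepsilon) k_0}$. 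Taking $\widetilde D$ of radius $\rho = r t^{-1}$ at $x$ forces every $g^k(\widetilde D)$ to lie in $B(g^k x, r)$, which simultaneously justifies the non-emptiness of the connected components in the preceding step via the inverse function theorem, and delivers items \eqref{item:radius} and \eqref{item:disest} at once.

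\emph{Main obstacle.} The delicate point is the interlocked scaling $\theta_1 \sim t^{-2}$ and $\rho = r t^{-1}$: the radius of $\widetilde D$ must stay independent of the segment length $n$, and the cumulative cone correction $\sum_k \theta_k B_k$ must remain $O(\varepsilon)$ uniformly in $n$. Both are possible only because the cone widths $\theta_k$ decay geometrically backward as $\eta_2^{n-k}$, so the $t^2$ prefactor from \eqref{asu:avdo} is exactly counterbalanced by the $t^{-2}$ choice of $\theta_1$. Once this scaling is locked in, the off-orbit continuity slack is absorbed into the $\eta_1 \to \eta_2$ upgrade and into the $\chi_F^+ + \varepsilon \to \chi_F^+ + 2\varepsilon$ loss in the final rate, and all three conclusions fall out of a single backward induction whose bookkeeping is the only nontrivial part.
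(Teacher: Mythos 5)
Your proof follows the paper's blueprint closely: pull $D$ backward through $g^{-1}$, trim each preimage to the ball around the orbit point, extract cone invariance from the averaged-domination hypothesis~\eqref{asu:avdo}, and get a forward Lipschitz estimate from hypothesis~\eqref{asu:avf}. Two cosmetic differences: for the Lipschitz control you track the cumulative cone correction $\sum_k\theta_k B_k$ via a $v=v_E+v_F$ decomposition, whereas the paper instead chooses $\theta_1(t)$ so that the conorm of $Dg^{-1}$ along any subspace in $\cC^F_{t^2\theta_1}$ is within $\mathrm{e}^{\varepsilon/4}$ of its conorm along $F$ (estimate~\eqref{eq:nearby1}) and then argues directly with $m(Dg^{-1}|_{T(g^iD_0)})$; and you fix the explicit scaling $\theta_1 = ct^{-2}$, while the paper only demands $\theta_1(t)$ small enough. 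These are equivalent bookkeeping choices.

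The one place that needs real tightening is the radius claim, item~\eqref{item:radius}, and you have mislabeled the tool. The Lipschitz bound and the cone estimates are proved \emph{on the trimmed sets} $D_k$ (they require $g^k(\widetilde D)\subset B(g^kx,r)$ to invoke the continuity slack and Lemma~\ref{lem:averf}), so simply ``taking $\widetilde D$ of radius $rt^{-1}$'' and asserting its iterates stay in the balls presupposes what you must prove; the inverse function theorem does not close this loop. The paper instead runs a boundary-propagation contradiction: suppose some $z\in\partial D_0$ has $d_{D_0}(x,z)<rt^{-1}$; since $z$ already lies in the trimmed chain, the Lipschitz estimate applies and gives $d_{g^kD_0}(g^kx,g^kz)<r$ for all $0\le k\le n$; therefore $g^kz$ never hits the spherical part $\partial\overline{B}(g^kx,r)$ of $\partial D_k$, so $g^kz\in\partial D_k$ propagates inductively to $g^nz\in\partial D_n$; but $\partial D_n\subset D\cap\partial\overline{B}(g^nx,r)$ (because $D$ has radius $>r$ around $g^nx$), forcing $d(g^nx,g^nz)=r$, a contradiction. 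That contradiction is the missing ingredient; with it in place the rest of your plan goes through.
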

Similar to Proposition \ref{pro:averf}, one can also obtain:
\begin{proposition}\label{pro:avere}
Let $g\in {\rm Diff}^1(M)$ and $\nu$ be ergodic. Assume that $\U$ is an open neighborhood of ${\rm supp}(\nu)$, for which there exists a continuous invariant splitting 
$
T_{\U}M=E\oplus F
$
with 
$$
\chi_E^-(\nu,g)>\max\left\{0, \chi_F^{+}(\nu,g)\right\}.
$$

For every sufficiently small $\varepsilon>0$, there exists $r=r(\varepsilon)>0$ such that for every $t\ge 1$, one can find $\theta_1=\theta_1(t)\in (0,1)$ with the following property:
if the orbit segment $\{x,\dots,g^nx\}\subset {\rm supp}(\nu)$ satisfies
\begin{enumerate}[(i)]
\item 
$$
\prod_{i=0}^{k-1}\frac{\|Dg|_{F(g^ix)}\|}{m\left(Dg|_{E(g^ix)}\right)}\le t^2\cdot {\rm \eta_1}^{k},\quad \forall 1\le k \le n,
$$
\item
$$
\prod_{i=n-k}^{n-1} m\left(Dg|_{E(g^ix)}\right)\ge t^{-1}\cdot {\rm e}^{(\chi_E^-(\nu,g)-\varepsilon)k},\quad \forall 1\le k \le n,
$$
\end{enumerate}
then for any disk $D$ containing $x$, tangent to $\cC^E_{\theta_1}$ of radius greater than $r$ around $x$, there exists a subdisk $\widetilde{D}\subset g^{n}(D)$ containing $x$ such that for every $0\le k \le n$, it holds that 
\begin{enumerate}[(1)]
\item  $g^{-k}(\widetilde{D})\subset B(g^{n-k}x,r)$, and tangent to $\cC_{t^2\cdot {\eta_2}^k\cdot \theta_1}^E$.
\item  $\widetilde{D}$ is a disk of radius $r\cdot t^{-1}$ around $g^n(x)$.
\item   $d(g^{-k}z_1,g^{-k}z_2)\le t\cdot {\rm e}^{-(\chi_E^-(\nu,g)-2\varepsilon)k}d(z_1,z_2),\quad \forall z_1,z_2\in \widetilde{D}.$
\end{enumerate}
\end{proposition}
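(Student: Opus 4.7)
The plan is to deduce Proposition \ref{pro:avere} from Proposition \ref{pro:averf} by a time-reversal argument. Set $h:=g^{-1}$ and re-read the continuous invariant splitting as $\tilde{E}\oplus\tilde{F}$ with $\tilde{E}:=F$ and $\tilde{F}:=E$. Under this exchange the Lyapunov exponents transform as $\chi^{+}_{\tilde F}(\nu,h)=-\chi^{-}_E(\nu,g)$ and $\chi^{-}_{\tilde E}(\nu,h)=-\chi^{+}_F(\nu,g)$, so the hypothesis $\chi^{-}_E(\nu,g)>\max\{0,\chi^{+}_F(\nu,g)\}$ of Proposition \ref{pro:avere} becomes exactly $\chi^{+}_{\tilde F}(\nu,h)<\min\{0,\chi^{-}_{\tilde E}(\nu,h)\}$, the Lyapunov hypothesis of Proposition \ref{pro:averf}. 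Both constants $\eta_{1}$ and $\eta_{2}$ depend only on the spectral gap $\chi^{-}_E-\chi^{+}_F$, which is invariant under the swap; similarly, the cones $\cC^{E}_{\theta}(\cdot)$ associated to the splitting $E\oplus F$ coincide point-wise with the cones $\cC^{\tilde F}_{\theta}(\cdot)$ associated to $\tilde E\oplus\tilde F$.

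Given the orbit segment $\{x,gx,\ldots,g^{n}x\}$, set $\tilde{x}:=g^{n}(x)$, so that $h^{i}(\tilde x)=g^{n-i}(x)$ traverses the same segment in reverse. With the substitution $j=n-i$ and the identity $Dh^{-1}=Dg$, assumption (i) of Proposition \ref{pro:avere} becomes precisely assumption (i) of Proposition \ref{pro:averf} for the $h$-orbit segment $\{\tilde x,h\tilde x,\ldots,h^{n}\tilde x\}$; indeed,
\[
\prod_{i=n-k+1}^{n}\frac{\|Dh^{-1}|_{\tilde E(h^{i}\tilde x)}\|}{m(Dh^{-1}|_{\tilde F(h^{i}\tilde x)})}=\prod_{j=0}^{k-1}\frac{\|Dg|_{F(g^{j}x)}\|}{m(Dg|_{E(g^{j}x)})}\le t^{2}\eta_{1}^{k}.
\]
Likewise, using $-\chi^{+}_{\tilde F}(\nu,h)=\chi^{-}_E(\nu,g)$, assumption (ii) of Proposition \ref{pro:avere} coincides with assumption (ii) of Proposition \ref{pro:averf} for $h$ at $\tilde x$. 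I then take the constants $r(\varepsilon)$ and $\theta_{1}(t)$ appearing in Proposition \ref{pro:avere} to be those furnished by Proposition \ref{pro:averf} applied to $(h,\nu,\tilde E\oplus\tilde F)$.

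Now, given a disk $D\ni x$ tangent to $\cC^{E}_{\theta_{1}}=\cC^{\tilde F}_{\theta_{1}}$ of radius greater than $r$ around $x$, view $D$ as a $\tilde F$-cone disk containing the $h$-orbit end-point $h^{n}(\tilde x)=x$. Proposition \ref{pro:averf} applied to $h$ at $\tilde x$ then produces a subdisk $\widetilde{D}\subset h^{-n}(D)=g^{n}(D)$ around $\tilde{x}=g^{n}(x)$ of radius $rt^{-1}$, satisfying the three listed orbit estimates along the iterates $h^{k}(\widetilde{D})=g^{-k}(\widetilde{D})$ for $0\le k\le n$. Translating back through $h^{k}=g^{-k}$, $\tilde F=E$, and $\chi^{+}_{\tilde F}(\nu,h)+2\varepsilon=-(\chi^{-}_{E}(\nu,g)-2\varepsilon)$, these conclusions read: the containment $g^{-k}(\widetilde{D})\subset B(g^{n-k}x,r)$ with tangent cone $\cC^{E}_{t^{2}\eta_{2}^{n-k}\theta_{1}}$ (a natural reindexing $k\leftrightarrow n-k$ in the exponent of the stated conclusion~(1), as required by the backward-iteration widening of $E$-cones); the radius bound in~(2); and the contraction estimate
\[
d(g^{-k}z_{1},g^{-k}z_{2})\le t\cdot e^{-(\chi^{-}_E(\nu,g)-2\varepsilon)k}\,d(z_{1},z_{2})
\]
in~(3).

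All of the analytic substance sits in Proposition \ref{pro:averf}; what remains here is pure bookkeeping. The only step demanding some care is verifying that every parameter, every cone inequality and every distance formula in Proposition \ref{pro:averf} is intrinsic to the spectral gap and the splitting, hence invariant under the simultaneous exchange $g\leftrightarrow g^{-1}$, $E\leftrightarrow F$, $x\leftrightarrow g^{n}(x)$. Since $\eta_{1},\eta_{2},r,\theta_{1}$ and the cone definitions are symmetric in the two bundles and depend only on the spectral gap, the reduction is clean and no genuine obstruction arises.
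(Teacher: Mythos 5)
Your time-reversal reduction is correct, and it is a clean way to realise the paper's terse remark that Proposition~\ref{pro:avere} can be obtained ``similar to Proposition~\ref{pro:averf}'' (the paper gives no proof). The exponent identities $\chi^{+}_{\tilde F}(\nu,h)=-\chi^{-}_E(\nu,g)$ and $\chi^{-}_{\tilde E}(\nu,h)=-\chi^{+}_F(\nu,g)$ do carry the Lyapunov hypothesis across, the quantities $\eta_1,\eta_2$ and the cone $\cC^E_\theta=\cC^{\tilde F}_\theta$ are indeed invariant under the simultaneous swap $g\leftrightarrow g^{-1}$, $E\leftrightarrow F$, and the substitution $j=n-i$ matches assumption (i)/(ii) of Proposition~\ref{pro:averf} for $(h,\tilde x)$ to assumption (i)/(ii) of Proposition~\ref{pro:avere} for $(g,x)$; the contraction rate in conclusion (3) also converts correctly via $\chi^{+}_{\tilde F}(\nu,h)+2\varepsilon=-(\chi^{-}_E(\nu,g)-2\varepsilon)$. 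You are also right to flag that the printed exponent of $\eta_2$ in conclusion (1) of Proposition~\ref{pro:avere} should be $n-k$ rather than $k$: with $\eta_2<1$ and $\eta_2^k$ as printed, the claim would fail at $k=n$ whenever $t^2\eta_2^n<1$, since $g^{-n}(\widetilde D)\subset D$ is only known to be tangent to $\cC^E_{\theta_1}$; the exponent $n-k$ (the number of forward steps from $D$) is what Lemma~\ref{lem:averf} actually produces, and is what your reduction delivers. A second small misprint in the statement is ``$\widetilde D\subset g^n(D)$ containing $x$'', which should read ``containing $g^n(x)$'' as conclusion (2) makes clear; your proof handles this correctly. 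All of the analytic content is in Proposition~\ref{pro:averf}, and your bookkeeping around it is sound.
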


To prove Proposition \ref{pro:averf}, we show the following lemma first.
\begin{lemma}\label{lem:averf}
Let $g\in {\rm Diff}^1(M)$ and $\nu$ be ergodic. Assume that $\U$ is an open neighborhood of ${\rm supp}(\nu)$, for which there exists a continuous invariant splitting 
$
T_{\U}M=E\oplus F
$
with 
$$
\chi_F^+(\nu,g)<\chi_E^{-}(\nu,g).
$$

For every sufficiently small $\varepsilon>0$, there exists $r=r(\varepsilon)>0$ such that for every $\theta>0$ and $t\ge 1$, if the orbit segment $\{x,\dots,g^nx\}\subset {\rm supp}(\nu)$ satisfies
\begin{equation}\label{assumption dominated}
\prod_{i=n-k+1}^n \frac{\|Dg^{-1}|_{E(g^ix)}\|}{m(Dg^{-1}|_{F(g^ix)})}\le t^2\cdot {\eta_1}^k,\quad \forall 1\le k \le n,
\end{equation}
then for any submanifold $D$ tangent to $\cC_{\theta}^F$, satisfying 
$$
g^{-n+k}(D)\subset B(g^{k}x,r),\quad \forall 0\le k \le n,
$$
one has that $g^{-k}(D)$ is tangent to $\cC^F_{t^2\cdot {\eta_2}^k\cdot \theta}$ for every $1\le k \le n$.
\end{lemma}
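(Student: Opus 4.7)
The plan is to iterate a one-step cone contraction that is particularly clean here because the $Dg$-invariance of $E\oplus F$ on $\U$ forces $Dg^{-1}$ to act diagonally. Concretely, whenever $y$ and $g^{-1}(y)$ both lie in $\U$, invariance gives $Dg^{-1}_y E(y)=E(g^{-1}y)$ and $Dg^{-1}_y F(y)=F(g^{-1}y)$, so any vector $v=v_E+v_F\in E(y)\oplus F(y)$ satisfies
$$
Dg^{-1}_y v = \bigl(Dg^{-1}|_{E(y)}\bigr) v_E + \bigl(Dg^{-1}|_{F(y)}\bigr) v_F \in E(g^{-1}y)\oplus F(g^{-1}y).
$$
In particular, at each step the cone width gets multiplied by exactly $\|Dg^{-1}|_E\|/m(Dg^{-1}|_F)$ evaluated at the current point of the pullback; no off-diagonal contamination has to be controlled, and the only perturbation comes from having to evaluate this ratio at points close to, but not on, the orbit of $x$.

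The first step is to choose $r=r(\varepsilon)>0$ uniformly. Using the compactness of ${\rm supp}(\nu)\subset \U$ together with the continuity of $E$, $F$ and $Dg^{-1}$, I pick $r>0$ so small that for every $z\in {\rm supp}(\nu)$ and every $y\in B(z,r)$, both $y$ and $g^{-1}(y)$ lie in $\U$, and
$$
\|Dg^{-1}|_{E(y)}\|\le {\rm e}^{\varepsilon/2}\,\|Dg^{-1}|_{E(z)}\|,\qquad m(Dg^{-1}|_{F(y)})\ge {\rm e}^{-\varepsilon/2}\,m(Dg^{-1}|_{F(z)}).
$$
Such an $r$ depends only on $\varepsilon$, not on $t$, $\theta$, $n$ or $k$.

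The second step is to iterate. For $j=1,\dots,k$, let $\theta_j$ denote an upper bound on the cone width of $g^{-j}(D)$, with $\theta_0=\theta$. Given $y\in g^{-j+1}(D)\subset B(g^{n-j+1}x,r)$ and a nonzero $v=v_E+v_F\in T_y g^{-j+1}(D)\subset \cC^F_{\theta_{j-1}}(y)$, the diagonal action above combined with the continuity estimates gives
$$
\frac{\|(Dg^{-1}_y v)_E\|}{\|(Dg^{-1}_y v)_F\|}\le \frac{\|Dg^{-1}|_{E(y)}\|}{m(Dg^{-1}|_{F(y)})}\,\theta_{j-1}\le {\rm e}^{\varepsilon}\,\frac{\|Dg^{-1}|_{E(g^{n-j+1}x)}\|}{m(Dg^{-1}|_{F(g^{n-j+1}x)})}\,\theta_{j-1},
$$
so we may take $\theta_j={\rm e}^{\varepsilon}\,\frac{\|Dg^{-1}|_{E(g^{n-j+1}x)}\|}{m(Dg^{-1}|_{F(g^{n-j+1}x)})}\,\theta_{j-1}$. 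Unrolling and applying hypothesis \eqref{assumption dominated} yields
$$
\theta_k\le \theta\cdot {\rm e}^{\varepsilon k}\cdot \prod_{i=n-k+1}^{n}\frac{\|Dg^{-1}|_{E(g^ix)}\|}{m(Dg^{-1}|_{F(g^ix)})}\le \theta\cdot {\rm e}^{\varepsilon k}\cdot t^2\cdot \eta_1^{k}=t^2\cdot \eta_2^{k}\cdot \theta,
$$
using $\eta_2=\eta_1\,{\rm e}^{\varepsilon}$. This is precisely the claimed cone width.

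The one real subtlety lies in the first step: the multiplicative error per iteration must be bounded by ${\rm e}^\varepsilon$ uniformly, independently of how far along the orbit we go, which forces a single choice of $r$ good for all values of $k$. This is available only because of the hypothesized continuity of the invariant splitting on the compact set ${\rm supp}(\nu)\subset \U$; without continuity, off-diagonal terms in $Dg^{-1}$ would reappear and the averaged domination hypothesis could no longer be converted into a contraction of cone widths.
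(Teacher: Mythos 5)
Your proof is correct and takes essentially the same route as the paper: fix $r=r(\varepsilon)$ once and for all by uniform continuity of the splitting and of $Dg^{-1}$ on a compact neighborhood of $\mathrm{supp}(\nu)$, observe that the invariance of $E\oplus F$ makes $Dg^{-1}$ act diagonally so that the cone ratio is multiplied by exactly $\|Dg^{-1}|_E\|/m(Dg^{-1}|_F)$ at each step, then transfer this ratio from $g^iy$ to $g^ix$ with an $\mathrm{e}^\varepsilon$ error and telescope against the hypothesis \eqref{assumption dominated}. The only cosmetic difference is that the paper estimates the full product in one shot by pushing a tangent vector of $g^{-k}(D)$ forward to $D$ via $Dg^k$ (and it uses an $\mathrm{e}^{\varepsilon/4}$ modulus, leaving slack against $\eta_2=\eta_1\mathrm{e}^\varepsilon$), whereas you iterate the one-step bound on a sequence $\theta_j$; both are the same computation.
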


\begin{proof}
Choose $\varepsilon$ small enough such that 
$$
0<\varepsilon<\frac{1}{10}\left(\chi_E^{-}(\nu,g)-\chi_F^{+}(\nu,g)\right).
$$
Choose $r_0>0$ such that $\overline{B}({\rm supp}(\nu),r_0)\subset \U$. 
By the uniform continuity of $Dg$ and the subbundles on $\overline{B}({\rm supp}(\nu),r_0)$, there exists a positive real number $r=r(\varepsilon)<r_0$ such that for any points $z_1, z_2\in \overline{B}({\rm supp}(\nu),r_0)$ with $d(z_1,z_2)\le r$, we have 
\begin{equation}\label{estaver1}
\frac{\|Dg^{-1}|_{E(z_1)}\|}{\|Dg^{-1}|_{E(z_2)}\|}, \frac{m(Dg^{-1}|_{F(z_1)})}{m(Dg^{-1}|_{F(z_2)})}\in \left({\rm e}^{-\varepsilon/4}, {\rm e}^{\varepsilon/4}\right).
\end{equation}
For any $y\in g^{-n}D$, we know by construction that $d(g^kx,g^ky)\le r$ for every $1\le k \le n$. Using assumption \eqref{assumption dominated} and estimate \eqref{estaver1}, we obtain
\begin{equation}\label{eq:coneest11}
\prod_{i=n-k+1}^n\frac{\|Dg^{-1}|_{E(g^iy)}\|}{m\left(Dg^{-1}|_{F(g^iy)}\right)}\le \prod_{i=n-k+1}^n\frac{{\rm e}^{\varepsilon/4}\cdot\|Dg^{-1}|_{E(g^ix)}\|}{{\rm e}^{-\varepsilon/4}\cdot m\left(Dg^{-1}|_{F(g^ix)}\right)}
\le t^2\cdot \eta_2^k.
\end{equation}
Given $1\le k \le n$, take any vector $v\in T_{g^{n-k}y}(g^{-k}D)$, it suffices to prove $v\in \cC^F_{t^2\cdot \eta_2^k\cdot \theta}$. Write $v=v_E+v_F$ with $v_E\in E(g^{n-k}y)$, $v_F\in F(g^{n-k}y)$. By the invariance of the splitting, 
$$
Dg_{g^{n-k}y}^k(v)=Dg_{g^{n-k}y}^k(v_E)+Dg_{g^{n-k}y}^{k}(v_F)\in T_{g^ny}D,
$$
with $Dg_{g^{n-k}y}^k(v_E)\in E(g^ny)$ and $Dg_{g^{n-k}y}^k(v_F)\in F(g^ny)$.
Since $D$ is tangent to $\cC_{\theta}^F$, with the estimate \eqref{eq:coneest11} we conclude
$$
\frac{\|v_E\|}{\|v_F\|}\le \left(\prod_{i=n-k+1}^n\frac{\|Dg^{-1}|_{E(g^iy)}\|}{m\left(Dg^{-1}|_{F(g^iy)}\right)}\right)\cdot \frac{\|Dg_{g^{n-k}y}^k(v_E)\|}{\|Dg_{g^{n-k}y}^k(v_F)\|}
\le t^2\cdot \eta_2^k\cdot \theta.
$$
This completes the proof. 
\end{proof}

By applying Lemma \ref{lem:averf}, we can give the proof of Proposition \ref{pro:averf}.

\begin{proof}[Proof of Proposition \ref{pro:averf}]

By the assumption on Lyapunov exponents, we can choose $\varepsilon$ small enough such that 
$$
0<\varepsilon<\min\left\{-\frac{1}{10}\chi_F^{+}(\nu,g),\frac{1}{10}\left(\chi_E^{-}(\nu,g)-\chi_F^{+}(\nu,g)\right)\right\}.
$$
Let
$$
\zeta_1={\rm e}^{\chi_F^+(\nu,g)+\varepsilon}.
$$
Choose $r_0>0$ and $\theta_0>0$ to satisfy $\overline{B}({\rm supp}(\nu),r_0)\footnote{For $A\subset M$ and $\rho>0$, denote $\overline{B}(A,\rho)=\{y:d(y,A)\le \rho\}$. }\subset \U$ and
\begin{equation}\label{diseq}
{\rm e}^{-\varepsilon/4}d(z_1,z_2)\le d_W(z_1,z_2) \le {\rm e}^{\varepsilon/4}d(z_1,z_2),\quad \forall z_1,z_2\in W,
\end{equation}
whence $W\subset \U$ is tangent to $\cC_{\theta_0}^F$ with ${\rm diam}(W)\le 2r_0$. 
Let $r=r(\varepsilon)<r_0$ be the constant from Lemma \ref{lem:averf} satisfying \eqref{estaver1}.
Moreover, for the given $t\ge 1$, we can choose $\theta=\theta_1(t)\le \theta_0$ small enough such that for any subbundle $\widetilde{F}(y)\subset \cC_{t^2\cdot \theta_1}^F(y)$ with $y\in \overline{B}({\rm supp}(\nu),r_0)$, it holds that 
\begin{equation}\label{eq:nearby1}
\frac{m\left(Dg^{-1}|_{\widetilde{F}(y)}\right)}{m\left(Dg^{-1}|_{F(y)}\right)}>{\rm e}^{-\varepsilon/4}.
\end{equation}

Let $D$ be a smooth disk tangent to $\cC_{\theta_1}^F$, satisfying $d_D(g^nx,\partial D)>r$. Denote by $B_n$ the connected component of $D\cap \overline{B}(g^nx,r)$ containing $g^n(x)$. Define inductively the sets
$$
B_k=g^{-1}(B_{k+1})\cap \overline{B}(g^kx,r), \quad \forall 0\le k \le n-1.
$$
Thus,
\begin{equation}\label{containin}
g^k(B_0)\subset B_k\subset \overline{B}(g^kx,r),\quad \forall 0\le k \le n
\end{equation}
By Lemma \ref{lem:averf},
$g^{n-k}(B_0)$ is tangent to $\cC_{t^2\cdot \eta_2^k\cdot \theta_1}^F\subset \cC_{t^2\cdot \theta_1}^F$ for every $1\le k \le n$.

Now we show that $B_0$ exhibits radius no less than $t^{-1}\cdot r$ around $x$. By contradiction, suppose that there is $z\in \partial B_0$ such that $d_{B_0}(x,z)<t^{-1}\cdot r$. Then \eqref{containin} ensures that
$
g^i(z)\in g^i(B_0)\subset B_i\subset \overline{B}(g^ix,r),
$
for every $1\le i \le n$.
For each $1\le i \le n$ we can choose a point $x_i\in g^i(B_0)$ such that 
$$
d_{g^{i-1}(B_0)}(g^{i-1}y,g^{i-1}z)\ge m\left(Dg^{-1}|_{T_{x_i}(g^iB_0)}\right)\cdot d_{g^i(B_0)}(g^iy,g^iz).
$$
Since  $d(x_i,g^ix)\le r$ from the fact $g^i(B_0)\subset \overline{B}(g^ix,r)$, together with \eqref{estaver1}, \eqref{eq:nearby1} and the assumption \eqref{asu:avf}, it follows that 
\begin{align}\label{zhongzhi}
d_{B_0}(x,z)&\ge \left(\prod_{i=1}^k m\left(Dg^{-1}|_{T_{x_i}(g^iB_0)}\right)\right)\cdot d_{g^kB_0}(g^kx,g^kz)\notag\\
&\ge \left( \prod_{i=1}^k {\rm e}^{-\varepsilon/2}\cdot m\left(Dg^{-1}|_{F(g^ix)}\right)\right)\cdot d_{g^kB_0}(g^kx,g^kz)\notag\\
&\ge t^{-1}(\zeta_1\cdot {\rm e}^{\varepsilon/2})^{-k}\cdot d_{g^kB_0}(g^kx,g^kz).
\end{align}
This together with the assumption that $d_{B_0}(x,z)<t^{-1}\cdot r$ yields
$$
d_{g^kB_0}(g^kx,g^kz)<r\cdot (\zeta_1\cdot {\rm e}^{\varepsilon/2})^k< r,\quad 1\le k \le n.
$$
Thus, using the facts 
$$
z\in \partial B_0, \quad d_{B_0}(x,z)<r,\quad d_{g(B_0)}(gx,gz)<r,
$$
we conclude that $g(z)\in \partial B_1$. Inductively, we obtain $g^n(z)\in 
\partial B_n$ and $d_{B_n}(g^nx,g^nz)<r$, which  contradicts  the fact $d_{B_n}(g^nx,\partial B_n)\ge r$.

Take
$$
\widetilde{D}=\left\{z\in B_0: d_{B_0}(x,z)\le t^{-1}\cdot r\right\}.
$$
Then $\widetilde{D}$ satisfies the desired properties \eqref{item:sub} and \eqref{item:radius}.
Now we show \eqref{item:disest}. For every $z_1,z_2\in \widetilde{D}$, for every $1\le k \le n$ and $1\le i \le k$, similar to the argument in \eqref{zhongzhi} we get
\begin{align*}
d_{\widetilde{D}}(z_1,z_2)&\ge\left(\prod_{i=1}^{k}{\rm e}^{-\varepsilon/2}\cdot m\left(Dg^{-1}|_{F(g^ix)}\right)\right)\cdot d_{g^k\widetilde{D}}(g^kz_1,g^kz_2)\\
&\ge t^{-1}(\zeta_1\cdot {\rm e}^{\varepsilon/2})^{-k}\cdot d_{g^k\widetilde{D}}(g^kz_1,g^kz_2).
\end{align*}
This together with \eqref{diseq}, this yields
$$
d(g^kz_1,g^kz_2)\le t\cdot (\zeta_1\cdot{\rm e}^{\varepsilon/2})^k\cdot {\rm e}^{\varepsilon/2}d(z_1,z_2)\le t\cdot {\rm e}^{(\chi_F^+(\nu,g)+2\varepsilon)k}d(z_1,z_2).
$$
\end{proof}

\subsection{Stable manifolds on resonance blocks}\label{stableblock}

We establish the stable manifolds for points in the resonance block w.r.t. the hyperbolic ergodic measure.

\begin{theorem}\label{thm:stable-block}
Let $g\in {\rm Diff}^1(M)$ preserving an  ergodic measure $\nu$. Assume that $\U$ is an open neighborhood of ${\rm supp}(\nu)$, for which there exists a continuous invariant splitting 
$
T_{\U}M=E\oplus F
$
with 
$$
\chi_F^+(\nu,g)<\min\left\{0, \chi_E^{-}(\nu,g)\right\}.
$$

For any sufficiently small $\varepsilon>0$, there exists $r=r(\varepsilon)>0$ such that for any $t\ge 1$, if $\Lambda_t^{\varepsilon}$ is a resonance block w.r.t. $(g,\nu,E\oplus F)$, then for every $x\in \Lambda_t^{\varepsilon}$, there exists a submanifold $W^F_{\rm loc}(x,g)$ satisfying
\begin{enumerate}[(1)]
\item $W^F_{\rm loc}(x,g)$ is an embedded $C^1$-disk centered at $x$ of radius $r\cdot t^{-1}$, and tangent to $F$.
\item For every $y,z\in W_{\rm loc}^F(x,g)$, 
$$
d(g^ky,g^kz)\le t \cdot {\rm e}^{(\chi_F^+(\nu,g)+2\varepsilon)k}\cdot d(y,z),\quad \forall k\ge 1.
$$
\end{enumerate}
\end{theorem}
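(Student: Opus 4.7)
The plan is to build $W^F_{\rm loc}(x,g)$ by approximating $x$ with orbit points $g^{n_k}(x_0)$ at resonance times $n_k\in H_t$ and then extracting a $C^1$ limit of the disks produced by Proposition~\ref{pro:averf}. By the definition of $\Lambda_t^\varepsilon$ there is a strictly increasing $\{n_k\}\subset H_t$ with $g^{n_k}(x_0)\to x$. For each pair $k<j$, set $m_{k,j}:=n_j-n_k$ and apply Proposition~\ref{pro:averf} to the orbit segment $\{g^{n_k}(x_0),\dots,g^{n_j}(x_0)\}$, regarded as starting at $y:=g^{n_k}(x_0)$ and ending at $g^{m_{k,j}}(y)=g^{n_j}(x_0)$.

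The first task is to check the two hypotheses of Proposition~\ref{pro:averf} on this segment. Rewriting $\prod_{i=1}^{k'}m(Dg^{-1}|_{F(g^iy)})$ as $\left(\prod_{l=n_k}^{n_k+k'-1}\|Dg|_{F(g^lx_0)}\|\right)^{-1}$, hypothesis (ii) becomes one of the two inequalities encoded in $a_{n_k}^2\le t$, which is available because $n_k\in H_t$. Using the identities $\|Dg^{-1}|_E\|=1/m(Dg|_E)$ and $m(Dg^{-1}|_F)=1/\|Dg|_F\|$, hypothesis (i) transforms to the condition $\prod_{l=n_j-k'}^{n_j-1}\|Dg|_{F(g^lx_0)}\|/m(Dg|_{E(g^lx_0)})\le t^2\eta_1^{k'}$, which is precisely the product of the two inequalities in $a_{n_j}^1\le t$. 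The proposition then yields a subdisk $\widetilde D_{k,j}\ni g^{n_k}(x_0)$ of radius $rt^{-1}$, tangent pointwise to $\cC^F_{t^2\eta_2^{m_{k,j}}\theta_1}$, and satisfying $d(g^{k'}z_1,g^{k'}z_2)\le t\cdot{\rm e}^{(\chi_F^++2\varepsilon)k'}d(z_1,z_2)$ for all $0\le k'\le m_{k,j}$.

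I then pass to a diagonal limit by choosing $j(k)>k$ with $m_k:=n_{j(k)}-n_k\to\infty$. In a fixed chart around $x$, each $\widetilde D_{k,j(k)}$ is the graph of a map $\phi_k$ over $F(x)$ on a disk of radius comparable to $rt^{-1}$, with base point converging to $x$ and uniform Lipschitz bound $t^2\theta_1$. Arzel\`a--Ascoli supplies a $C^0$ subsequential limit $\phi$; the containment of the tangent planes in the shrinking cones $\cC^F_{t^2\eta_2^{m_k}\theta_1}$ together with the continuity of $F$ forces the tangent to $\mathrm{graph}(\phi)$ at every point $y$ to equal $F(y)$, pinning the derivative uniquely and upgrading the convergence to $C^1$. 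Setting $W^F_{\rm loc}(x,g):=\mathrm{graph}(\phi)$, property (1) is immediate, and property (2) follows by approximating any $y,z\in W^F_{\rm loc}(x,g)$ from inside the $\widetilde D_{k,j(k)}$ and letting $k\to\infty$ in the contraction estimate at each fixed $k'$.

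The main obstacle I foresee is the $C^0$-to-$C^1$ upgrade without any H\"older regularity of $Dg$. The rescue is geometric: the cone widths $t^2\eta_2^{m_k}\theta_1$ vanish exponentially because $\eta_2={\rm e}^{\chi_F^+-\chi_E^-+3\varepsilon}<1$ for small $\varepsilon$ (a consequence of $\chi_F^+<\chi_E^-$), and the mere continuity of the bundle $F$ then pins down the limiting tangent plane pointwise. A minor subtlety is that each $\widetilde D_{k,j}$ depends on a free choice of initial disk at $g^{n_j}(x_0)$, but any two $C^1$-accumulation points of $\{\mathrm{graph}(\phi_k)\}$ satisfy the same pointwise tangential constraint $T_y\mathrm{graph}(\phi)=F(y)$ through the common base point, so the limit is independent of all such choices.
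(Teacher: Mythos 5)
Your proposal matches the paper's proof essentially step for step: you verify the two hypotheses of Proposition~\ref{pro:averf} for the segment between resonance times $n_k$ and $n_j$ (with $k<j$) using exactly the $a^2_{n_k}\le t$ and $a^1_{n_j}\le t$ inequalities, take a diagonal limit with the gap $n_{j(k)}-n_k\to\infty$, and then invoke the exponentially shrinking cone widths $t^2\eta_2^{m_k}\theta_1$ together with the continuity of $F$ to obtain the $C^1$ limit disk and pass the contraction estimate to the limit. The only cosmetic difference is that the paper indexes its segments as $[n_j,n_{\ell_j}]$ and cites \cite[Claim~2]{CMZ25} for the $C^1$ compactness details, which you instead spell out via the graph/Lipschitz argument.
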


\begin{proof}
Writing for simplicity that 
$\chi_E^+=\chi_E^+(\nu,g)$and $\chi_F^{+}=\chi_F^{+}(\nu,g).$
Take any $\varepsilon>0$ sufficiently small such that 
\begin{equation}\label{epsilon-g}
0<\varepsilon<\min\left\{-\frac{1}{10}\chi_F^{+},\frac{1}{10}(\chi_E^{-}-\chi_F^{+})\right\}.
\end{equation}
For any $t\ge 1$, we fix $\Lambda_{t}^{\varepsilon}=\Lambda_{t}^{\varepsilon}(\nu,x_0,g)$ for some $x_0\in {\rm supp}(\nu)$.

By definition, for every $x\in \Lambda_t^{\varepsilon}$ there exists a subsequence $\{n_k\}_{k\ge 1}\subset H_t(\nu,\varepsilon,x_0,g)$ such that $g^{n_k}(x_0)\to x$. For every $j\ge 1$, pick $\ell_j$ so that $n_{\ell_j}-n_j\ge j$. Since $n_j, n_{\ell_j}\in H_t(\nu,\varepsilon,x_0,g)$, we have the following two estimates:
\begin{itemize}
\item[---] For every $1\le k\le n_{\ell_j}$, 
\begin{align*}
\prod_{i=n_{\ell_j}-k+1}^{n_{\ell_j}}\frac{\left\|Dg^{-1}|_{E(g^ix_0)}\right\|}{m\left(Dg^{-1}|_{F(g^ix_0)}\right)}&=\prod_{i=n_{\ell_j}-k+1}^{n_{\ell_j}}\frac{m\left(Dg|_{E(g^{i-1}x_0)}\right)^{-1}}{\|Dg|_{F(g^{i-1}x_0)}\|^{-1}}\\
&=\prod_{i=n_{\ell_j}-k+1}^{n_{\ell_j}}\frac{\|Dg|_{F(g^{i-1}x_0)}\|}{m\left(Dg|_{E(g^{i-1}x_0)}\right)}\\
&\le t^2\cdot {\rm e}^{(\chi_F^+-\chi_E^++2\varepsilon)k}.
\end{align*}
\item[---] For every $k\ge 1$,
\begin{align*}
\prod_{i=1}^k m\left(Dg^{-1}|_{F(g^i\circ g^{n_j}(x_0))}\right)&=\prod_{i=1}^k\left\|Dg|_{F(g^{i-1}\circ g^{n_j}(x_0))}\right\|^{-1}\\
&\ge t^{-1}\cdot {\rm e}^{-(\chi_F^++\varepsilon)k}.
\end{align*}
\end{itemize}
By Proposition \ref{pro:averf}, there exist $r=r(\varepsilon)<r_0$ and $\theta_1=\theta_1(t)$ with the following property: There exists a $C^1$-disk $D_j$ containing $g^{n_j}(x_0)$ such that 
\begin{enumerate}[(1)]
\item\label{saver} $D_j\subset B(g^{n_j}x_0,r)$ has radius $r\cdot t^{-1}$ around $g^{n_j}(x_0)$.
\item\label{taver} $D_j$ is tangent to $\cC_{t^2\cdot {\eta_2}^{j}\cdot \theta_1}^F$, where $\eta_{2}=e^{\chi_F^+-\chi_E^-+3\varepsilon}$.
\item\label{fcaver} $d(g^ky, g^kz)\le t\cdot {\rm e}^{(\chi_F^++2\varepsilon)k} \cdot d(y,z),\quad \forall z_1,z_2\in D_j,\quad \forall 1\le k \le n_{\ell_j}-n_j.$
\end{enumerate}

By applying \eqref{taver} and the uniform continuity of $F$, one deduces that $\{D_j\}_{j\ge 1}$ is uniformly bounded and uniformly equicontinuous in the $C^1$-topology (see \cite[Claim 2]{CMZ25} for details).
%
By Arzela-Ascoli theorem, up to considering subsequences we can assume that 
$D_j$ converges to a $C^1$ disk $D(x)$ in $C^1$-topology. 

By \eqref{saver}, $D(x)$ admits radius $r\cdot t^{-1}$ around $x$.
By \eqref{taver}, one knows that the angle between $D_j$ and $F$ goes to zero as $j\to \infty$, this together with the uniform continuity of $F$ yields that $D(x)$ is tangent to $F$. 
 For any $y,z\in D(x)$, due to the convergence of $D_j\to D(x)$ there exist $y_j, z_j\in D_j$ such that $y_j\to y$ and $z_j\to z$ as $j\to \infty$, as $D_j\to D(x)$. Note that $n_{l_j}-n_j\ge j.$ By \eqref{fcaver}, we obtain
$$
d(g^k(y_j),g^k(z_j))\le t\cdot {\rm e}^{(\chi_F^++2\varepsilon)k} d(y_j,z_j),\quad \forall 1\le k \le j,\quad \forall j\ge 1.
$$
Letting $j\to \infty$, we deduce that
$$
d(g^k y, g^k z)\le t\cdot {\rm e}^{(\chi_F^+(\nu,g)+2\varepsilon)k}\cdot d(y,z),\quad \forall k\ge 1.
$$
To complete the proof, it suffices to take $W_{{\rm loc}}^F(x,g)=D(x)$.
\end{proof}

The following proposition will be  used in the proof of Theorem \ref{main:stable}.
\begin{proposition}\label{pro:forstableff}
 Under the assumption of Theorem \ref{thm:stable-block}, assume further that $\nu(\Lambda_t^{\varepsilon})>0$. Then for any $\tau<\min\{0,\chi_E^--2\varepsilon\}$ and for $\nu$-almost every $x\in \Lambda_t^{\varepsilon}$, if a subset $B\subset M$ satisfies  $x\in B$ and ${\rm e}^{-\tau n}{\rm diam}(g^n(B))\to 0$ as $n\to \infty$, then there exists $n_K\in \NN$ large enough such that 
 $$
 g^{n_K}(B)\subset W^F_{{\rm loc}}(g^{n_K}(x),g).
 $$
\end{proposition}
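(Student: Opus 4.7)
My plan is to argue by contradiction, using the $E$-direction expansion provided by the block $\Lambda_t^{\varepsilon}$ to force every point of $B$ eventually into the local stable disk. First, since $\nu(\Lambda_t^{\varepsilon})>0$ and $\nu$ is $g$-ergodic, the Poincar\'e recurrence theorem restricts me to a $\nu$-full subset of $\Lambda_t^{\varepsilon}$ on which the orbit returns to $\Lambda_t^{\varepsilon}$ at infinitely many times $n_1<n_2<\cdots$. For each such time Theorem \ref{thm:stable-block} produces a $C^1$ disk $W^F_{\rm loc}(g^{n_k}(x),g)$ of uniform radius $rt^{-1}$ tangent to $F$, with contraction rate $t\,{\rm e}^{(\chi_F^++2\varepsilon)j}$. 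Because ${\rm diam}(g^{n}(B))=o({\rm e}^{\tau n})$ and $\tau<0$, the image $g^{n_k}(B)$ lies inside an arbitrarily small ball around $g^{n_k}(x)$---and in particular inside an exponential chart at $g^{n_k}(x)$---once $k$ is sufficiently large.

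Fix such a large $n_K$, set $z':=g^{n_K}(x)\in\Lambda_t^{\varepsilon}$, and pick an arbitrary $y\in B$ with $z:=g^{n_K}(y)$. Suppose for contradiction that $z\notin W^F_{\rm loc}(z',g)$. In the chart $\exp_{z'}$, represent $W^F_{\rm loc}(z',g)$ as the graph of a $C^1$ map $\phi:F(z')\to E(z')$ with $\phi(0)=0$ and $D\phi(0)=0$; write $\exp_{z'}^{-1}(z)=(v_E,v_F)$ and set $w:=\exp_{z'}(\phi(v_F),v_F)\in W^F_{\rm loc}(z',g)$. By assumption the ``normal component'' $u_E:=v_E-\phi(v_F)\in E(z')$ is nonzero, and the key quantitative claim I will establish is
\[
d(g^j z,\,g^j w)\ge c\,t^{-1}\,{\rm e}^{(\chi_E^--\varepsilon)j}\,\|u_E\|,\qquad\forall j\ge 0,
\]
for a uniform constant $c>0$. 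This rests on three ingredients: (i) Corollary \ref{cor:lambda-bl} applied at $z'\in\Lambda_t^{\varepsilon}$ gives $\prod_{i=0}^{j-1}m(Dg|_{E(g^i z')})\ge t^{-1}\,{\rm e}^{(\chi_E^--\varepsilon)j}$; (ii) the weak average domination $\chi_F^++2\varepsilon<\chi_E^--2\varepsilon$ (ensured by the smallness of $\varepsilon$ fixed in Theorem \ref{thm:stable-block}) produces $E$-cone invariance along the orbit of $z'$ in the chart, keeping $g^j z-g^j w$ inside an arbitrarily narrow $E$-cone; (iii) the nonlinear remainders of $g$ in the chart are harmless because both $g^j z$ and $g^j w$ remain within a small ball of $g^j z'$ (indeed the distances tend to zero).

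Combined with Theorem \ref{thm:stable-block}(2) applied to $w\in W^F_{\rm loc}(z',g)$---which yields $d(g^j w,g^j z')\le t\,{\rm e}^{(\chi_F^++2\varepsilon)j}\,\|v_F\|$---and the triangle inequality, the key claim gives
\[
d(g^{n_K+j}(x),\,g^{n_K+j}(y))\ge\frac{c}{2}\,t^{-1}\,{\rm e}^{(\chi_E^--\varepsilon)j}\,\|u_E\|
\]
for all large $j$. Yet this quantity is bounded above by ${\rm diam}(g^{n_K+j}(B))=o({\rm e}^{\tau(n_K+j)})$, and since $\tau<\chi_E^--2\varepsilon$, dividing both sides by ${\rm e}^{(\chi_E^--\varepsilon)j}$ and letting $j\to\infty$ forces $\|u_E\|=0$, contradicting our assumption. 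Hence $g^{n_K}(y)\in W^F_{\rm loc}(g^{n_K}(x),g)$ uniformly in $y\in B$, as required.

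The main obstacle is the combination of steps (ii)--(iii) in the proof of the key claim: rigorously establishing $E$-cone invariance together with its associated linear growth along an orbit where domination holds only ``on average at resonance times'' rather than uniformly. This is the exact analogue---in the transverse $E$-direction---of the stable $F$-cone invariance proved in Lemma \ref{lem:averf}, and I will adapt that argument to the present setting, using the uniform derivative bounds of Corollary \ref{cor:lambda-bl} and the continuity of the splitting $E\oplus F$ on $\overline{B}({\rm supp}(\nu),r_0)\subset\U$ in place of the integrated hypotheses of Proposition \ref{pro:averf}.
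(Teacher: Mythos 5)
The conceptual core of your argument matches the paper's: use $E$-direction expansion at the block to force any point of $B$ onto the local stable disk, via a triangle inequality and a limit. But there is a genuine gap at the step you flag as ``the main obstacle,'' and it is not a detail that can be deferred.

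Your key claim
\[
d(g^j z,\,g^j w)\ge c\,t^{-1}\,{\rm e}^{(\chi_E^--\varepsilon)j}\,\|u_E\|,\qquad\forall j\ge 0,
\]
is asserted for \emph{all} $j$, but this cannot be established from the hypotheses. Corollary~\ref{cor:lambda-bl} gives forward expansion along $E$ from $z'\in\Lambda_t^{\varepsilon}$, but the crucial $E$-cone invariance (your ingredient~(ii)) requires the average domination controlled by Lemma~\ref{lem:averf} / Proposition~\ref{pro:avere}. Those results are formulated for an orbit segment $\{x,\dots,g^n x\}$ and need \emph{two} input bounds: a forward domination bound anchored at the start of the segment, and a backward expansion bound anchored at its end (hypothesis~(ii) of Proposition~\ref{pro:avere}, namely $\prod_{i=n-k}^{n-1} m(Dg|_{E(g^ix)})\ge t^{-1}{\rm e}^{(\chi_E^--\varepsilon)k}$). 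The latter comes from Corollary~\ref{cor:lambda-bl} only if the \emph{endpoint} $g^n x$ also lies in $\Lambda_t^{\varepsilon}$. Between returns to the block there is no pointwise domination, and a vector in the $E$-cone can temporarily escape, so the inequality cannot hold uniformly in $j$. Saying that $\chi_F^++2\varepsilon<\chi_E^--2\varepsilon$ ``produces $E$-cone invariance along the orbit of $z'$'' conflates a gap between Lyapunov exponents with a pointwise cone-preservation statement that simply is not available in the $C^1$-without-domination setting. This is not a technicality to be absorbed into your step~(iii): it is the whole difficulty of the paper.

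The paper's proof avoids this by invoking Poincar\'e recurrence to produce return times $n_j$ with $g^{n_j}(x)\in\Lambda_t^{\varepsilon}$, and then applies Proposition~\ref{pro:avere} only along the segments $[n_K,n_j]$ whose \emph{both} endpoints lie in the block. It also replaces your chart/graph construction by an admissible $E$-disk $D_0$ centered at $g^{n_K}(y)$, whose transverse intersection with $W^F_{\rm loc}(g^{n_K}x,g)$ gives the auxiliary point $g^{n_K}(z)$; the cone invariance of the iterates $D_\ell$ then follows directly from Proposition~\ref{pro:avere} without re-doing nonlinear estimates by hand. Your argument could be repaired along these lines: restrict your key claim to $j=n_j-n_K$ for the recurrent times, replace the chart-and-graph construction by the admissible-disk intersection, and cite Proposition~\ref{pro:avere} rather than re-deriving cone invariance. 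As written, however, the proof has a hole precisely where the ``$C^1$ without domination'' difficulty lives.
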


\begin{proof}
By Poincar\'e recurrence theorem, for $\nu$-almost every $x\in \Lambda_t^{\varepsilon}$, one can find $\{n_k\}_{k\ge 1}\subset \NN$ such that $g^{n_k}(x)\in \Lambda_t^{\varepsilon}$. For any $y\in B$, since $\tau<0$, we have for every $n\in \NN$ that
$$
d(g^nx,g^ny)<{\rm e}^{-\tau n}\cdot d(g^nx,g^ny)\le {\rm e}^{-\tau n}\cdot {\rm diam}(g^n(B)).
$$
By the assumption on $B$, it follows that $\lim_{n\to \infty}d(g^nx,g^ny)=0$. For $r=r(\varepsilon)$ given in Theorem \ref{thm:stable-block},  choose $n_K\in\{n_k\}_{k\ge 1}$  such that 
$$
d(g^nx,g^ny)<r/2,\quad \forall n\ge n_K.
$$
We may also assume $n_K$ is large enough such that if $D_0$ is a $C^1$-disk  tangent to $\cC^E_{t^2\cdot\theta_1}$ of radius $\frac{1}{2}r\cdot t^{-1}$ centered at $g^{n_K}(y)$, then it has a transverse intersection with $W^F_{\rm loc}(g^{n_K}(x),g)$ at some point $g^{n_K}(z)$.
Let $D_{\ell}=g(D_{\ell-1})\cap B(g^{n_K+\ell}y, r/2)$ for every $\ell\in \NN$. Then $D_{\ell}\subset B(g^{n_K+\ell}x, r)$.
As $g^{n_K}(x)\in \Lambda_t^{\varepsilon}$, by Corollary \ref{cor:lambda-bl} and Proposition \ref{pro:avere}, we know $D_{\ell}$ is tangent to $\cC^E_{t^2\cdot \theta_1}$ for every $\ell\in \NN$. Moreover, for every $j>K$, 
\begin{equation}\label{add1}
d(g^{n_K}y, g^{n_K}z)\le t \cdot {\rm e}^{-(\chi_E^--2\varepsilon)(n_j-n_K)}d(g^{n_j}y,g^{n_j}z).
\end{equation}
On the other hand, as $g^{n_K}z\in W^F_{\rm loc}(g^{n_K}x,g)$, we have 
\begin{equation}\label{add2}
d(g^{n_j}x, g^{n_j}z)\le t\cdot {\rm e}^{(\chi_F^++2\varepsilon)(n_j-n_K)}\cdot d(g^{n_K}x,g^{n_K}z).
\end{equation}
Combining \eqref{add1} and \eqref{add2}, we obtain
\begin{align*}
d(g^{n_K}y, f^{n_K}z)&\le t\cdot {\rm e}^{-(\chi_E^--2\varepsilon)(n_j-n_K)}\left(d(g^{n_j}y,g^{n_j}x)+d(g^{n_j}x,g^{n_j}z)\right)\\
&\le t\cdot {\rm e}^{-(\chi_E^--2\varepsilon)(n_j-n_K)}\cdot {\rm diam}(g^{n_j}(B))\\
&+t^2\cdot {\rm e}^{(\chi_F^+-\chi_E^-+4\varepsilon)(n_j-n_K)}\cdot d(g^{n_K}x,g^{n_K}z).
\end{align*}
It follows from $\tau<\chi_E^--2\varepsilon$ that
$
t\cdot {\rm e}^{-(\chi_E^--2\varepsilon)(n_j-n_K)}\cdot {\rm diam}(g^{n_j}(B))\to 0
$
as $j\to +\infty$. Recall the choice of $\varepsilon$ in \eqref{epsilon-g}, we know  
$$
t^2\cdot{\rm e}^{(\chi_F^+-\chi_E^-+4\varepsilon)(n_j-n_K)}\cdot d(g^{n_K}x,g^{n_K}z)\to 0,\quad \textrm{as}~j\to \infty.
$$ 
As a result, $g^{n_K}y=g^{n_K}z$, thus $g^{n_K}y\in W^F_{{\rm loc}}(g^{n_K}x,g)$.
We conclude the result by the arbitrariness of $y\in B$.
\end{proof}

Following the argument as in Theorem \ref{thm:stable-block}, using Proposition \ref{pro:avere} instead of Proposition \ref{pro:averf}, we get the next result. 

\begin{theorem}\label{thm:unstable-block}
Let $g\in {\rm Diff}^1(M)$ preserving an  ergodic measure $\nu$. Assume that $\U$ is an open neighborhood of ${\rm supp}(\nu)$, for which there exists a continuous invariant splitting 
$
T_{\U}M=E\oplus F
$
with 
$$
\chi_E^-(\nu,g)>\max\left\{0, \chi_F^{+}(\nu,g)\right\}.
$$

For any sufficiently small $\varepsilon>0$, there exists $r>0$ such that for any $t\ge 1$, if $\Lambda_t^{\varepsilon}$ is a resonance block w.r.t. $(g,\nu,E\oplus F)$, then for every $x\in \Lambda_t^{\varepsilon}$, there exists a submanifold $W^E_{{\rm loc}}(x,g)$ satisfying
\begin{enumerate}[(1)]
\item $W^E_{\rm loc}(x,g)$ is an embedded $C^1$-disk centered at $x$ of radius $r\cdot t^{-1}$, and tangent to $E$.
\item For every $y,z\in W_{\rm loc}^E(x,g)$, 
$$
d(g^{-k}y,g^{-k}z)\le t \cdot {\rm e}^{-(\chi_E^-(\nu,g)-2\varepsilon)k}\cdot d(y,z),\quad \forall k\ge 1.
$$
\end{enumerate}
\end{theorem}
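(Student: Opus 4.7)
The plan is to mirror the proof of Theorem \ref{thm:stable-block} with the roles of $E$ and $F$ interchanged and Proposition \ref{pro:averf} replaced by Proposition \ref{pro:avere}. First I would fix $\varepsilon>0$ small enough that
\[
0<\varepsilon<\min\Bigl\{\tfrac{1}{10}\chi_E^-(\nu,g),\ \tfrac{1}{10}\bigl(\chi_E^-(\nu,g)-\chi_F^+(\nu,g)\bigr)\Bigr\},
\]
and fix a resonance block $\Lambda_t^\varepsilon=\Lambda_t^\varepsilon(\nu,x_0,g)$. For each $x\in\Lambda_t^\varepsilon$, by definition there exists a sequence $\{n_k\}\subset H_t(\nu,\varepsilon,x_0,g)$ with $g^{n_k}(x_0)\to x$, and for each $j$ I would pick $\ell_j<j$ with $n_j-n_{\ell_j}\ge j$, so that the orbit segment from $g^{n_{\ell_j}}(x_0)$ to $g^{n_j}(x_0)$ grows in length with $j$.

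The heart of the argument is to apply Proposition \ref{pro:avere} to this orbit segment, of length $n=n_j-n_{\ell_j}$, starting at $g^{n_{\ell_j}}(x_0)$. Hypothesis (i) of that proposition (the weak-domination ratio bound $t^2\eta_1^k$) follows from $a^2_{n_{\ell_j}}\le t$, which simultaneously bounds $\prod\|Dg|_F\|$ from above by $t\cdot e^{(\chi_F^++\varepsilon)k}$ and $\prod m(Dg|_E)$ from below by $t^{-1}\cdot e^{(\chi_E^--\varepsilon)k}$ along the forward orbit starting at $g^{n_{\ell_j}}(x_0)$. Hypothesis (ii) (expansion on $E$) follows from $a^1_{n_j}\le t$, which is precisely the required backward expansion bound ending at $g^{n_j}(x_0)$. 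Proposition \ref{pro:avere} then yields a subdisk $D_j$ of radius $r\cdot t^{-1}$ centered at $g^{n_j}(x_0)$, tangent to a cone around $E$ whose width shrinks like $\eta_2^{n_j-n_{\ell_j}}$, and satisfying
\[
d(g^{-k}z_1,g^{-k}z_2)\le t\cdot e^{-(\chi_E^-(\nu,g)-2\varepsilon)k}\,d(z_1,z_2)
\]
for all $z_1,z_2\in D_j$ and $0\le k\le n_j-n_{\ell_j}$.

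Finally, I would invoke Arzel\`a-Ascoli, exactly as in Theorem \ref{thm:stable-block} and detailed in \cite[Claim 2]{CMZ25}, to extract a $C^1$-subsequential limit $D(x)$ of $\{D_j\}$. The radius $r\cdot t^{-1}$ is preserved in the limit; the shrinking cone widths at $g^{n_j}(x_0)\to x$ together with the uniform continuity of the subbundle $E$ force $D(x)$ to be tangent to $E$; and the backward contraction estimate passes to the limit for every $k\ge 1$ because $n_j-n_{\ell_j}\ge j\to\infty$. Setting $W^E_{\mathrm{loc}}(x,g):=D(x)$ completes the construction. I expect no essential obstacle: the main delicate point is the bookkeeping that translates $a^1_{n_j}\le t$ and $a^2_{n_{\ell_j}}\le t$ into the precise hypotheses of Proposition \ref{pro:avere}, which is a mechanical exercise in the time-reversed symmetry of the stable case.
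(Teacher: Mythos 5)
Your proposal is correct and coincides with the paper's own approach: the paper proves Theorem \ref{thm:unstable-block} precisely by remarking that it follows from the argument of Theorem \ref{thm:stable-block} with Proposition \ref{pro:avere} replacing Proposition \ref{pro:averf}. Your bookkeeping --- extracting the weak-domination hypothesis of Proposition \ref{pro:avere} from $a^2_{n_{\ell_j}}\le t$, the backward $E$-expansion hypothesis from $a^1_{n_j}\le t$, and then passing to a $C^1$-limit of the resulting admissible disks at $g^{n_j}(x_0)\to x$ via Arzel\`a--Ascoli --- is the correct time-reversed filling-in of the details that the paper leaves to the reader.
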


As a consequence of Theorem \ref{thm:stable-block} and Theorem \ref{thm:unstable-block}, we obtain

\begin{theorem}\label{thm:un-stableblock}
Let $g\in {\rm Diff}^1(M)$ and $\nu$ be ergodic. Assume that $\U$ is an open neighborhood of ${\rm supp}(\mu)$, for which there exists a continuous invariant splitting 
$
T_{\U}M=E\oplus F
$
with 
$$
\chi_E^-(\nu,g)>0>\chi_F^{+}(\nu,g).
$$

For any sufficiently small $\varepsilon>0$, there exists $r>0$ such that for any $t\ge 1$, if $\Lambda_t^{\varepsilon}$ is a resonance block w.r.t. $(g,\nu,E\oplus F)$, then for every $x\in \Lambda_t^{\varepsilon}$, there exists a submanifold $W^E_{\rm loc}(x,g)$ and $W^F_{{\rm loc}}(x,g)$ satisfying
\begin{enumerate}[(1)]
\item $W^*_{\rm loc}(x,g)$ is an embedded $C^1$-disk centered at $x$ of radius $r\cdot t^{-1}$, and tangent to $*\in E, F$.
\item For every $y,z\in W_{\rm loc}^E(x,g)$, 
$$
d(g^{-k}y,g^{-k}z)\le t \cdot {\rm e}^{-(\chi_E^-(\nu,g)-2\varepsilon)k}\cdot d(y,z),\quad \forall k\ge 1.
$$
\item For every $y,z\in W_{\rm loc}^F(x,g)$, 
$$
d(g^{k}y,g^{k}z)\le t \cdot {\rm e}^{(\chi_F^+(\nu,g)+2\varepsilon)k}\cdot d(y,z),\quad \forall k\ge 1.
$$
\end{enumerate}
\end{theorem}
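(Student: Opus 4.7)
The statement is essentially the union of Theorems \ref{thm:stable-block} and \ref{thm:unstable-block}, so the plan is to reduce it to a simultaneous application of those two results. The hyperbolicity hypothesis
\[
\chi_E^{-}(\nu,g) > 0 > \chi_F^{+}(\nu,g)
\]
immediately implies both
\[
\chi_F^{+}(\nu,g) < \min\{0,\chi_E^{-}(\nu,g)\}
\quad\text{and}\quad
\chi_E^{-}(\nu,g) > \max\{0,\chi_F^{+}(\nu,g)\},
\]
which are exactly the hypotheses of Theorem \ref{thm:stable-block} and Theorem \ref{thm:unstable-block} respectively. Hence both construction theorems apply to the ergodic measure $\nu$ and the same continuous splitting $T_\mathcal{U}M = E\oplus F$.

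Concretely, given $\varepsilon>0$, I would first shrink $\varepsilon$ so that it simultaneously meets the smallness requirements of both theorems; the hypotheses on the exponents allow the single choice
\[
0 < \varepsilon < \tfrac{1}{10}\min\bigl\{-\chi_F^{+}(\nu,g),\, \chi_E^{-}(\nu,g),\, \chi_E^{-}(\nu,g) - \chi_F^{+}(\nu,g)\bigr\}.
\]
For such $\varepsilon$, Theorem \ref{thm:stable-block} yields a radius $r_s = r_s(\varepsilon)>0$ and Theorem \ref{thm:unstable-block} yields a radius $r_u = r_u(\varepsilon)>0$. Set $r = \min\{r_s, r_u\}$; this is the $r$ that appears in the statement.

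Next, fix $t\ge 1$ and consider the resonance block $\Lambda_t^\varepsilon = \Lambda_t^\varepsilon(\nu,x_0,g)$ with respect to $(g,\nu,E\oplus F)$. For each $x\in\Lambda_t^\varepsilon$, Theorem \ref{thm:stable-block} produces an embedded $C^1$-disk $W_{\mathrm{loc}}^F(x,g)$ of radius $r\cdot t^{-1}$ centered at $x$, tangent to $F$, with the claimed forward contraction estimate. Applied to the same $x$, Theorem \ref{thm:unstable-block} produces an embedded $C^1$-disk $W_{\mathrm{loc}}^E(x,g)$ of radius $r\cdot t^{-1}$ centered at $x$, tangent to $E$, with the claimed backward contraction estimate. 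Combining these two outputs gives all three conclusions of Theorem \ref{thm:un-stableblock}.

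There is no real obstacle here; the only mildly delicate point is ensuring that the parameters $\varepsilon$, $r$, and the auxiliary cone-width $\theta_1(t)$ used in Propositions \ref{pro:averf} and \ref{pro:avere} can be chosen uniformly to serve both constructions. This is handled by the single choice of $\varepsilon$ above, by taking the minimum of the two radii, and by shrinking $\theta_1(t)$ to the minimum of the two cone-widths produced by Propositions \ref{pro:averf} and \ref{pro:avere} for the given $t$. No new dynamical input beyond Theorems \ref{thm:stable-block} and \ref{thm:unstable-block} is required.
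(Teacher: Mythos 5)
Your proposal is correct and coincides with the paper's intent: the paper states Theorem \ref{thm:un-stableblock} immediately after the phrase ``As a consequence of Theorem \ref{thm:stable-block} and Theorem \ref{thm:unstable-block}, we obtain,'' offering no separate proof and tacitly using exactly the reduction you spell out. Your handling of the common $\varepsilon$ and the choice $r=\min\{r_s,r_u\}$ is the right way to make the two applications compatible.
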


%

\subsection{Proof of Theorem \ref{main:stable}}\label{stableff}
%
%
%

Denote 
$$
m(Df)=\inf_{x\in M}m(Df_x), \quad \|Df\|=\sup_{x\in M}\|Df_x\|,\quad \|Df^{-1}\|=\sup_{x\in M}\|Df^{-1}_x\|.
$$
By Proposition \ref{pro:ftoblock}, for any $\varepsilon>0$ sufficiently small, there exists $N\in \NN$, and an ergodic component $\nu$ of $\mu$ w.r.t. $f^N$ and a $\nu$-generic point $x_0\in {\rm supp}(\nu)$ such that 
\begin{equation}\label{fpfull}
\nu\left(\bigcup_{t\ge 1}\Lambda_t^{\frac{N\varepsilon}{2}}\left(\nu,x_0,f^N\right)\right)=1.
\end{equation}
Set for simplicity that
$$
g=f^N,\quad
\Lambda_t^{\frac{N\varepsilon}{2}}=\Lambda_t^{\frac{N\varepsilon}{2}}\left(\nu,x_0,g\right), \quad 
R=\bigcup_{t\ge 1}\Lambda_t^{\frac{N\varepsilon}{2}}.
$$
Note also that $\mu$ can be decomposed as follows
$$
\mu=\frac{1}{m}\sum_{i=0}^{m-1}f_{\ast}^i\nu\quad \textrm{for some}~~m\big|N.
$$
Without loss of generality, we assume $m=N$. Then, there exist pairwise disjoint measurable subsets $P_0,\dots, P_{N-1}$ such that $f_{\ast}^i\nu(P_i)=1$ and $f(P_i)=P_{i+1}$ for each $i$ $(\textrm{mod}~ N)$.
Since $\nu(R)=1$ by \eqref{fpfull}, it follows from Lemma \ref{lem:temp} that there exists a $g$-invariant full $\nu$-measure subset ${R}^{\nu}$ of $R\cap P_0$ such that 
\begin{equation}\label{temt}
	\lim_{k\to \pm\infty}\frac{1}{|k|}\log T(f^{kN}y)=0,\quad \forall y\in R^{\nu}.
\end{equation}
Let 
$$
\cR=\cR(\varepsilon)=\bigcup_{i=0}^{N-1}f^i(R^{\nu}).
$$
Thus, we have $\mu(\cR)=1$ by definition. 
It suffices to show the existence of stable manifolds of $f$ at $x\in \cR$.

By the assumption on Lyapunov exponents, we have
$$
\chi_F^+(\nu,g)<\min\left\{0, \chi_E^{-}(\nu,g)\right\},
$$
using the facts $\chi_F^+(\mu,g)=N\cdot \chi_F^+(\nu,f)$ and $\chi_E^-(\mu,g)=N\cdot \chi_E^-(\nu,f)$. Thus, one can apply Theorem \ref{thm:stable-block} to $g$, $\nu$ and $\frac{N\varepsilon}{2}$. Hence, there exists $r>0$ such that for any $t\ge 1$, each $x\in \Lambda_t^{\frac{N\varepsilon}{2}}$ admits a local stable manifold $W^F_{\rm loc}(x,g)$ of size $r\cdot t^{-1}$. Moreover, for every $y,z\in W_{\rm loc}^F(x,g)$, 
\begin{equation}\label{gstable}
d(g^ky,g^kz)\le t \cdot {\rm e}^{(\chi_F^+(\nu,g)+N\cdot\varepsilon)k}\cdot d(y,z),\quad \forall k\ge 1.
\end{equation}
Since $\{P_i\}_{0\le i \le N-1}$ are pairwise disjoint, for each $x\in \cR$, there exists a unique $0\le \kappa(x)\le N-1$ such that $f^{-\kappa(x)}(x)\in R\cap P_0$.
Thus, we can define
$$
\delta(x)=m(Df)^N\cdot r\cdot {\rm e}^{-\varepsilon}\cdot T(f^{-\kappa(x)}(x))^{-1},\quad \forall x\in \cR,
$$
and
$$
A(x)=\|Df\|^{2N}\cdot \|Df^{-1}\|^N\cdot {\rm e}^{\varepsilon}\cdot {\rm e}^{-(\chi_F^+(\mu,f)+\varepsilon)N}\cdot T(f^{-\kappa(x)}(x)),\quad \forall x\in \cR.
$$
\smallskip

For every $x\in \cR$, note that $f^{-\kappa(x)}(x)\in \Lambda_t^{\frac{N\varepsilon}{2}}$ for $t=T(f^{-\kappa(x)}(x))\cdot {\rm e}^{\varepsilon}$ by the definition of $T$. Hence,
$$
{\rm radius}\left(W^F_{\rm loc}(f^{-\kappa(x)}(x),g)\right)\ge r\cdot t^{-1}= r\cdot \left(T(f^{-\kappa(x)}(x))\cdot {\rm e}^{\varepsilon}\right)^{-1}.
$$
As a result, we obtain
\begin{align*}
{\rm radius}\left(f^{\kappa(x)}(W^F_{\rm loc}(f^{-\kappa(x)}(x),g))\right)&\ge m(Df^{\kappa(x)})\cdot {\rm radius}\left(W^F_{\rm loc}(f^{-\kappa(x)}(x),g))\right)\\
&\ge m(Df)^N\cdot r\cdot \left(T(f^{-\kappa(x)}(x))\cdot {\rm e}^{\varepsilon}\right)^{-1}\\
&= \delta(x).
\end{align*}
Define
\begin{equation}\label{def:staf}
W^F_{\rm loc}(x,f)=f^{\kappa(x)}(W^F_{\rm loc}(f^{-\kappa(x)}(x),g)).
\end{equation}
Then it contains a $C^1$-disk centered at $x$ of radius $\delta(x)$. This verifies Item \eqref{sidf} of Theorem \ref{main:stable}.

\medskip
Next, we show Item \eqref{contsf} of Theorem \ref{main:stable}. Fix any $x\in \cR$.
For any $y,z\in W^F_{\rm loc}(x,f)$,  since $W^F_{\rm loc}(x,f)= f^{\kappa(x)}\left(W^F_{\rm loc}(f^{-\kappa(x)}(x),g)\right)$, we have $f^{-\kappa(x)}y, f^{-\kappa(x)}z\in W^F_{\rm loc}(f^{-\kappa(x)}(x),g)$.  For any $n\ge 1$, let $n=kN+\ell$ for some $k\ge 0$ and $0\le \ell\le N-1$. Note that $f^{-\kappa(x)}(x)\in \Lambda_t^{\frac{N\varepsilon}{2}}$ for $t=T(f^{-\kappa(x)}(x))\cdot {\rm e}^{\varepsilon}$. By applying \eqref{gstable}, we obtain
\begin{align*}
&d(f^ny,f^nz)\\
&=d\left(f^{kN+\ell+\kappa(x)}\circ f^{-\kappa(x)}(y), f^{kN+\ell+\kappa(x)}\circ f^{-\kappa(x)}(z)\right)\\
&=d\left(f^{\ell+\kappa(x)}\circ g^k(f^{-\kappa(x)}(y)), f^{\ell+\kappa(x)}\circ g^k(f^{-\kappa(x)}(z))\right)\\
&\le \|Df\|^{2N}\cdot d\left(g^k(f^{-\kappa(x)}(y)), g^k(f^{-\kappa(x)}(z))\right)\\
&\le \|Df\|^{2N}\cdot t\cdot {\rm e}^{(\chi_F^+(\nu,g)+N\cdot \varepsilon)k}\cdot d(f^{-\kappa(x)}(y),f^{-\kappa(x)}(z))\\
&\le \|Df\|^{2N}\cdot T(f^{-\kappa(x)}(x))\cdot {\rm e}^{\varepsilon}\cdot {\rm e}^{(\chi_F^+(\mu,f)+ \varepsilon)kN}\cdot \|Df^{-1}\|^N\cdot d(y,z)\\
& = \|Df\|^{2N}\cdot \|Df^{-1}\|^N \cdot T(f^{-\kappa(x)}(x))\cdot {\rm e}^{\varepsilon}\cdot {\rm e}^{-(\chi_F^+(\mu,f)+\varepsilon)\ell}\cdot {\rm e}^{\left(\chi_F^+(\mu,f)+\varepsilon\right)n}d(y,z)\\
&\le A(x)\cdot {\rm e}^{(\chi_F^+(\mu,f)+\varepsilon)n}\cdot d(y,z).
\end{align*}

%

Now we check \eqref{subexp}. We just show the sub-exponentiality of the sequence$\{\delta(f^nx)\}_{n\in \ZZ}$. The result on $\{A(f^nx)\}_{n\in \ZZ}$ can be deduced with the same argument. 
We first show the convergence when $n\to +\infty$.
Given any $x\in \cR$, for every $-\kappa(x)\le \ell <N-\kappa(x)$ and every $k\ge 1$, we have
$$
f^{-(\kappa(x)+\ell)}\circ f^{kN+\ell}(x)=f^{kN}\circ f^{-\kappa(x)}(x)\in R^{\nu}.
$$
%
Observe that $0\le \kappa(x)+\ell< N$, with the uniqueness of $\kappa$ we deduce from above that
$\kappa(f^{kN+\ell})=\kappa(x)+\ell$. Consequently,
\begin{align*}
\frac{\log \delta(f^{kN+\ell}x)}{kN+\ell}&=\frac{\log\left(m(Df)^N\cdot r\cdot {\rm e}^{-\varepsilon}\cdot \left(T(f^{-(\kappa(x)+\ell)}\circ f^{kN+\ell}(x))\right)^{-1}\right)}{kN+\ell}\\
&=\frac{\log\left(m(Df)^N\cdot r\cdot {\rm e}^{-\varepsilon}\right)}{kN+\ell}+\frac{\log T\left(f^{kN}\circ f^{-\kappa(x)}(x)\right)^{-1}}{kN+\ell}.
\end{align*}
Since $m(Df)^N\cdot r\cdot {\rm e}^{-\varepsilon}$ is bounded from above, we know 
$$
\lim_{k\to +\infty}\frac{\log\left(m(Df)^N\cdot r\cdot {\rm e}^{-\varepsilon}\right)}{kN+\ell}=0.
$$
According to \eqref{temt}, it follows that 
$$
\lim_{k\to +\infty}\frac{\log T\left(f^{kN}\circ f^{-\kappa(x)}(x)\right)^{-1}}{kN+\ell}=0.
$$ 
Therefore, using the arbitrariness of $-\kappa(x)\le \ell <N-\kappa(x)$ we get
$$
\lim_{n\to +\infty}\frac{1}{n}\log \delta(f^nx)=0,\quad x\in \cR.
$$
To show the convergence when $n\to -\infty$, it suffices to note that 
$$
f^{-(\kappa(x)+\ell)}\circ f^{-kN+\ell}(x)=f^{-kN}\circ f^{-\kappa(x)}(x)\in R^{\nu}, \quad \forall k\ge 1, \forall -\kappa(x)\le \ell <N-\kappa(x).
$$
Together with the convergence \eqref{temt} when $n\to -\infty$, we deduce that 
$$
\lim_{n\to -\infty}\frac{1}{|n|}\log \delta(f^nx)=0,\quad x\in \cR.
$$

For every $x\in \cR$, let
$$
W^F(x,f)=\bigcup_{n\ge 0}f^{-n}\left(W^F_{\rm loc}(f^nx,f)\right).
$$
We show first that $W^F(x,f)$ is an injectively immersed submanifold. To this end, set 
$$
\Delta_p=\{x\in \cR: \delta(x)\ge 1/p\},\quad p\in \NN.
$$
Then, we get $\cR=\bigcup_{p\in \NN}\Delta_p$. By the Poincar\'e recurrence theorem (see e.g. \cite[Theorem 1.4]{wa82}), we can assume without loss of generality that each point of $\Delta_p$, $p\in \NN$ is recurrent. That is, for every $x\in \Delta_p$, there exist infinitely many $n_k$ with $f^{n_k}(x)\in \Delta_p$ for every $k\in \NN$. For every $x\in\cR$, we choose $p\in \NN$ satisfying $x\in \Delta_p$. Fix any $m\in \NN$. As the forward iterates of any local stable manifold $W^F_{\rm loc}(f^ix,f)$ shrink in size, and by the definition of $\Delta_p$, we can choose $n$ sufficiently large so that $f^n(x)\in \Delta_p$ and
$$
f^{n-i}\left(W^F_{\rm loc}(f^i(x),f)\right)\subset W^F_{1/p}(f^{n}(x),f),\quad 0\le i \le m.
$$
This implies that 
$$
\bigcup_{0\le i \le m}f^{-i}\left(W^F_{\rm loc}(f^i(x),f)\right)\subset f^{n}\left(W^F_{1/p}(f^n(x),f)\right). 
$$
Observe that the latter is an embedded submanifold, this shows that $
W^F(x,f)=\bigcup_{n\ge 0}f^{-n}\left(W^F_{\rm loc}(f^nx,f)\right)
$ is an injectively immersed submanifold.

\medskip
To end the proof of this theorem, it suffices to show the following result.

\begin{proposition}Under the assumption of Theorem \ref{main:stable}, we have 
$$
W^F(x,f)=\left\{y\in M: \limsup_{n\to +\infty}\frac{1}{n}\log d(f^nx, f^ny)\le \chi_F^{+}(\mu,f)\right\},
$$
at $\mu$-almost every point $x$.
\end{proposition}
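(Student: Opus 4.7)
My plan is to prove the two inclusions separately. The inclusion $W^F(x,f)\subseteq\{y:\limsup_{n\to+\infty}\tfrac{1}{n}\log d(f^nx,f^ny)\le\chi_F^+(\mu,f)\}$ is a direct consequence of item \eqref{contsf} of Theorem \ref{main:stable}: if $y\in W^F(x,f)$ then $f^{n_0}(y)\in W^F_{\rm loc}(f^{n_0}(x),f)$ for some $n_0\ge 0$, and applying the contraction estimate at $f^{n_0}(x)$ gives $d(f^{n_0+n}x,f^{n_0+n}y)\le A(f^{n_0}x)\cdot e^{(\chi_F^+(\mu,f)+\varepsilon)n}\cdot d(f^{n_0}x,f^{n_0}y)$ for every $n\ge 1$; taking $\limsup$ and letting $\varepsilon\to 0$ finishes this direction.

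For the reverse inclusion I will invoke Proposition~\ref{pro:forstableff}, applied with $g=f^N$ and $\nu$ as constructed in the proof of Theorem~\ref{main:stable}. Fix $x\in\cR$ (after restricting to an $f$-invariant $\mu$-full measure subset on which the conclusion of Proposition~\ref{pro:forstableff} is valid at every orbit point), and let $y$ satisfy $\limsup\tfrac{1}{n}\log d(f^nx,f^ny)\le\chi_F^+(\mu,f)$. Setting $x'=f^{-\kappa(x)}(x)\in R^{\nu}$ and $y'=f^{-\kappa(x)}(y)$, the same bound holds for $(x',y')$, so, using $\chi_F^+(\nu,g)=N\chi_F^+(\mu,f)$, for every $\delta>0$ one gets
$$
\operatorname{diam}\bigl(g^n(\{x',y'\})\bigr)=d(f^{Nn}x',f^{Nn}y')\le e^{(\chi_F^+(\nu,g)+N\delta)n}\qquad\text{for all large }n.
$$
Since $\chi_F^+(\nu,g)<\min\{0,\chi_E^-(\nu,g)\}$, I may choose $\tau$ and shrink $\varepsilon,\delta$ so that $\chi_F^+(\nu,g)+N\delta<\tau<\min\{0,\chi_E^-(\nu,g)-N\varepsilon\}$; this matches the hypothesis $\tau<\min\{0,\chi_E^-(\nu,g)-2(N\varepsilon/2)\}$ of Proposition~\ref{pro:forstableff} for the resonance block $\Lambda_t^{N\varepsilon/2}$ containing $x'$. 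The diameter-decay condition $e^{-\tau n}\operatorname{diam}(g^nB)\to 0$ then holds for $B=\{x',y'\}$, and the proposition produces some $n_K\in\NN$ with $g^{n_K}(y')\in W^F_{\rm loc}(g^{n_K}(x'),g)$.

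To close, I use the cyclic structure of $P_0,\ldots,P_{N-1}$: since $x'\in P_0$, the point $g^{n_K}(x')=f^{Nn_K-\kappa(x)}(x)$ also lies in $P_0$, so $\kappa(g^{n_K}(x'))=0$ and the definition \eqref{def:staf} identifies $W^F_{\rm loc}(g^{n_K}(x'),f)$ with $W^F_{\rm loc}(g^{n_K}(x'),g)$. Enlarging $n_K$ so that $Nn_K\ge\kappa(x)$ and pulling back by $f^{Nn_K-\kappa(x)}$ yields
$$
y\in f^{-(Nn_K-\kappa(x))}\bigl(W^F_{\rm loc}(f^{Nn_K-\kappa(x)}(x),f)\bigr)\subset W^F(x,f).
$$
The main obstacle I anticipate is the coordinated choice of $(\tau,\varepsilon,\delta)$ needed to sandwich $\tau$ between the exponent bounds for $g$, combined with the bookkeeping between $f$ and $g=f^N$ (the shift by $\kappa(x)$ and the identification of stable manifolds in \eqref{def:staf}); once the structure set up in the proof of Theorem~\ref{main:stable} is in hand, however, these issues are purely technical.
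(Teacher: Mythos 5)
Your reverse inclusion $\{y:\limsup_{n}\tfrac{1}{n}\log d(f^nx,f^ny)\le\chi_F^+(\mu,f)\}\subseteq W^F(x,f)$ is essentially the paper's argument, and your simplification of applying Proposition~\ref{pro:forstableff} directly to the two-point set $\{x',y'\}$ (rather than to the exhaustion $L_m$ used in the paper) is legitimate; the $f$--versus--$g=f^N$ bookkeeping through $\kappa$ and the identification in \eqref{def:staf} is also handled correctly. One phrasing issue: you should only shrink $\delta$, not $\varepsilon$, since $\varepsilon$ was fixed in the statement of Theorem~\ref{main:stable} and determines the resonance block and the local stable manifold you are trying to land in.

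The forward inclusion $W^F(x,f)\subseteq\{y:\limsup\le\chi_F^+(\mu,f)\}$, however, has a genuine gap. The contraction estimate from item~\eqref{contsf} only gives $\limsup_{n}\tfrac{1}{n}\log d(f^nx,f^ny)\le\chi_F^+(\mu,f)+\varepsilon$ for the \emph{fixed} $\varepsilon$ of Theorem~\ref{main:stable}, and the local stable manifolds $W^F_{\rm loc}(\cdot,f)$ --- hence $W^F(x,f)$ itself --- are built from that same $\varepsilon$. ``Letting $\varepsilon\to 0$'' changes which set $W^F(x,f)$ denotes, so for a point $y$ fixed in the $\varepsilon$-stable manifold the inequality does not improve. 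What is actually needed is that the global stable manifold is independent of $\varepsilon$: this is exactly what the paper proves as equation~\eqref{eqsjl}, namely $W^{F,j}(x,f)=W^{F,\ell}(x,f)$ for all $j,\ell\ge j_0$ (with $\varepsilon_j=1/j$). That independence is established by a \emph{second} application of Proposition~\ref{pro:forstableff}, this time to the sets $B_m=f^{-m-i}\bigl(W^{F,j}_{\rm loc}(f^m(x),f)\bigr)$, showing that each $\varepsilon_j$-leaf is swallowed by the $\varepsilon_\ell$-global manifold. Only then can one write $\limsup\le\chi_F^+(\mu,f)+\varepsilon_\ell$ for every $\ell\ge j_0$ and pass to the limit. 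Your plan omits this step, so the forward inclusion as written does not follow; it would need to be supplemented by an argument for the $\varepsilon$-independence of $W^F(x,f)$, which is the real content of this half of the proposition.
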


\begin{proof}
Let $\varepsilon_j=1/j, j\in \NN$. By the argument above we know that there exists $j_0\in \NN$ large enough so that 
\begin{equation}\label{joc}
\varepsilon_{j_0}=\frac{1}{j_0}<\min\left\{-\frac{\chi_F^-(\mu,f)}{10},\frac{\chi_E^-(\mu,f)-\chi_F^+(\mu,f)}{10}\right\},
\end{equation}
and exhibiting the following property: For every $j\ge j_0$, there exists $N_j\in \NN$, and an ergodic component $\nu_j$ of $\mu$ w.r.t. $g_j:=f^{N_j}$ such that 
$$
\nu_j\left(\bigcup_{t\ge 1}\Lambda_t^{\frac{N_j\varepsilon_j}{2}}\right)=1,
$$
where $\Lambda_t^{\frac{N_j\varepsilon_j}{2}}, t\ge 1$ are resonance blocks with respect to $(g_j,\nu_j, E\oplus F)$. Without loss of generality, we may assume that every point in each resonance bock is recurrent. 
There exists a full $\mu$-measure subset $\cR_j=\cR(\varepsilon_j)$ as a subset of $\bigcup_{i=0}^{N_j-1}f^i\big(\bigcup_{t\ge 1}\Lambda_t^{\frac{N_j\varepsilon_j}{2}}\big)$, with a positive Borel function $A_j(x)$ such that for every $x\in \cR_j$, there is a local stable manifold $W^{F,j}_{\rm loc}(x,f)$, satisfying
$$
d(f^ny,f^nz)\le A_j(x)\cdot {\rm e}^{(\chi_F^+(\mu,f)+\varepsilon_j)n}\cdot d(y,z),\quad \forall n\ge 1.
$$
when $y,z\in W^{F,j}_{\rm loc}(x,f)$. 

Now we take 
$$
R_f=\bigcap_{j\ge j_0}{\cR_j}.
$$
Hence $\mu(R_f)=1$ by construction.
For every $x\in R_f$, we consider
$$
W^{F,j}(x,f)=\bigcup_{n\ge 0}f^{-n}\left(W^{F,j}_{\rm loc}(f^nx,f)\right),\quad j\ge j_0.
$$
and
$$
W(x,f)=\left\{y\in M: \limsup_{n\to +\infty}\frac{1}{n}\log d(f^nx, f^ny)\le \chi_F^{+}(\mu,f)\right\}.
$$

We show first that $W(x,f)\subset W^{F,j}(x,f)$ for every $j\ge j_0$ at $\mu$-almost every point $x$.    For any $x\in R_f$, we have $x\in \cR_j$ for each $j\ge j_0$. So there exists $0\le i \le N_j-1$ such that $f^{-i}(x)\in R^{\nu_j}$. Thus, there exists $t\ge 1$ such that $f^{-i}(x)\in \Lambda_t^{\frac{N_j\varepsilon_j}{2}}$.
Let
$$
0<\alpha<\chi_E^-(\mu,f)-\chi_F^+(\mu,f)-\varepsilon_j,
$$
and
$$
L_m=\left\{y\in M: d(f^nx,f^ny)\cdot {\rm e}^{-(\chi_F^+(\mu,f)+\alpha)n}<{\rm e}^{-\frac{1}{2}n\alpha},\quad \forall n\ge m\right\},\quad m\in \NN.
$$
By definition, we get $W(x,f)\subset \bigcup_{m\ge 0}L_m$ and 
\[\lim_{n\to +\infty}{\rm e}^{-(\chi_F^+(\mu,f)+\alpha)n}\cdot {\rm diam}(f^n(L_m))=0,\quad \forall m\in \NN. \]
It follows that 
\begin{align*}
& \lim_{n\to +\infty}{\rm e}^{-(\chi_F^+(\nu_j,g_j)+N_j\alpha)n}\cdot {\rm diam}\left(g_j^n(f^{-i}L_m)\right)\\
=& \lim_{n\to +\infty}{\rm e}^{-(\chi_F^+(\mu,f)+\alpha)nN_j}\cdot {\rm diam}(f^{nN_j-i}(L_m))\\
=& 0.
\end{align*} 
Fix any $m\in \NN$. Since $f^{-i}x\in f^{-i}L_m\cap\Lambda_t^{\frac{N_j\varepsilon_j}{2}}$ and $\tau:=\chi_F^+(\nu_j,g_j)+N_j\alpha<\chi_E^-(\nu_j,g_j)-2\cdot \frac{N_j\varepsilon_j}{2}$, we can apply Proposition \ref{pro:forstableff} to $g_j$ and $f^{-i}L_m$, which gives some $n_k\in \NN$ such that 
$$
f^{-i}L_m\subset g_j^{-n_k}\left(W^{F,j}_{\rm loc}(g_j^{n_k}(f^{-i}x),g_j)\right).
$$
Observe from the definition in \eqref{def:staf} that $f^i\left(W^{F,j}_{\rm loc}(g_j^{n_k}(f^{-i}x),g_j)\right)=W^{F,j}_{\rm loc}(g_j^{n_k}(x),f)$. Thus
$$
L_m\subset f^i\circ g_j^{-n_k}\left(W^{F,j}_{\rm loc}(g_j^{n_k}(f^{-i}x),g_j)\right)=g_j^{-n_k}\left(W^{F,j}_{\rm loc}(g_j^{n_k}(x),f)\right)\subset W^{F,j}(x,f).
$$
Hence, we get $W(x,f)\subset W^{F,j}(x,f)$ for every $x\in R_f$ and $j\ge j_0$.

Now we show the other direction for $x\in R_f$. This will be done after showing 
\begin{equation}\label{eqsjl}
W^{F,j}(x,f)=W^{F,\ell}(x,f),\quad \forall j\neq \ell \ge j_0.
\end{equation}
Indeed, we can deduce from definition directly that 
$$
W^{F,j}(x,f)\subset \left\{y\in M: \limsup_{n\to +\infty}\frac{1}{n}\log d(f^nx, f^ny)\le \chi_F^{+}(\mu,f)+\varepsilon_j\right\}. 
$$
Following from \eqref{eqsjl} we see that for every $y\in W^{F,j}(x,f)$, we have
$$
\limsup_{n\to +\infty}\frac{1}{n}\log d(f^nx, f^ny)\le \chi_F^{+}(\mu,f)+\varepsilon_{\ell},\quad \forall \ell \ge j_0.
$$
By passing to the limit as $\ell \to +\infty$, we obtain
$$
\limsup_{n\to +\infty}\frac{1}{n}\log d(f^nx, f^ny)\le \chi_F^{+}(\mu,f),
$$
which implies $y \in W(x,f)$, and hence $W^{F,j}(x,f) \subset W(x,f)$.  Now we show \eqref{eqsjl} as follows. Fix any $x\in R_f$. Then $x\in \cR_{\ell}$ and thus there exists $0\le i \le N_{\ell}$ satisfying $f^{-i}x\in \Lambda_t^{\frac{N_{\ell}\varepsilon_{\ell}}{2}}$ for some $t\ge 1$. Let 
$$
B_m=f^{-m-i}\left(W^{F,j}_{\rm loc}(f^m(x),f)\right),\quad m\in \NN.
$$
The contracting property of $W^{F,j}_{\mathrm{loc}}(f^m(x),f)$ implies that for every $m \in \mathbb{N}$,
$$
\lim_{n\to +\infty}{\rm e}^{-\left(\chi_F^+(\mu,f)+2\varepsilon_{j}\right)n}\cdot {\rm diam}\left(f^n(B_m)\right)=0.
$$
In particular, upon replacing $n$ by $N_\ell n$, we obtain
$$
\lim_{n\to +\infty}{\rm e}^{-\left(\chi_F^+(\nu_{\ell},g_{\ell})+2\varepsilon_j\cdot N_{\ell}\right)n}\cdot {\rm diam}\left(g_\ell^n(B_m)\right)=0.
$$
By the choice of $j_0$, we know 
$
\chi_F^+(\nu_{\ell},g_{\ell})+2\varepsilon_j\cdot N_{\ell}<\chi_E^-(\nu_{\ell},g_{\ell})-\varepsilon_{\ell}\cdot N_{\ell}.
$
Therefore, by applying Proposition \ref{pro:forstableff} for $g_{\ell}$ and $B_m$ we conclude that there exists $n_k$ such that 
$$
B_m\subset g_{\ell}^{-n_k}(W^{F,j}_{\rm loc}(g_{\ell}^{n_k}(f^{-i}x),g_{\ell})).
$$
Consequently, we get for every $m\in \NN$ that
\begin{align*}
f^{-m}\left(W^{F,j}_{\rm loc}(f^m(x),f)\right)&=f^i(B_m)\subset f^i\circ g_{\ell}^{-n_k}\left(W^{F,j}_{\rm loc}(g_{\ell}^{n_k}(f^{-i}x),g_{\ell})\right)\\
&=g_{\ell}^{-n_k}\left(W^{F,j}_{\rm loc}(g_{\ell}^{n_k}(x),f)\right)\\
&\subset W^{F,{\ell}}(x,f).
\end{align*}
Since 
$$
W^{F,j}(x,f)=\bigcup_{m\ge 0}f^{-m}\left(W^{F,j}_{\rm loc}(f^m(x),f)\right),
$$
the above argument yields 
$W^{F,j}(x,f)\subset W^{F,\ell}(x,f)$. Therefore, the equality $W^{F,j}(x,f)= W^{F,\ell}(x,f)$ follows from the arbitrariness of $j$ and $\ell$. 
\end{proof}

The proof of Theorem \ref{main:stable} is now complete.

\section{Shadowing and closing lemmas}

\subsection{Dynamics on admissible manifolds}

Throughout this subsection, we assume that $g$ is a $C^1$ diffeomorphism on $M$. Let $\nu$ be a $g$-ergodic measure and $\U$ be an open neighborhood of ${\rm supp}(\nu)$, for which there exists a continuous invariant splitting 
$
T_{\U}M=E\oplus F.
$

\begin{definition}
Given $x,y\in \U$, and positive constants $\theta,r,h$, 
$D(y)$ is called an {\em $(F,\theta,r,h)$-admissible manifold near $x$}, if 
\begin{itemize}
\item $D(y)$ is a $C^1$ submanifold tangent to $\cC_{\theta}^F$.
\item $D(y)$  contains a disk of radius $r$ centered at  $y$ and $d(x,y)\le h$.
\end{itemize} 
Likewise, one can define $(E,\theta,r,h)$-admissible manifold near $x$.
\end{definition}

\begin{proposition}\label{pro:adF1}
	Let $g\in {\rm Diff}^1(M)$ preserving an ergodic measure  $\nu$. Assume that $\U$ is an open neighborhood of ${\rm supp}(\mu)$, for which there exists a continuous invariant splitting 
	$
	T_{\U}M=E\oplus F
	$
	with 
	$$
	\chi_E^-(\nu,g)>0>\chi_F^{+}(\nu,g).
	$$
	Then for every sufficiently small $\varepsilon>0$, there exist $r>r_1>0$ such that for any $t\ge 1$, one can find $h_1=h_1(t)>0$, $N_1=N_1(t)\ge 1$, and $\tau=\tau(t)\in (0,1)$ with the following property.
	
	For any $0<h\le h_1$ and any resonance block $\Lambda_t^{\varepsilon}$ w.r.t. $(g,\nu,E\oplus F)$, if $x,g^n(x)\in \Lambda^{\varepsilon}_t$ for some $n\ge N_1$, then the following holds:
	Let $D(z)$ be an $(F,\theta_1,r_1\cdot t^{-1},h)$-admissible manifold near $g^n(x)$, and let $g^n(w)$ be the transverse intersection of $D(z)$ with $W_{r_1\cdot t^{-1}}^E(g^nx,g)$. Define
	\begin{align*}
		D_n&=D(z),\\
		D_k&=g^{-1}(D_{k+1})\cap \overline{B}(g^kx,r),\quad \forall 0\le k \le n-1.
	\end{align*}
	If $D(w)\subset D_0$ is a disk of radius $r_1\cdot t^{-1}$ around $w$, then $D(w)$ is an $(F,\theta_1,r_1\cdot t^{-1}, \tau \cdot h)$-admissible manifold near $x$, and satisfies
	\begin{equation}\label{item:contraction1}
		d(g^kz_1, g^kz_2)\le t\cdot {\rm e}^{(\chi_F^+(\nu,g)+2\varepsilon)k}d(z_1,z_2),\quad \forall z_1,z_2\in D(w),\quad \forall 0\le k \le n.
	\end{equation}
\end{proposition}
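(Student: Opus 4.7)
The plan is to verify the hypotheses of Proposition~\ref{pro:averf} on the segment $\{x,\dots,g^n x\}$ from the resonance-block bounds at $x$ and $g^n(x)$, transplant the resulting cone-tangency and forward contraction onto $D(w)$, and locate $w$ itself through the backward contraction of the local unstable disk at $g^n(x)$. More precisely, Corollary~\ref{cor:lambda-bl} applied at $g^n(x)\in\Lambda_t^\varepsilon$ in the backward direction gives, for $1\le k\le n$,
\[
\prod_{i=n-k}^{n-1}\frac{\|Dg|_{F(g^i x)}\|}{m(Dg|_{E(g^i x)})}\le t^{2}\eta_{1}^{k},
\]
which is exactly hypothesis~(i) of Proposition~\ref{pro:averf}, and the same corollary applied at $x$ forward supplies hypothesis~(ii). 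Fix $r=r(\varepsilon)$ and $\theta_1=\theta_1(t)$ from Proposition~\ref{pro:averf}, pick $r_1\in(0,r/3)$, and choose $N_1(t)$ so that $t^{2}\eta_2^{N_1}\le 1$. Then, provided $h\le h_1(t)$ is small enough for $D(z)$ to intersect $\overline{B}(g^n x,r)$ in the required way, the geometric part of the proof of Proposition~\ref{pro:averf} yields $g^k(D_0)\subset\overline{B}(g^k x,r)$ together with the tangency of $D_k$ to $\cC^F_{t^2\eta_2^{n-k}\theta_1}$ for every $0\le k\le n$; at $k=0$ this forces $D_0$, and hence $D(w)\subset D_0$, to be tangent to $\cC^F_{\theta_1}$.

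Next, Theorem~\ref{thm:un-stableblock} applied at $g^n(x)\in\Lambda_t^\varepsilon$ provides the local unstable disk $W^E_{r_1 t^{-1}}(g^n x,g)$ tangent to $E$ with backward contraction rate $t\cdot e^{-(\chi_E^--2\varepsilon)k}$. For $\theta_1$ small enough that $\cC^F_{\theta_1}$ is uniformly transverse to $E$ on a neighbourhood of $\supp(\nu)$, and $h_1(t)$ small enough, the transverse intersection $g^n(w)$ of $D(z)$ and $W^E_{r_1 t^{-1}}(g^n x,g)$ exists, is unique, and satisfies $d(g^n w,g^n x)\le K(\theta_1)\cdot h$, with $K(\theta_1)$ depending only on the transversality angle. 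Iterating the backward contraction along $W^E$ then gives
\[
d(w,x)\le t\cdot K(\theta_1)\cdot h\cdot e^{-(\chi_E^--2\varepsilon)n}.
\]
Enlarging $N_1(t)$ if necessary, we set $\tau(t):=t\cdot K(\theta_1)\cdot e^{-(\chi_E^--2\varepsilon)N_1}<1$, whence $d(w,x)\le\tau(t)\cdot h$ for every $n\ge N_1$.

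It remains to assemble the conclusions. The disk $D(w)\subset D_0$ inherits its tangency to $\cC^F_{\theta_1}$ from the first step, has radius $r_1 t^{-1}$ around $w$ by hypothesis, and sits within $\tau h$ of $x$, so it is an $(F,\theta_1,r_1 t^{-1},\tau h)$-admissible manifold near $x$. The contraction estimate~\eqref{item:contraction1} is obtained by the same distortion argument as in Proposition~\ref{pro:averf}: for $z_1,z_2\in D(w)$ the forward iterates lie in $g^k(D_0)\subset\overline{B}(g^k x,r)$, and combining the cone-tangency with the forward bound $\prod_{i=0}^{k-1}\|Dg|_{F(g^i x)}\|\le t\cdot e^{(\chi_F^++\varepsilon)k}$ from Corollary~\ref{cor:lambda-bl} at $x$ yields the rate $t\cdot e^{(\chi_F^++2\varepsilon)k}$ for every $0\le k\le n$. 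The main obstacle is the joint calibration of the constants $r,r_1,\theta_1,h_1,N_1,\tau$: the intersection $g^n(w)$ must be uniformly quantifiable in $h$, the disk $D(w)$ of radius $r_1 t^{-1}$ around $w$ must fit inside $D_0$ (forcing $r_1 t^{-1}+\tau h\le r t^{-1}$), and the unstable contraction must beat the admissibility scale by a factor strictly less than one; once this hierarchy is fixed the dynamical content reduces entirely to Corollary~\ref{cor:lambda-bl} and Theorem~\ref{thm:un-stableblock}.
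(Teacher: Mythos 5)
Your outline correctly identifies the main tools --- Corollary~\ref{cor:lambda-bl} at $g^n x$ and at $x$ for the average-domination and forward-contraction hypotheses, Theorem~\ref{thm:un-stableblock} for the local unstable disk at $g^n x$, the cone estimates behind Proposition~\ref{pro:averf}, and the choice of $\tau$ driven by the unstable contraction rate along $W^E$. But the step you file under ``calibration of constants'' --- that $D_0$ actually contains a disk of radius $r_1\cdot t^{-1}$ around $w$ --- is the substantive claim in the proof, and your constraint $r_1 t^{-1}+\tau h\le r t^{-1}$ does not establish it. That inequality only says a hypothetical disk of that radius around $w$ would fit inside $\overline{B}(x,r)$; it says nothing about whether the iterated pull-backs $D_k=g^{-1}(D_{k+1})\cap\overline{B}(g^k x,r)$ actually extend that far from $w$, or whether the successive intersections with the balls cut $D_0$ off short.

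The paper proves this by contradiction (the ``Claim'' inside the proof of Proposition~\ref{pro:adF1}): if $a\in\partial D_0$ had $d_{D_0}(w,a)<r_1\cdot t^{-1}$, then the forward contraction along the $F$-cone (Corollary~\ref{cor:lambda-bl} together with the cone filtration of Lemma~\ref{lem:averf} and the distortion bound~\eqref{esteps}) gives $d_{g^kD_0}(g^kw,g^ka)<r_1 e^{(\chi_F^++3\varepsilon/2)k}$, so in particular $g^ka$ stays in $\overline{B}(g^kx,r)$ for all $0\le k\le n$, forcing $g^na\in\partial D(z)$ and hence $d_{D(z)}(z,g^na)=r_1\cdot t^{-1}$. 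On the other hand, for $n\ge N_1$ the same estimate yields $d_{g^nD_0}(g^nw,g^na)<\frac{1}{2}r_1\cdot t^{-1}$, while $d_{D(z)}(z,g^nw)\le e^{\varepsilon/4}C_0h_1<\frac{1}{2}r_1\cdot t^{-1}$, contradicting $d_{D(z)}(z,g^na)=r_1\cdot t^{-1}$. This is a genuine dynamical argument, and the constant hierarchy it needs ($N_1$ with $r_1 e^{(\chi_F^++3\varepsilon/2)N_1}<\frac{1}{2}r_1\cdot t^{-1}$, and $h_1$ with $e^{\varepsilon/2}C_0 h_1<\frac{1}{2}r_1\cdot t^{-1}$ and $t\cdot C_0 h_1+r_1<r$) is different from the single containment inequality you write. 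A smaller point: Proposition~\ref{pro:averf} cannot be invoked as a black box here, since $D(z)$ is centered at $z\ne g^n x$ and has radius $r_1\cdot t^{-1}$ rather than $>r$; one must descend to Lemma~\ref{lem:averf} for the cone filtration of the $D_k$, as the paper does.
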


Symmetrically, we can obtain

\begin{proposition}\label{pro:adE1}
Under the assumption of Proposition \ref{pro:adF1},
 for every sufficiently small $\varepsilon>0$, there exist $r>r_1>0$ such that for any $t\ge 1$, one can find $h_1=h_1(t)>0$, $N_1=N_1(t)\ge 1$, and $\tau=\tau(t)\in (0,1)$ with the following property.
 
 For any $0<h\le h_1$  and any resonance block $\Lambda_t^{\varepsilon}$ w.r.t. $(g,\nu,E\oplus F)$, if $x,g^n(x)\in \Lambda_t^{\varepsilon}$ for some $n\ge N_1$, then the following holds:
Let $D(z)$ be a $(E,\theta_1,r_1\cdot t^{-1},h)$-admissible manifold near $x$ and denote by $g^{-n}(w)$ the transverse intersection of $D(z)$ and $W_{r_1\cdot t^{-1}}^F(x,g)$. Define
\begin{align*}
D_0&=D(z),\\
D_k&=g(D_{k-1})\cap \overline{B}(g^kx,r),\quad \forall 1\le k \le n.
\end{align*}
If $D(w)\subset D_n$ is a disk of radius $r_1\cdot t^{-1}$ around $w$, then $D(w)$ is an $(E,\theta_1,r_1\cdot t^{-1}, \tau \cdot h)$-admissible manifold near $g^nx$ satisfying 
\begin{equation}\label{item:contraction}
d(g^{-k}z_1, g^{-k}z_2)\le t\cdot {\rm e}^{-(\chi_E^-(\nu,g)-2\varepsilon)k}d(z_1,z_2),\quad \forall z_1,z_2\in D(w),\quad \forall 0\le k \le n.
\end{equation}
\end{proposition}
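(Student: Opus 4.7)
The proof is entirely dual to that of Proposition \ref{pro:adF1}: I would interchange the roles of $E$ and $F$, and replace backward iteration of $F$-admissible manifolds by forward iteration of $E$-admissible manifolds. The central tool is Proposition \ref{pro:avere}, which controls exactly this forward iteration under two resonance-type hypotheses, both of which are supplied by Corollary \ref{cor:lambda-bl} at the endpoints $x$ and $g^n(x)$ of the orbit segment. First I would fix $\varepsilon>0$ small, extract $r=r(\varepsilon)$ and the cone-width $\theta_1(t)$ from Proposition \ref{pro:avere}, and pick $r_1\le r$ and $h_1(t)>0$ small so that any $E$-tangent disk of radius $r_1 t^{-1}$ centered within $\overline{B}(x,h_1)$ meets $W^F_{r_1 t^{-1}}(x,g)$ transversally in a unique point, and so that the whole construction stays inside a uniform neighborhood $\overline{B}({\rm supp}(\nu),r_0)$ of ${\rm supp}(\nu)$.

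The key geometric step is to bound $d(g^{-n}(w),x)$. Since $D(z)$ is tangent to $\cC^E_{\theta_1}$ while $W^F_{r_1 t^{-1}}(x,g)$ is tangent to $F$, and the angle between $E$ and $F$ is uniformly bounded below on a neighborhood of ${\rm supp}(\nu)$, a standard graph-transform / implicit-function argument yields $d(g^{-n}(w),x)\le c_0\cdot h$ for a constant $c_0$ depending only on this uniform angle bound. Applying the forward stable-manifold contraction of Theorem \ref{thm:stable-block} to the two points $g^{-n}(w),x\in W^F_{{\rm loc}}(x,g)$ then gives
\[
d(w,g^n x) \le t\cdot {\rm e}^{(\chi_F^+(\nu,g)+2\varepsilon)n}\cdot d(g^{-n}(w),x) \le c_0\cdot t\cdot {\rm e}^{(\chi_F^+(\nu,g)+2\varepsilon)n}\cdot h.
\]
Choosing $N_1=N_1(t)$ so large that $\tau(t):=c_0\cdot t\cdot {\rm e}^{(\chi_F^+(\nu,g)+2\varepsilon)N_1}$ lies in $(0,1)$ forces $d(w,g^n x)\le \tau\cdot h$ whenever $n\ge N_1$.

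For the cone, radius, and contraction properties of $D(w)$, I would invoke Proposition \ref{pro:avere} directly. The orbit segment $\{x,\dots,g^n x\}$ satisfies both of its hypotheses by Corollary \ref{cor:lambda-bl}, so every $E$-tangent disk of radius $\ge r$ around $x$ contains a subdisk $\widetilde{D}$ around $x$ of radius $r\cdot t^{-1}$ whose forward iterates stay tangent to $\cC^E_{\theta_1}$, remain in $\overline{B}(g^k x,r)$, and separate pairwise distances by at least $t^{-1}\cdot {\rm e}^{(\chi_E^-(\nu,g)-2\varepsilon)k}$ at step $k$. Since $g^{-n}(w)$ is the unique transverse intersection detected above, the subdisk $D(w)\subset D_n$ of radius $r_1 t^{-1}$ around $w$ is the forward image of a subdisk of such a $\widetilde{D}$. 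Hence $D(w)$ is tangent to $\cC^E_{\theta_1}$, has the required radius once $r_1\le r$ has been fixed, and inverting the expansion estimate yields the claimed contraction
\[
d(g^{-k}z_1,g^{-k}z_2)\le t\cdot {\rm e}^{-(\chi_E^-(\nu,g)-2\varepsilon)k}\cdot d(z_1,z_2), \qquad z_1,z_2\in D(w),\ 0\le k\le n.
\]

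The main obstacle is the transverse-intersection closeness bound in the second step: the constant $c_0$ must be independent of $t$ and $n$, so the implicit-function estimate has to be run using the uniform angle bound on $\overline{B}({\rm supp}(\nu),r_0)$ rather than at the $t$-dependent scale $r_1 t^{-1}$. This compels $h_1$ to shrink with $t$ but keeps $c_0$ uniform, which is precisely what makes $\tau(t)$ a genuine contraction factor strictly less than $1$ independently of $h$. Once $c_0$ is pinned down, all remaining pieces are drop-in applications of Proposition \ref{pro:avere}, Corollary \ref{cor:lambda-bl}, and Theorem \ref{thm:stable-block}.
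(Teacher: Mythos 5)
Your overall strategy --- dualize the proof of Proposition \ref{pro:adF1} by exchanging the roles of $E$ and $F$ and iterating forward instead of backward --- is exactly what the paper intends by the phrase ``symmetrically, we can obtain.'' The displacement estimate $d(w,g^n x)\le C_0\,t\,{\rm e}^{(\chi_F^+ +2\varepsilon)n}\,h$ via the forward contraction on $W^F_{\rm loc}(x,g)$ from Theorem \ref{thm:stable-block}, the identification of $C_0$ as the $t$-independent angle constant fixed in Subsection~\ref{subsec:acf}, and the cone and contraction estimates via the average-domination hypotheses from Corollary~\ref{cor:lambda-bl} are all correct and match the dual of the paper's argument.

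The one genuine gap is in how you justify that $D_n$ actually contains a disk of radius $r_1 t^{-1}$ around $w$. You propose to invoke Proposition~\ref{pro:avere} directly and then say $D(w)$ ``is the forward image of a subdisk of such a $\widetilde D$.'' This does not work as stated: Proposition~\ref{pro:avere} takes as input a disk of radius at least $r$ centered \emph{at $x$}, and returns a subdisk of radius $r\cdot t^{-1}$ centered at $g^n(x)$. Here the input disk $D(z)$ has radius only $r_1\cdot t^{-1}\ll r$ and is centered at $z$, not $x$; and the output must be a disk around the intersection point $w$, which is generally neither $g^n(x)$ nor $g^n(z)$. The paper's proof of Proposition~\ref{pro:adF1} handles the corresponding step with a standalone contradiction argument (the ``Claim'' that $D_0$ has radius no less than $r_1 t^{-1}$ around $w$), which reuses the cone-control of Lemma~\ref{lem:averf} and the graph-distance bounds but is \emph{not} a direct application of Proposition~\ref{pro:averf}. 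What Proposition~\ref{pro:adE1} needs is the dual contradiction argument: suppose $a\in\partial D_n$ with $d_{D_n}(w,a)<r_1 t^{-1}$; backward iterate, using the $E$-cone tangency of $g^{-k}(D_n)$ (Lemma~\ref{lem:averf} applied in the reversed direction) and hypothesis~(ii) of Proposition~\ref{pro:avere} supplied by Corollary~\ref{cor:lambda-bl} at $g^n(x)$, to show $d_{D(z)}(g^{-n}w,g^{-n}a)<\tfrac12 r_1 t^{-1}$ once $n\ge N_1$; combined with $d_{D(z)}(z,g^{-n}w)\le {\rm e}^{\varepsilon/4}C_0 h_1<\tfrac12 r_1 t^{-1}$, this forces $d_{D(z)}(z,g^{-n}a)<r_1 t^{-1}$, contradicting $g^{-n}(a)\in\partial D(z)$. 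With this inserted, your proposal is complete.
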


\subsubsection{Auxiliary constants for $(g,\nu)$}\label{subsec:acf}
Let us fix $(g,\nu)$ satisfying the assumption of Proposition \ref{pro:adF1}.
For simplicity, we will rewrite $\chi_E^-=\chi_E^-(\nu,g)$ and $\chi_F^{+}=\chi_F^{+}(\nu,g)$.
Let us choose parameters $\varepsilon, \theta, r, C_0$ with the following properties:
\begin{itemize}
\item[---] $
0<\varepsilon<\min\left\{\frac{1}{10}\chi_E^-, -\frac{1}{10}\chi_F^+,\frac{1}{10}\left(\chi_E^--\chi_F^+\right)\right\}.
$
\item[---] Choose $r_0$ such that $B({\rm supp}(\nu),r_0)\subset \U$.
\item[---] $r=r(\varepsilon)<r_0$ satisfies Lemma\ref{lem:averf}, Propositions \ref{pro:averf} ~\ref{pro:avere} and Theorem \ref{thm:un-stableblock} for the above $\varepsilon$. 
\item[---] For any submanifold $W$ tangent to $\cC_{\theta}^E$ or $\cC_{\theta}^F$, within ${\rm diam}(W)\le 2r$, we have 
\begin{equation}\label{disch}
{\rm e}^{-\varepsilon/4}d(z_1,z_2)\le d_W(z_1,z_2) \le {\rm e}^{\varepsilon/4}d(z_1,z_2),\quad \forall z_1,z_2\in W.
\end{equation}
\item[---] If $D^E$ and $D^F$ are submanifolds tangent to $\mathcal{C}_\theta^E$ and $\mathcal{C}_\theta^F$, respectively, and each has diameter no less than $2r$, then $D^E$ and $D^F$ intersect in at most one point. When $D^E\cap D^F=\{z\}$, then 
$$
\max\{d(x,z), d(y,z)\}\le C_0\cdot d(x,y),
$$
for every $x\in D^E$ and $y\in D^F$.
\item[---] Denote $r_1=r\cdot {\rm e}^{-\varepsilon}$.
\end{itemize}

For every $t\ge 1$, let us fix $\theta_1=\theta_1(t)\le \theta$ to satisfy Propositions \ref{pro:averf} ~\ref{pro:avere}. In particular, we have
\begin{equation}\label{esteps}
\frac{\|Dg^{-1}|_{\widetilde{E}(x_1)}\|}{\|Dg^{-1}|_{E(x_2)}\|}, \frac{m\left(Dg^{-1}|_{\widetilde{F}(x_1)}\right)}{m\left(Dg^{-1}|_{F(x_2)}\right)}\in \left({\rm e}^{-\frac{\varepsilon}{2}}, {\rm e}^{\frac{\varepsilon}{2}}\right),
\end{equation}
whenever $x_1,x_2\in B({\rm supp}(\nu),r_0)\subset \U$ with $d(x_1,x_2)<r$ 
and $\widetilde{E}\subset \cC^E_{t^2\theta_1(t)}$ and $\widetilde{F}\subset \cC^F_{t^2\theta_1(t)}$.

By the continuity of subbundles, we have the following observation.

\begin{lemma}\label{interuniquet}
For every $t\ge 1$, there exists $h_0(t)>0$ such that for every $h\le h_0(t)$, for every $x,y,z\in B({\rm supp}(\nu),r_0)$, if $D^E(y), D^F(z)$ are $(E,\theta_1,r_1\cdot t^{-1},h)$ and $(F,\theta_1,r_1\cdot t^{-1},h)$ admissible manifolds near $x$, respectively, then $D^E(y)$ intersects $D^F(z)$ transversely at a unique point.
\end{lemma}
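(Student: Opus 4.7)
The plan is to work in local coordinates around $x$ and represent both admissible manifolds as graphs over the splitting $T_xM=E(x)\oplus F(x)$, then solve for their intersection by a Banach contraction argument. First, I would pull everything back via the exponential map $\exp_x$ to a neighbourhood of the origin in $T_xM=E(x)\oplus F(x)$ and use the continuity of the subbundles $E$ and $F$ on $\U$ together with the smallness of the cone width $\theta_1(t)$ to choose a scale $\rho=\rho(t)>0$ such that, inside $\exp_x^{-1}(B(x,\rho))$, every $C^1$ submanifold of dimension $\dim F$ tangent to $\cC^F_{\theta_1}$ is the graph of a Lipschitz map $\psi^F:V\subset F(x)\to E(x)$ with Lipschitz constant at most some $\kappa=\kappa(\theta_1)<1/2$, and symmetrically every submanifold tangent to $\cC^E_{\theta_1}$ is the graph of a Lipschitz map $\psi^E:U\subset E(x)\to F(x)$ with the same bound. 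This is the standard translation between cone-tangent submanifolds and graphs over transverse subspaces; the quantitative bounds follow from the choices in Subsection \ref{subsec:acf}.

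Next, I would pick $h_0(t)>0$ so small (for instance $h_0(t)\le\tfrac14 r_1\cdot t^{-1}$ together with $h_0(t)\le \rho(t)/2$) that whenever $d(x,y)\le h\le h_0(t)$ and $d(x,z)\le h$, the disk of radius $r_1\cdot t^{-1}$ around $y$ inside $D^E(y)$ projects in $\exp_x$-coordinates to a set containing the closed ball $\overline{B}_{E(x)}(0,\tfrac12 r_1\cdot t^{-1})$, and analogously for $D^F(z)$. Writing $D^E(y)$ and $D^F(z)$ as graphs of $\psi^E$ and $\psi^F$ defined on those balls, a point in $D^E(y)\cap D^F(z)$ corresponds to a pair $(a,b)\in E(x)\times F(x)$ with $b=\psi^E(a)$ and $a=\psi^F(b)$, i.e.\ to a fixed point of $b\mapsto\psi^E(\psi^F(b))$. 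Since $\psi^E\circ\psi^F$ has Lipschitz constant at most $\kappa^2<1/4$ and, by the smallness of $h$, maps the closed ball $\overline{B}_{F(x)}(0,\tfrac12 r_1\cdot t^{-1})$ into itself, the Banach fixed point theorem yields a unique fixed point, hence a unique intersection point $\xi\in D^E(y)\cap D^F(z)$.

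Transversality at $\xi$ is then free: $T_\xi D^E(y)\subset\cC^E_{\theta_1}$ and $T_\xi D^F(z)\subset\cC^F_{\theta_1}$, and after possibly shrinking $\theta_1$ these two cones are pointwise transverse with $\dim E+\dim F=\dim M$, because they continuously perturb the genuinely transverse algebraic splitting $E(x)\oplus F(x)$ throughout a neighbourhood of $\mathrm{supp}(\nu)$. The main obstacle I expect is the bookkeeping of the scales $\rho(t)$ and $h_0(t)$: both $\theta_1(t)$ and the disk radius $r_1\cdot t^{-1}$ vary with $t$, so $h_0(t)$ must be chosen small enough in $t$ to guarantee simultaneously the graph representations, the overlap of their natural domains, and the self-map property needed for the contraction. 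Once these scales are fixed, the remainder is routine and depends only on the continuity of $E,F$ on $\U$ and the cone-transversality built into the auxiliary constants of Subsection \ref{subsec:acf}.
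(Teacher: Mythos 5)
The paper states Lemma~\ref{interuniquet} without any written proof, prefacing it only with ``By the continuity of subbundles, we have the following observation.'' Your argument --- pulling back via $\exp_x$, representing cone--tangent disks as Lipschitz graphs over $E(x)$ and $F(x)$ respectively, and locating the intersection as the unique fixed point of $\psi^E\circ\psi^F$ by a Banach contraction --- is exactly the standard construction the authors are invoking tacitly; it fills in their ``observation'' faithfully and correctly.

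Two small remarks worth absorbing. First, when you pass from the pointwise cone condition $T_wD\subset\cC^F_{\theta_1}(w)$ (defined at each $w$ via the splitting $E(w)\oplus F(w)$) to a Lipschitz graph over the \emph{fixed} transversal $F(x)$, the Lipschitz constant is $\theta_1$ plus a correction coming from the drift of the splitting on $B(x,\rho)$; you acknowledge this by shrinking $\rho(t)$, and one should also note that the paper's choice of $\theta$ in Subsection~\ref{subsec:acf} --- in particular the requirement that cone--tangent submanifolds of diameter $\le 2r$ meet in at most one point and satisfy the quasi-isometry estimate \eqref{disch} --- already forces $\theta$, and hence $\theta_1(t)\le\theta$, to be small enough that $\kappa<1/2$ is available without additional assumptions. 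Second, you do not need to ``possibly shrink $\theta_1$'' for transversality: a vector lying simultaneously in $\cC^E_{\theta_1}(\xi)$ and $\cC^F_{\theta_1}(\xi)$ satisfies $\|v_F\|\le\theta_1^2\|v_F\|$, so with $\theta_1<1$ (which is enforced by the construction, $\theta_1(t)\in(0,1)$) the two cones meet only at the origin, giving transversality automatically since $\dim E+\dim F=\dim M$. With those clarifications your proof is complete and consistent with the auxiliary constants of Subsection~\ref{subsec:acf}.
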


%

\subsubsection{Proof of Proposition \ref{pro:adF1}}

Fix constants as in Subsection \ref{subsec:acf}.
Let $h_0(t)$ be the constant given by Lemma \ref{interuniquet}. Then, take $h_1=h_1(t)<h_0(t)$ such that 
$$
t\cdot C_0\cdot h_1+r_1<r,\quad {\rm e}^{\varepsilon/2}\cdot C_0\cdot h_1<\frac{1}{2}\cdot r_1\cdot t^{-1}.
$$
Choose integer $N_1\ge 1$ such that 
\begin{equation}\label{n11}
r_1\cdot {\rm e}^{(\chi_F^++3\varepsilon/2)N_1}<\frac{1}{2}r_1\cdot t^{-1},
\end{equation}
 and
\begin{equation}\label{n12}
\tau:=C_0\cdot t\cdot {\rm e}^{- \min\big\{\chi_E^-(\nu,g)-2\varepsilon, -(\chi_F^+(\nu,g)+2\varepsilon)\big\}N_1}<1.
\end{equation}
For $0<h\le h_1$ and $x, g^n(x)\in \Lambda_t^{\varepsilon}$, let $D(z)$ be an $(F, \theta_1,r_1\cdot t^{-1},h)$-admissible manifold near $g^n(x)$. Since $g^n(x)\in \Lambda_t^{\varepsilon}$, it admits a local unstable manifold of radius $r\cdot t^{-1}$ guaranteed by Theorem \ref{thm:un-stableblock}. In particular,  
$$
W^E(g^nx,g)\supset W^E_{r\cdot t^{-1}}(g^nx,g)\supset W^E_{r_1\cdot t^{-1}}(g^nx,g).
$$
Let $g^n(w)$ be the unique intersection of $D(z)$ and $W^E_{r_1\cdot t^{-1}}(g^nx,g)$, guaranteed by Lemma \ref{interuniquet}. From the choice of $h_1$, one knows that $D(z)\subset B(g^nx,r)$. Let $D_n=D(z)$, and define
$$
D_k=g^{-1}(D_{k+1})\cap \overline{B}(g^kx,r),\quad \forall 0\le k \le n-1.
$$
Since $g^n(w)\in W_{r_1\cdot t^{-1}}^E(g^nx,g)$, we know for every $0\le k \le n$ that 
\begin{align}\label{35}
d(g^kw, g^kx)&\le t\cdot {\rm e}^{-(\chi_E^--2\varepsilon)(n-k)}\cdot d(g^nw,g^nx)\notag\\
&\le t\cdot C_0\cdot d(z,g^nx)\notag\\
&<t\cdot C_0\cdot h\notag\\
&\le t\cdot C_0\cdot h_1.
\end{align}
With the choice of $h_1$, the preceding argument  implies that $g^kw\in B(g^kx,r)$ for every $0\le k\le n$.
Now we show the following: 
\begin{claim}
$D_0$ has radius no less than $r_1\cdot t^{-1}$ around $w$.
\end{claim}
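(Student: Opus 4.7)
My plan is to argue by contradiction. Suppose there exists $y\in\partial D_0$ with $d_{D_0}(w,y)<r_1\cdot t^{-1}$. Because $D_0=g^{-n}(D_n)\cap\bigcap_{k=0}^{n-1}g^{-k}(\overline{B}(g^k x,r))$, such a boundary point must make at least one of these defining constraints tight, i.e.\ either $d(g^k y,g^k x)=r$ for some $0\le k\le n-1$, or $g^n(y)\in\partial D_n$. I intend to rule out both possibilities, thereby forcing the contradiction.

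The first step would be to transport the smallness of $d_{D_0}(w,y)$ forward along the orbit. Since $x\in\Lambda_t^\varepsilon$, Corollary \ref{cor:lambda-bl} supplies the product estimate $\prod_{i=0}^{k-1}\|Dg|_{F(g^i x)}\|\le t\cdot e^{(\chi_F^++\varepsilon)k}$, and Lemma \ref{lem:averf} together with the cone-width choice $\theta_1(t)$ from Subsection \ref{subsec:acf} keeps every $D_k$ tangent to $\cC^F_{t^2\theta_1}$. Mimicking the derivative-comparison chain \eqref{zhongzhi} from the proof of Proposition \ref{pro:averf}, applied to the intrinsic path inside $D_k$ joining $g^k w$ to $g^k y$, would give
\[
d_{D_k}(g^k w,g^k y)\le t\cdot (\zeta_1 e^{\varepsilon/2})^{k}\cdot d_{D_0}(w,y)<(\zeta_1 e^{\varepsilon/2})^{k}\cdot r_1,\qquad 1\le k\le n,
\]
with $\zeta_1=e^{\chi_F^++\varepsilon}$ and $\zeta_1 e^{\varepsilon/2}<1$ by the smallness of $\varepsilon$. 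Combining this with the drift bound \eqref{35} for $d(g^k w,g^k x)\le tC_0 h_1$, the length-distance comparison \eqref{disch}, and the triangle inequality, the calibration of $\varepsilon$, $r_1=r e^{-\varepsilon}$ and $h_1$ in Subsection \ref{subsec:acf} is designed precisely so that $d(g^k y,g^k x)<r$ for every $1\le k\le n-1$; the case $k=0$ follows directly from $tC_0 h_1+r_1<r$. Thus no ball constraint can be tight at $y$.

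For the remaining possibility $g^n(y)\in\partial D_n$, I would plug the same contraction estimate at $k=n$ into the choice of $N_1$ in \eqref{n11} to obtain $d_{D_n}(g^n w,g^n y)<\tfrac12 r_1 t^{-1}$. Separately, since $g^n w$ is the transverse intersection of $D_n$ and $W^E_{r_1 t^{-1}}(g^n x,g)$, the geometric constant $C_0$ of Subsection \ref{subsec:acf} bounds $d(z,g^n w)$ by $C_0 h_1$, and the second smallness requirement $e^{\varepsilon/2}C_0 h_1<\tfrac12 r_1 t^{-1}$ then yields $d_{D_n}(z,g^n w)<\tfrac12 r_1 t^{-1}$ via \eqref{disch}. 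Summing, $d_{D_n}(z,g^n y)<r_1 t^{-1}$, so $g^n(y)$ lies strictly inside the open disk of radius $r_1 t^{-1}$ around $z$ contained in $D_n$ by admissibility, contradicting $g^n(y)\in\partial D_n$.

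The step I expect to be the tightest is keeping every intermediate ball constraint $d(g^k y,g^k x)<r$ strictly slack for $1\le k\le n-1$: the triangle inequality forces one to simultaneously absorb the orbit drift $tC_0 h_1$ of $g^k w$ away from $g^k x$, the distortion factor $e^{\varepsilon/4}$ from \eqref{disch}, and the intrinsic radius $(\zeta_1 e^{\varepsilon/2})^k r_1$ within $r$. The calibrations $\chi_F^++\varepsilon<0$ (with $\varepsilon<-\chi_F^+/10$), $r_1=re^{-\varepsilon}$, and $tC_0 h_1+r_1<r$ are just strong enough to close this inequality, and this book-keeping is the sole place where the precise numerology of Subsection \ref{subsec:acf} enters in an essential way.
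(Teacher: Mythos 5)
Your proof is correct and follows essentially the same route as the paper: a contradiction argument that pushes the boundary point forward, using the cone-tangency from Lemma \ref{lem:averf}, the co-norm estimate from Corollary \ref{cor:lambda-bl} together with \eqref{esteps} to establish the same intrinsic contraction bound $d_{g^kD_0}(g^kw,g^k\cdot)\le r_1\cdot e^{(\chi_F^++3\varepsilon/2)k}$, the drift bound \eqref{35} and the calibration $r_1+tC_0h_1<r$ to keep every intermediate ball constraint slack, and finally the choice of $N_1$ plus $e^{\varepsilon/2}C_0h_1<\tfrac12 r_1t^{-1}$ to contradict $g^n(y)\in\partial D_n$. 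The only cosmetic difference is that you phrase the case analysis as a dichotomy (a ball constraint is tight or the $D_n$-boundary constraint is tight), while the paper derives the same conclusion by inductively showing $g^k(a)\in\partial D_k$ for all $k$; these are logically identical.
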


\begin{proof}
Assume by contradiction that there is $a\in \partial D_0$ such that $d_{D_0}(w,a)<r_1\cdot t^{-1}$. Then 
$$
g^k(a)\in D_k\subset \overline{B}(g^k(x),r),\quad \forall 0\le k \le n.
$$
By the construction of $D_k$, we know $g^k(D_0)\subset D_k\subset \overline{B}(g^k(x),r)$ for every $0\le k \le n$. Let us take $\xi_i\in g^i(D_0)$ such that 
$$
d_{g^{i-1}(D_0)}(g^{i-1}w, g^{i-1}a)\ge m(Dg^{-1}|_{T_{\xi_i}(g^iD_0)})\cdot d_{g^i(D_0)}(g^iw, g^ia),\quad 1\le i \le n.
$$
By Lemma \ref{lem:averf}, we conclude that $g^i(D_0)$ is tangent to $\cC^F_{t^2\cdot \theta_1}$ for all $1\le i \le n$, whence
$T_{\xi_i}(g^iD_0)\in \cC^F_{t^2\cdot \theta_1}(\xi_i)$.
Observe also that $d(\xi_i, g^ix)\le r$ as $g^i(D_0)\subset \overline{B}(g^ix,r)$. As a consequence, 
\begin{align*}
d_{D_0}(w,a)&\ge \left(\prod_{i=1}^{k}m(Dg^{-1}|_{T_{\xi_i}(g^iD_0)})\right)\cdot d_{g^kD_0}(g^kw, g^ka)\\
&\ge \left(\prod_{i=1}^{k}{\rm e}^{-\varepsilon/2}\cdot m(Dg^{-1}|_{F(g^ix)})\right)\cdot d_{g^kD_0}(g^kw, g^ka) \quad (\textrm{by}~\eqref{esteps})\\
&\ge t^{-1}\cdot {\rm e}^{-(\chi_F^++3\varepsilon/2)k}\cdot d_{g^kD_0}(g^kw,g^ka). \quad (\textrm{by Corollary}~\ref{cor:lambda-bl})
\end{align*}
Together with the assumption $d_{D_0}(w,a)<r_1\cdot t^{-1}$, this yields
\begin{equation}\label{eq:wadis}
d_{g^kD_0}(g^kw,g^ka)<r_1\cdot {\rm e}^{(\chi_F^++3\varepsilon/2)k}.
\end{equation}
In particular, we get
$$
d(g^ka,g^kx)\le d(g^ka,g^kw)+d(g^kw,g^kx)<r_1+t\cdot C_0\cdot h_1<r
$$
for every $0\le k \le n$.
Thus, from the facts $a\in \partial D_0$, $a\in B(x,r)$ and $D_0=g^{-1}(D_1)\cap \overline{B}(x,r)$ we conclude that $g(a)\in \partial D$, otherwise we have $a\in {\rm int}(D_0)$, which is a contradiction. 
We obtain inductively that $g^k(a)\in \partial D_k$ for every $1\le k \le n$. In particular, we see from $g^n(a)\in \partial D_n$ that $d_{D(z)}(z_1,g^na)=r_1\cdot t^{-1}$. Moreover, 
\begin{equation}\label{ezw1}
d_{D(z)}(z,g^nw)\le {\rm e}^{\varepsilon/4}\cdot d(z,g^nw)\le {\rm e}^{\varepsilon/4}\cdot C_0\cdot d(z,g^nx)<{\rm e}^{\varepsilon /4}\cdot C_0\cdot h_1.
\end{equation}
In view of \eqref{n11}, for any $n\ge N_1$, we have from \eqref{eq:wadis} that
\begin{equation}\label{ezw2}
d_{g^nD_0}(g^nw, g^na)\le r_1\cdot {\rm e}^{(\chi_F^++3\varepsilon/2)n}<\frac{1}{2}r_1\cdot t^{-1}.
\end{equation}
Hence, by combining \eqref{ezw1}, \eqref{ezw2} and the choice of $h_1$, one concludes that
\begin{align*}
r_1\cdot t^{-1}=d_{D(z)}(z,g^na)&\le d_{D(z)}(z,g^nw)+d_{D(z)}(g^nw,g^na)\\
&\le {\rm e}^{\varepsilon/4}\cdot C_0\cdot h_1+\frac{1}{2}r_1\cdot t^{-1}\\
&< r_1\cdot t^{-1},
\end{align*}
which is a contradiction. Thus we complete the proof of the claim.
\end{proof}
Denote by $D(w)$ the disk of radius $r_1 \cdot t^{-1}$ centered at $w$ inside $D_0$. Then $D(w)$ is tangent to $\mathcal{C}_{\theta_1}^F$ and satisfies $g^n(D(w)) \subset D(z)$. Since $g^n w \in W^E_{\mathrm{loc}}(g^n x, g)$,  by \eqref{35} and \eqref{n12}, we have
\[
d(w, x) \le t \cdot e^{-(\chi_E^- - 2\varepsilon)n} \cdot d(g^n w, g^n x) \le t \cdot e^{-(\chi_E^- - 2\varepsilon)n} \cdot C_0 \cdot h \le  \tau \cdot h.
\]
  Thus, $D(w)$ is an $(F,\theta_1,r_1\cdot t^{-1}, \tau \cdot h)$-admissible manifold near $x$.
It remains to show the last contraction property of $D(w)$. Indeed, similar to the estimation on $d_{g^kD_0}(g^kw,g^ka)$ in the proof of the claim, for every $0\le k \le n$, it holds that
$$
d_{g^kD(w)}(g^kz_1, g^kz_2)\le t\cdot {\rm e}^{(\chi_F^++3\varepsilon/2)k}d_{D(w)}(z_1,z_2),\quad \forall z_1,z_2\in D(w).
$$ 
This gives \eqref{item:contraction1}, consulting \eqref{disch}.

\subsection{Shadowing and closing on resonance blocks}


\begin{theorem}\label{th:shaodowg}
Let $g\in {\rm Diff}^1(M)$  preserving an ergodic measure  $\nu$. Assume that $\U$ is an open neighborhood of ${\rm supp}(\nu)$, for which there exists a continuous invariant splitting 
$
T_{\U}M=E\oplus F
$
with 
$$
\chi_E^-(\nu,g)>0>\chi_F^{+}(\nu,g).
$$

Then for any $\varepsilon>0$ sufficiently small, and any $t\ge 1$, there exist $\beta_0=\beta_0(t)>0$, $C_1=C_1(t)>0$, $N_2=N_2(t)\ge 1$ such that for every $0<\beta<\beta_0$, if $\{(x_k,n_k)\}_{k\in \ZZ}$ is a $\beta$-pseudo-orbit in a resonance block $\Lambda_t^{\varepsilon}$ w.r.t. $(g,\nu,E\oplus F)$  satisfying $n_k\ge N_2$ for all $k$, then there is a unique $z\in M$ whose orbit  $(C_1\beta,\lambda)$- shadows $\{(x_k,n_k)\}_{k\in\ZZ}$ with rate $\lambda=\min\{\chi_E^-(\nu,g)-2\varepsilon, -(\chi_F^+(\nu,g)+2\varepsilon)\}$. 

Moreover, if $\{(x_k,n_k)\}_{k\in \ZZ}$ is periodic, then the shadowing point $z$ can be chosen to be a hyperbolic periodic  point.
\end{theorem}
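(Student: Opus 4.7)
The plan is to execute the Pesin--Katok intersection scheme, using Propositions \ref{pro:adF1} and \ref{pro:adE1} as the core contraction device along pseudo-orbit segments, and Lemma \ref{interuniquet} as the intersection device.

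\textbf{Step 1 (setup and the contractive recursion).} Fix $\varepsilon>0$ small enough for Propositions \ref{pro:adF1} and \ref{pro:adE1} to apply. For the given $t\ge 1$ these supply the constants $r_1$, $\theta_1=\theta_1(t)$, $h_1=h_1(t)$, $N_1=N_1(t)$ and a contraction ratio $\tau=\tau(t)\in(0,1)$. I set $N_2:=N_1$, $\lambda:=\min\{\chi_E^-(\nu,g)-2\varepsilon,\,-(\chi_F^+(\nu,g)+2\varepsilon)\}$, and choose $\beta_0=\beta_0(t)>0$ small enough that $\beta_0/(1-\tau)\le\min\{h_0(t),h_1(t)\}$, where $h_0(t)$ is the constant from Lemma \ref{interuniquet}. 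The key observation is that one application of Proposition \ref{pro:adE1} (respectively Proposition \ref{pro:adF1}) along a segment $(x_k,n_k)$, followed by absorbing the $\beta$-jump $d(g^{n_k}(x_k),x_{k+1})<\beta$ via the triangle inequality, updates the offset $h$ of an admissible manifold as $h\mapsto \tau h+\beta$. This map sends $[0,h_1]$ into itself, with attracting fixed point $h^\star=\beta/(1-\tau)$.

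\textbf{Step 2 (admissible manifolds at every $x_k$).} Starting at $x_{-m}$, I take $D^E_{m,-m}:=W^E_{r_1 t^{-1}}(x_{-m},g)$, which is $(E,\theta_1,r_1 t^{-1},0)$-admissible by Theorem \ref{thm:un-stableblock}. Iterating Proposition \ref{pro:adE1} along the pseudo-orbit produces $D^E_{m,k}$ with $h^E_{m,k+1}\le \tau h^E_{m,k}+\beta\le h^\star$, so the recursion is valid. A diagonal Arzelà--Ascoli extraction (the disks are tangent to the fixed cone $\mathcal{C}_{\theta_1}^E$ and hence equicontinuous in the $C^1$ topology) yields a limit family $\{\widehat{D}^E_k\}$ satisfying $\widehat{D}^E_{k+1}\subset g^{n_k}(\widehat{D}^E_k)$ after local identification near $x_{k+1}$. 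The symmetric backward construction using Proposition \ref{pro:adF1}, starting from $W^F_{r_1 t^{-1}}(x_m,g)$, yields the stable admissible family $\{\widehat{D}^F_k\}$ with $g^{n_k}(\widehat{D}^F_k)\subset\widehat{D}^F_{k+1}$.

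\textbf{Step 3 (intersection, shadowing, and periodic case).} By Lemma \ref{interuniquet}, the pair $\widehat{D}^E_k,\widehat{D}^F_k$ meets transversely at a unique point $z_k$, and the transverse-intersection estimate from Subsection \ref{subsec:acf} gives $d(z_k,x_k)\le C_1\beta$ with $C_1=C_1(t)$ a multiple of $C_0/(1-\tau)$. The two invariance relations, together with the uniqueness of the intersection at $x_{k+1}$, force $g^{n_k}(z_k)=z_{k+1}$; hence $z:=z_0$ satisfies $g^{s_k}(z)=z_k$ for every $k$. For intermediate times $0\le j\le n_k$, I chain through the auxiliary points $p_k:=W^F_{r_1 t^{-1}}(x_k,g)\cap\widehat{D}^E_k$ and $q_k:=W^E_{r_1 t^{-1}}(x_k,g)\cap\widehat{D}^F_k$ (uniquely defined by Lemma \ref{interuniquet}); the contraction \eqref{item:contraction1} of Proposition \ref{pro:adF1} for pairs on $\widehat{D}^F_k$, the stable contraction of Theorem \ref{thm:un-stableblock} along $W^F$, and their mirrors on the $E$-side combine to yield
\[
d(g^{s_k+j}(z),g^j(x_k))\le C_1\beta\cdot e^{-\lambda\min\{j,\,n_k-j\}}.
\]
Uniqueness of the shadowing orbit follows from the uniqueness of each $z_k$. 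If the pseudo-orbit is periodic with period $K$, uniqueness forces $z_{k+K}=z_k$ and $z$ is periodic of period $p=\sum_{i=0}^{K-1}n_i$; hyperbolicity of $z$ is read off from the uniform exponential rates $\lambda$ along its orbit, inherited from the resonance block via Propositions \ref{pro:adF1}--\ref{pro:adE1}.

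\textbf{Main obstacle.} The delicate point is the intermediate-time bound in Step 3: Propositions \ref{pro:adF1}--\ref{pro:adE1} provide sharp contraction \emph{within} a single admissible manifold, but $x_k$ is only close to, not on, either $\widehat{D}^E_k$ or $\widehat{D}^F_k$. Bridging this gap requires the auxiliary projections $p_k$ and $q_k$ above, together with a careful tracking of how the $\beta$-jumps accumulate through the geometric series $\sum_{i\ge 0}\tau^i=1/(1-\tau)$. Once the bookkeeping is arranged, uniqueness of the shadowing orbit and hyperbolicity of the periodic shadowing point are routine consequences of the uniform rates supplied by the resonance block $\Lambda_t^\varepsilon$.
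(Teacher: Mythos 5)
Your outline follows the same Pesin--Katok intersection scheme as the paper and invokes the right tools (Propositions~\ref{pro:adF1}, \ref{pro:adE1}, Lemma~\ref{interuniquet}, the geometric-series control of the offsets). Two points, however, are not correct as written and constitute genuine gaps.

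\textbf{The Arzel\`a--Ascoli step.} You assert that the admissible disks $D^E_{m,k}$ ``are tangent to the fixed cone $\mathcal{C}_{\theta_1}^E$ and hence equicontinuous in the $C^1$ topology.'' Tangency to a fixed cone only bounds the derivative of the graph parametrization; it says nothing about the \emph{modulus of continuity} of the derivative, so it does not give $C^1$-equicontinuity and Arzel\`a--Ascoli in $C^1$ does not follow. What actually makes the family precompact in this paper's setting is that the cone shrinks under iteration: by (the dual of) Lemma~\ref{lem:averf}, $D^E_{m,k}$ lives inside a forward iterate of an initial unstable disk and so is tangent to $\mathcal{C}^E_{t^2\eta_2^{s_k-s_{-m}}\theta_1}$, whose width tends to $0$ as $m\to\infty$; combined with the uniform continuity of $E$ this gives $C^1$-equicontinuity. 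The paper sidesteps this entirely by never taking a limit of manifolds: it defines $z_m^k=D^E(w_{-m}^k)\cap D^F(w_m^k)$ and proves directly that $\{z_m^0\}_m$ is Cauchy, via the auxiliary points $y_m^k=D^E(w_{-m}^k)\cap D^F(w_{m+1}^k)$, which isolate the effect of lengthening the forward window while freezing the backward one. This is the cleaner route, and your limiting-manifold version requires either the cone-shrinking observation above or a reduction to $C^0$-convergence plus a separate argument that the intersection of Lipschitz disks with bounded cone slope remains unique.

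\textbf{Uniqueness.} You write ``Uniqueness of the shadowing orbit follows from the uniqueness of each $z_k$,'' but uniqueness of the intersection of \emph{your} limit manifolds only shows that your construction produces a single orbit; it does not show that no other orbit shadows the pseudo-orbit. The paper's uniqueness argument is genuinely separate: given two candidate shadowing orbits $z$ and $\tilde z$, it builds a new $F$-admissible chain starting from $g^{s_m}(z)$ and a new $E$-admissible chain starting from $g^{s_{-m}}(\tilde z)$, intersects them at a point $y$, and then uses the hyperbolic contraction along both chains to force $d(y,z)\le\alpha^m r_1 t^{-1}$ and $d(y,\tilde z)\le\alpha^m r_1 t^{-1}$, hence $z=\tilde z$. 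Your proposal omits this argument. Likewise, in the periodic case, uniqueness alone is not enough to conclude periodicity of $z$: the paper verifies $g^{n_0}(z)=z$ by comparing $z_m^0$ and $z_m^1$ through the auxiliary intersections $u_m^k$ and then checks hyperbolicity via Corollary~\ref{cor:lambda-bl}; this computation needs to appear.

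A minor point: the offset recursion is $h\mapsto\tau(h+\beta)$, not $h\mapsto\tau h+\beta$ (one first accounts for the $\beta$-jump and then applies the contraction), but both have the same qualitative conclusion.
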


\begin{proof}
Fix $\varepsilon>0$ sufficiently small such that 
$$
0<\varepsilon<\min\left\{\frac{1}{10}\chi_E^-(\nu,g), -\frac{1}{10}\chi_F^+(\nu,g),\frac{1}{10}\left(\chi_E^-(\nu,g)-\chi_F^+(\nu,g)\right)\right\}.
$$
Thus, $\lambda=\min\left\{\chi_E^-(\nu,g)-2\varepsilon, -(\chi_F^+(\nu,g)+2\varepsilon)\right\}>0$.
For any $t\ge 1$ we fix size $r$ as in Subsection \ref{subsec:acf}. Let $h_1=h_1(t), N_1=N_1(t),\tau=\tau(t)\in (0,1)$ be the constants given by Propositions \ref{pro:adF1} and \ref{pro:adE1}.
Let
\begin{align*}
C_1=\frac{1+C_0+3C_0^2}{1-\tau}\cdot t, ~\text{and}~~
\beta_0&=\frac{(1-\tau)}{C_1}\cdot h_1.
\end{align*}
Choose $N_2\ge N_1$ such that 
$$
\alpha:=t\cdot {\rm e}^{-\lambda N_2}\in (0,1), ~\text{and}~~t^{-1}\ge {\rm e}^{-\frac{1}{2}\varepsilon N_2}.
$$
For any $0<\beta<\beta_0$, let $\{(x_k,n_k)\}_{k\in \ZZ}$ be a $\beta$-pseudo-orbit in $\Lambda_t^{\varepsilon}$ satisfying $n_k\ge N_1$ for all $k$. Now we fix any $m\in \NN$. Let $D^F(x_m)$ be a disk inside $W^F_{loc}(x_m,g)$ of radius $r_1\cdot t^{-1}$ centered at $x_m$. By definition, $D^F(x_m)$ is an $(F,\theta_1,r_1\cdot t^{-1},\beta)$-admissible manifold near $g^{n_{m-1}}(x_{m-1})$. By Proposition \ref{pro:adF1}, there is an $(F,\theta_1,r_1\cdot t^{-1},\beta\cdot \tau)$-admissible manifold $D^F(w_m^{m-1})$ near $x_{m-1}$ such that 

$$
g^{n_{m-1}}(D^F(w_m^{m-1}))\subset D^F(x_m),
$$ 
$$
d(g^iz_1,g^iz_2)\le t\cdot {\rm e}^{(\chi_F^+(\nu,g)+2\varepsilon)i}\cdot d(z_1,z_2),
$$
for every $z_1,z_2\in D^F(w_m^{m-1})$ and $0\le i\le n_{m-1}$.
Note that $\beta(1+\tau)<(1-\tau)^{-1}\cdot \beta<h_1$. We can use Proposition \ref{pro:adF1} inductively to show that for every $-m\le k < m$, there exists an $\left(F,\theta_1,r_1\cdot t^{-1},\beta(\tau+\cdots+\tau^{m-k})\right)$-admissible manifold $D^F(w_m^k)$ near $x_k$, satisfying 
\begin{itemize}
\item[---] $g^{n_{k}}(D^F(w_m^{k}))\subset D^F(w_m^{k+1})$,
\item[---] $d(g^iz_1,g^iz_2)\le t\cdot {\rm e}^{(\chi_F^+(\nu,g)+2\varepsilon)i}\cdot d(z_1,z_2),\quad \forall z_1,z_2\in D^F(w_m^{k}),\quad \forall 0\le i\le n_{k}$,
\end{itemize}
where $w_m^m=x_m$. 

Similarly, if we let $D^E(x_{-m})$ be a disk inside $W^E_{loc}(x_{-m},g)$ of radius $r_1\cdot t^{-1}$ centered at $x_{-m}$. Then $D^E(x_{-m})$ is a $(E,\theta_1,r_1\cdot t^{-1},0)$-admissible manifold near $x_{-m}$. By Proposition \ref{pro:adE1}, for every $-m< k \le m$, one can find an $\left(E,\theta_1,r_1\cdot t^{-1},\beta(1+\tau+\cdots+\tau^{m+k-1})\right)$-admissible manifold $D^E(w_{-m}^{k})$ near $x_{k}$ satisfying 
\begin{itemize}
\item[---]$g^{-n_{k-1}}(D^E(w_{-m}^{k}))\subset D^E(w_{-m}^{k-1})$,
\item[---] $d(g^{-i}z_1,g^{-i}z_2)\le t\cdot {\rm e}^{-(\chi_E^-(\nu,g)-2\varepsilon)i}\cdot d(z_1,z_2),\quad \forall z_1,z_2\in D^E(w_{-m}^{k}),\quad \forall 0\le i\le n_{k-1}$,
\end{itemize}
where $w_{-m}^{-m}=x_{-m}$.

Let us consider 
$$
z_m^k=D^E(w_{-m}^{k})\cap D^F(w_{m}^{k}),\quad \forall -m\le k \le m.
$$

\begin{figure}[h]
	\centering
	\includegraphics[width=0.85\textwidth]{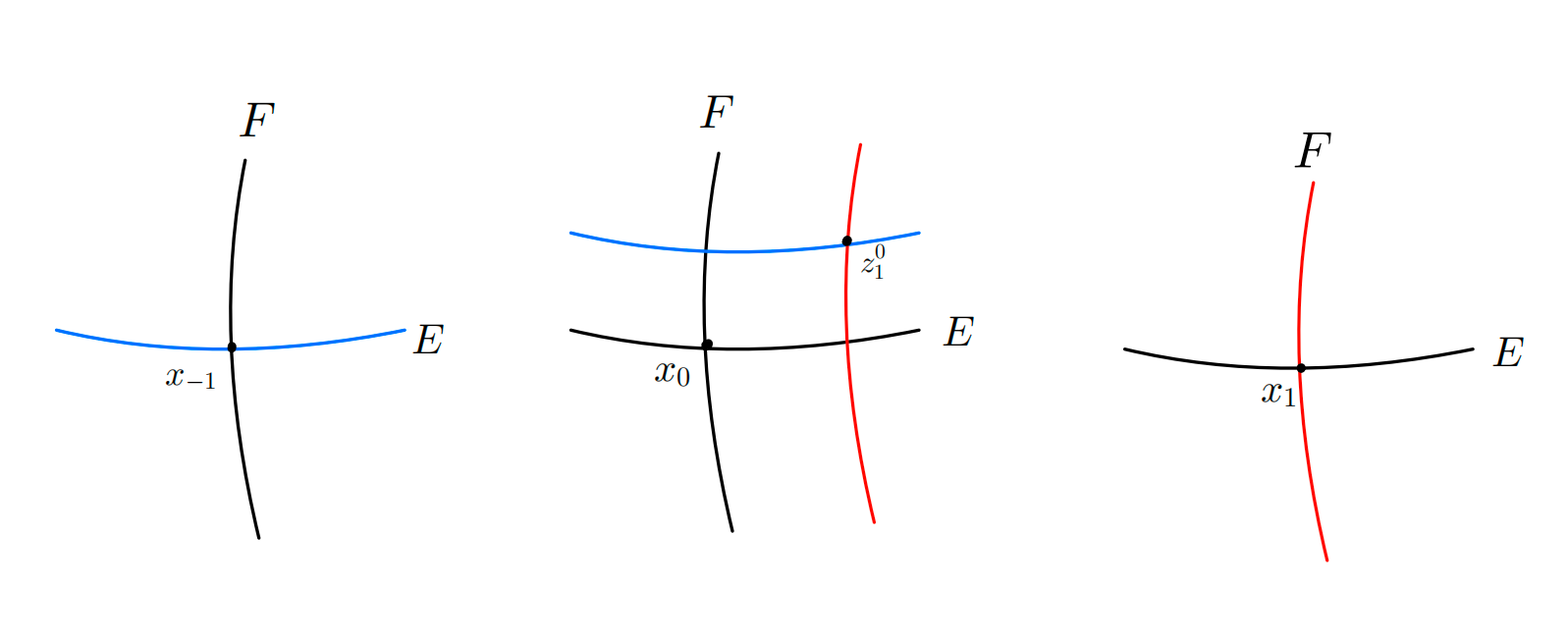}
	\caption{Position of $z_1^0$}
\end{figure} 
We split the proof into several steps.
\paragraph{Step 1.}
We prove that 
$$
g^{n_k}(z_m^k)=z_m^{k+1},\quad  \forall -m\le k < m.
$$

From $z_m^k\in D^F(w_m^k)$ and the construction of $D^F(w_m^k)$ in Proposition \ref{pro:adF1}, it follows that $g^{n_k}(z_m^k)\in D^F(w_m^{k+1})$ and 
\begin{equation}\label{stepone-11}
g^i(z_m^k)\in g^i(D^F(w_m^k))\subset \overline{B}(g^i(x_k),r).
\end{equation}
Thus, it suffices to show $g^{n_k}(z_m^k)\in D^E(w_{-m}^{k+1})$.
For any fixed $m,k$, take
\begin{align*}
D_0&=D^E(w_{-m}^k),\\
D_i&=g(D_{i-1})\cap \overline{B}(g^ix_k,r),\quad 1\le i \le n_k.
\end{align*}
Then $D^E(w_{-m}^{k+1})$ is a disk inside $D_{n_k}$ of radius $r_1\cdot t^{-1}$ centered at $w_{-m}^{k+1}$. It follows from \eqref{stepone-11} that 
$$
g^i(z_m^k)\in D_i,\quad \forall 0\le i \le n_k,
$$
which implies $g^{n_k}(z_m^k)\in D_n\cap D^F(w_m^{k+1})$.
Observe that both $D_{n_k}$ and $D^F(w_m^{k+1})$ have diameters no larger than $2r$, and are tangent to 
$\cC^E_{\theta_1}$ and $\cC^F_{\theta_1}$, respectively. So by the choice of $r$ one knows that $D_n$ intersects $D^F(w_m^{k+1})$ in at most one point. Hence,
$$
g^{n_k}(z_m^k)=D_{n_k}\cap D^F(w_m^{k+1}).
$$
Note that $D^E(w_{-m}^{k+1})\subset D_n$ and $D^E(w_{-m}^{k+1})\cap D^F(w_m^{k+1})=z_m^{k+1}$. As a result, we obtain 
$
g^{n_k}(z_m^k)=z_m^{k+1}.
$

\paragraph{Step 2.}
We prove   for every $0\le j\le n_k$ and $-m\le k \le m-1$, it holds that 
$$
d(g^j(z_m^k),g^j(x_k))\le C_1\cdot \beta\cdot {\rm e}^{-\lambda \min\{j,n_k-j\}}.
$$
 By the construction of $D^F(w_m^k)$, we have
$w_m^k=W^E_{r_1\cdot t^{-1}}(x_k,g)\cap D^F(w_m^k).$ Denote
\[\quad b_{-m}^k=W^F_{r_1\cdot t^{-1}}(x_k,g)\cap D^E(w_{-m}^k).\]
Then, from the construction of $D^E(w_{-m}^k)$, it follows that $g^{n_k}(b_{-m}^k)=w_{-m}^{k+1}$.
\begin{figure}[h]
	\centering
	\includegraphics[width=0.60\textwidth]{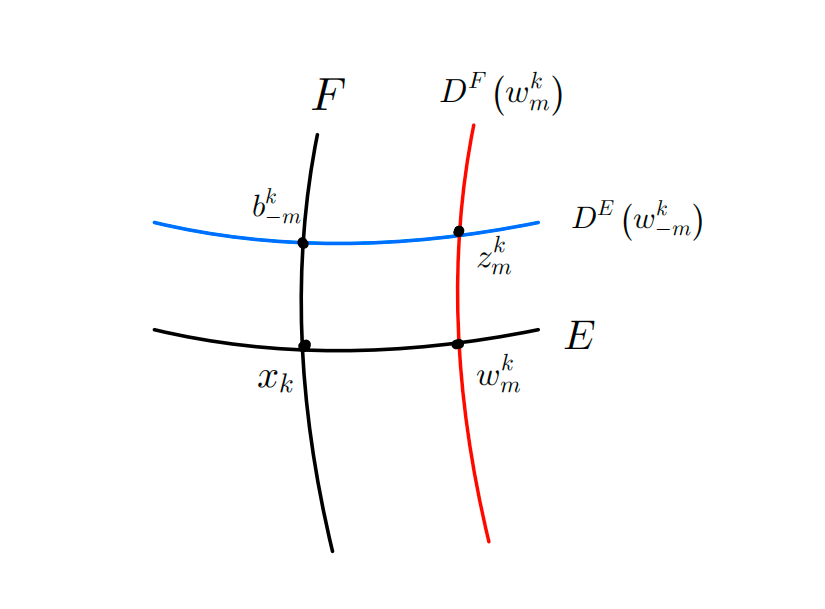}
	\caption{Positions of $w_m^k$ and $b_{-m}^k$}
\end{figure} 
Moreover,
$$
d(x_k,w_m^k)\le  \beta\left(\tau+\tau^2+\cdots+\tau^{m-k}\right)<\frac{1}{1-\tau}\cdot \beta<\frac{C_0}{1-\tau}\cdot \beta,
$$
$$
d(x_k,b_{-m}^k)\le C_0\cdot d(x_k,w_{-m}^k)\le C_0\cdot \beta\left(1+\tau+\cdots+\tau^{m+k-1}\right)<\frac{C_0}{1-\tau}\cdot \beta.
$$
Therefore, 
$$
d(w_{m}^k,z_m^k)\le C_0\cdot d(w_m^k,b_{-m}^k)\le C_0\cdot \left(d(w_m^k,x_k)+d(x_k,b_{-m}^k)\right)<\frac{2C_0^2}{1-\tau}\cdot \beta.
$$
Consequently,
$$
d(x_k,z_m^k)\le d(x_k,w_m^k)+d(w_m^k,z_m^k)\le \frac{3C_0^2}{1-\tau}\cdot \beta.
$$
Since $b_{-m}^k\in W^F_{r_1\cdot t^{-1}}(x_k,g)$ and $x_k\in\Lambda_t^\varepsilon$, the choice of $\lambda$ implies
 
\begin{equation}\label{41}
	d\left(g^i(x_k), g^i(b_{-m}^k)\right)\le t\cdot {\rm e}^{(\chi_F^+(\nu,g)+2\varepsilon)i}\cdot d(x_k,b_{-m}^k)\le t\cdot {\rm e}^{-\lambda\cdot i}\cdot \frac{C_0}{1-\tau}\cdot \beta.
\end{equation}
Similarly,  as    $g^{n_k}(b_{-m}^k)=w_{-m}^{k+1}\in   D^E(w_{-m}^{k+1})$ and $ g^{n_k}(z_m^k)=z_m^{k+1}\in   D^E(w_{-m}^{k+1})$, 
\begin{align*}
d(g^i(b_{-m}^k),g^i(z_m^k))\le & t\cdot {\rm e}^{-(\chi_E^-(\nu,g)-2\varepsilon)(n_k-i)}\cdot d(g^{n_k}(b_{-m}^k), g^{n_k}(z_m^k))\\
\le& t\cdot {\rm e}^{-\lambda(n_k-i)}\cdot d(w_{-m}^{k+1}, z_m^{k+1})\\
\le& t\cdot {\rm e}^{-\lambda(n_k-i)}\cdot\big(d(w_{-m}^{k+1},x_{k+1})+d(x_{k+1},z_m^{k+1})\big)\\
\le& t\cdot {\rm e}^{-\lambda(n_k-i)}\cdot \big(  \frac{\beta}{1-\tau} +\frac{3C_0^2}{1-\tau}\cdot \beta\big)\\
=&\frac{1+3C_0^2}{1-\tau}\cdot \beta\cdot t \cdot {\rm e}^{-\lambda(n_k-i)}.
\end{align*}
Combining above two estimates, together with the choice of $C_1$, we get
\begin{align*}
d(g^i(x_k),g^i(z_m^k))&\le  d(g^i(x_k),g^i(b_{-m}^k))+d(g^i(b_{-m}^k),g^i(z_m^k))\\
&\le t\cdot {\rm e}^{-\lambda\cdot i}\cdot \frac{C_0}{1-\tau}\cdot \beta+\frac{1+3C_0^2}{1-\tau}\cdot \beta\cdot t \cdot {\rm e}^{-\lambda(n_k-i)}\\
&\le C_1\cdot \beta\cdot {\rm e}^{-\lambda \min\{i,n_k-i\}}.
\end{align*}

\paragraph{Step 3.}
  we show that ${z_m^0}{m\ge 1}$ converges.

Let
$$
y_m^k=D^E(w_{-m}^k)\cap D^F(w_{m+1}^k),\quad \forall -m\le k\le m.
$$

\begin{figure}[h]
	\centering
	\includegraphics[width=0.55\textwidth]{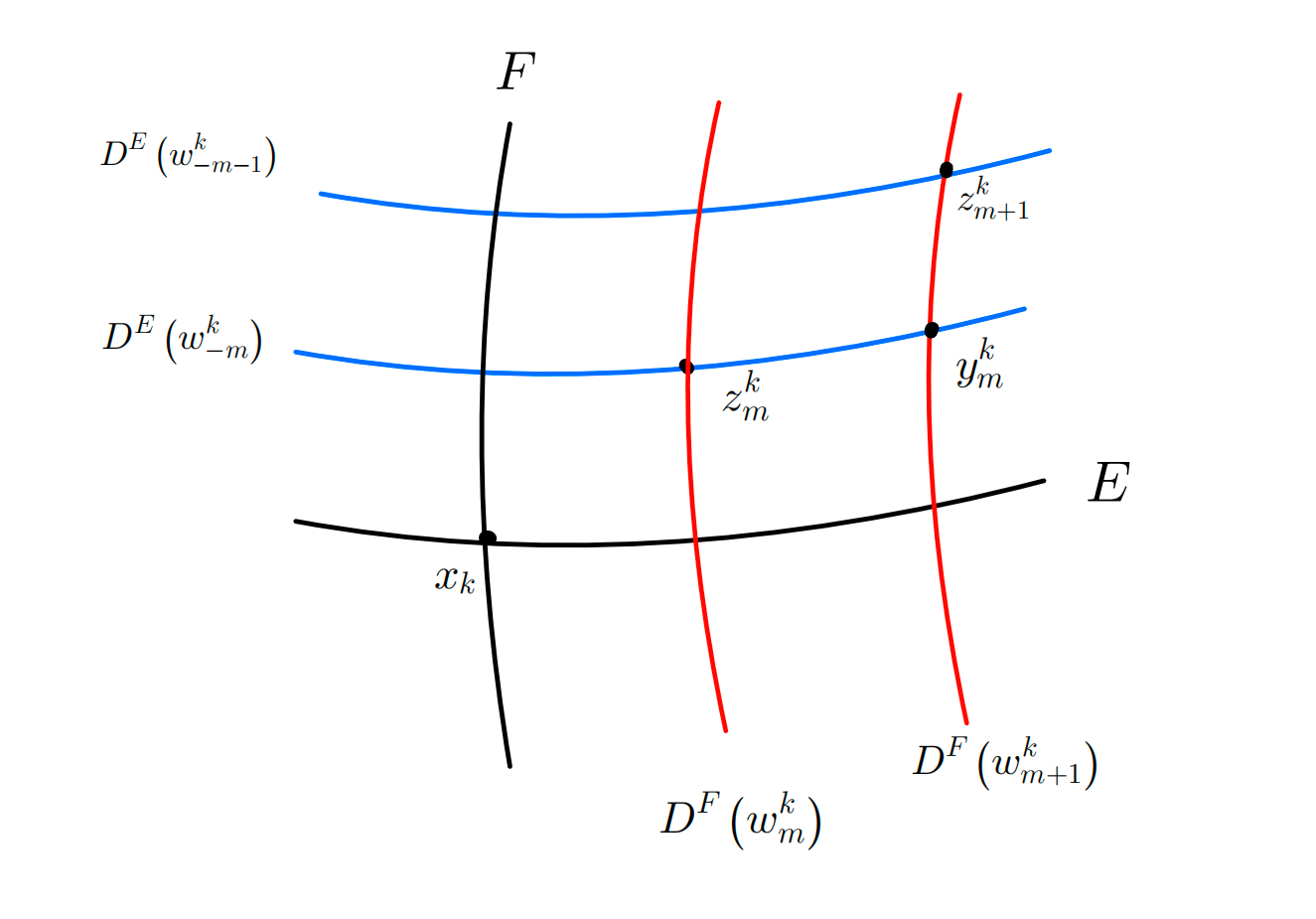}
	\caption{Position of $y_m^k$}
\end{figure} 

Similar to \textbf{Step 1}, we can prove $g^{n_k}(y_m^k)=y_m^{k+1}$. So, with the fact $z_m^{k+1},y_m^{k+1}\in D^E(w_{-m}^{k+1})$ we get
\begin{align*}
d(z_m^k,y_m^k)&=d\left(g^{-n_k}(z_m^{k+1}),g^{-n_k}(y_m^{k+1})\right)\\
&\le t\cdot {\rm e}^{-(\chi_E^-(\nu,g)-2\varepsilon)n_k}\cdot d(z_m^{k+1}, y_m^{k+1})\\
&\le \alpha \cdot d(z_m^{k+1}.y_m^{k+1}),
\end{align*}
recall that $\alpha=t\cdot {\rm e}^{-\lambda\cdot N_2}\in (0,1)$.
With the same manner, we can show 
$$
d(y_m^k,z_{m+1}^k)=d(g^{n_{k-1}}(y_m^{k-1}),g^{n_{k-1}}(z_{m+1}^{k-1}))\le \alpha\cdot d(y_m^{k-1},z_{m+1}^{k-1}).
$$
Combining these two estimates, we obtain 
\begin{align*}
d(z_m^0,z_{m+1}^0)&\le d(z_m^0,y_m^0)+d(y_m^0,z_{m+1}^0)\\
& \le  \alpha^m\cdot d(z_m^m,y_m^m)+\alpha^m\cdot d(y_m^{-m},z_{m+1}^{-m})\\
& \le  \alpha^m\cdot (2r_1\cdot t^{-1})\cdot 2.
\end{align*}
This suggests that $\{z_m^0\}_{m\ge 1}$ is a Cauchy sequence, thus converges to some point $z$ as $m\to +\infty$.
It follows from the results of \textbf{Step 1} and \textbf{Step 2} that for every $-m\le k\le m-1$ and $0\le j\le n_k$, it holds that 
$$
d\left(g^{s_k+j}(z_m^0),g^j(x_k)\right)=d\left(g^j(z_m^k),g^j(x_k)\right)\le C_1\cdot \beta\cdot {\rm e}^{-\lambda \min\{j,n_k-j\} },
$$
where
$$
\displaystyle
s_k=
\left\{
\begin{array}{ll}
  \sum_{i=0}^{k-1}n_i, &\quad \textrm{if}~~k>0, \\[2ex]
 \quad 0, &\quad \textrm{if}~~k=0,\\[2ex]
  -\sum_{i=k}^{-1}n_i, & \quad \textrm{if}~~k<0. 
\end{array}
\right.
$$
By letting $m\to \infty$, we then get 
$$
d(g^{s_k+j}(z),g^j(x_k))\le C_1\cdot \beta\cdot {\rm e}^{-\lambda \min\{j,n_k-j\}},\quad  \forall k\in \ZZ, \quad \forall 0\le j \le n_k.
$$
Now we show the uniqueness. Assume that both $z$ and $\tilde{z}$ satisfy the above inequality. Let $D^F(g^{s_m}(z))$ be an $\left(F,\theta_1,r_1\cdot t^{-1}, 0\right)$ admissible manifold near $g^{s_m}(z)$. Then, by
$$
d(g^{s_m}(z),g^{n_{m-1}}(x_{m-1}))\le C_1\beta<h_1,
$$
and using Proposition \ref{pro:adF1} inductively we see that there exists an $\big(F,\theta_1,r_1\cdot t^{-1},\beta(\tau+\cdots +\tau^{m-k-1}+\tau^{m-k}\cdot C_1)\big)$-admissible manifold $D^F(\xi_m^k)$ near $x_k$. Since $d(g^{s_k}(z),x_k)\le C_1\beta$ for every $k\in \ZZ$, by the construction of the admissible manifold we know that 
$$
g^{s_k}(z)\in D^F(\xi_m^k),\quad \forall k\le m.
$$ 
In the same manner, there exists a $\big(E,\theta_1,r_1\cdot t^{-1},\beta(1+\cdots+ \tau^{m-k-1}+\tau^{m-k}\cdot C_1)\big)$-admissible manifold $D^E(\xi_{-m}^k)$ near $x_k$, which satisfies 
$$
g^{s_k}(\tilde{z})\in D^E(\xi_{-m}^k),\quad \forall k\ge -m.
$$
Let $y=D^E(\xi_{-m}^{0})\cap D^F(\xi_{m}^0)$. Similar to the proof in \textbf{Step 1}, we deduce that 
$$
g^{s_k}(y)=D^E(\xi^k_{-m})\cap D^F(\xi^k_{m}),\quad \forall -m\le k \le m.
$$ 
Therefore, 
$$
d(g^{s_{k+1}}(y),g^{s_{k+1}}(z))\le t {\rm e}^{(\chi_F^+(\nu,g)+2\varepsilon)n_k}d(g^{s_k}(y),g^{s_k}(z))\\
\le \alpha\cdot d(g^{s_k}(y),g^{s_k}(z)).
$$
$$
d(g^{s_{k-1}}(y),g^{s_{k-1}}(\tilde{z}))\le t {\rm e}^{-(\chi_E^-(\nu,g)-2\varepsilon)n_k}d(g^{s_k}(y),g^{s_k}(\tilde{z}))\\
\le \alpha\cdot d(g^{s_k}(y),g^{s_k}(\tilde{z})).
$$
Inductively, we have 
$$
d(y,z)\le \alpha^m d(g^{s_{-m}}(y),g^{s_{-m}}(z))\le \alpha^m r_1\cdot t^{-1}.
$$
$$
d(y,\tilde{z})\le \alpha^m d(g^{s_m}(y),g^{s_m}(\tilde{z}))\le \alpha^m r_1\cdot t^{-1}.
$$
As a result, we obtain
$
d(z,\tilde{z})\le d(z,y)+d(y,\tilde{z})\le 2\alpha^m r_1\cdot t^{-1}.
$
Letting $m\to +\infty$, we get $z=\tilde{z}$ which gives the uniqueness.

\paragraph{Step 4.}
Now we show the result of the case that $\{(x_k,n_k)\}$ is periodic.

In this case, we have $x_k=x_0, n_k=n_0$ for every $k\in \ZZ$.
Thus, $x_0,g^{n_0}(x_0)\in \Lambda_t^{\varepsilon}$ with $d(x_0,g^{n_0}(x_0))<\beta$, and $n_0\ge N_2$. Put
$$
u_m^k=D^E(w_{-m}^{k+1})\cap D^F(w_m^k).
$$

\begin{figure}[h]
	\centering
	\includegraphics[width=0.55\textwidth]{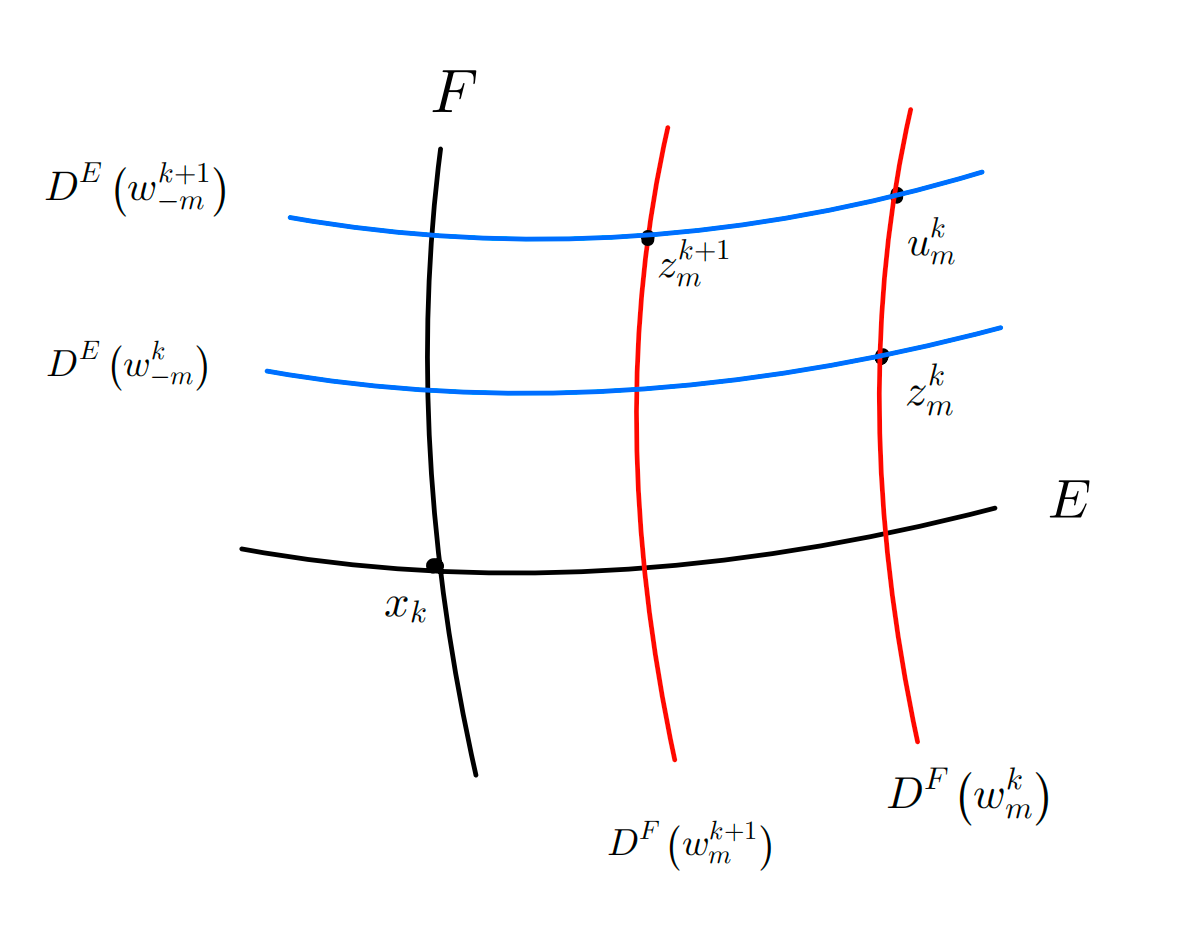}
	\caption{Position of $u_m^k$}
\end{figure}

Using the argument in \textbf{Step 3}, we can show
$$
d(z_m^{k+1},u_m^k)=d(g^{-n_{k+1}}(z_m^{k+2}), g^{-{n_{k+1}}}(u_m^{k+1}))\le \alpha\cdot d(z_m^{k+2},u_m^{k+1}),
$$
$$
d(u_m^{k},z_m^k)=d(g^{n_{k-1}}(u_m^{k-1}), g^{{n_{k-1}}}(z_m^{k-1}))\le \alpha\cdot d(u_m^{k-1},z_m^{k-1}).
$$
Consequently,
\begin{align*}
d(z_m^1,z_m^0)&\le d(z_m^1,u_m^0)+d(u_m^0,z_m^0)\\
&\le \alpha^{m-1}\cdot d(z_m^m, u_m^{m-1})+\alpha^m\cdot d(u_m^{-m},z_m^{-m})\\
&\le 2\cdot \alpha^{m-1}\cdot (2\cdot r_1\cdot t^{-1}).
\end{align*}
As $z_m^0\to z$, we then see $z_m^1\to z$ as $m\to \infty$.
Note also that $g^{n_0}(z_m^0)=z_m^1$, so $g^{n_0}(z)=z$. Now we show $z$ is a hyperbolic periodic point. Since
$$
d(g^iz,g^ix_0)\le C_1\cdot\beta<(1-\tau)h_1<r,\quad \forall 0\le i \le n_0-1,
$$
by Corollary \ref{cor:lambda-bl},  the choice of constants in \eqref{esteps}, and the choice of $N_2$, we obtain
\begin{align*}
m\left(Dg^{n_0}|_{E(z)}\right)\ge \prod_{i=0}^{n_0-1}m(Dg|_{E(g^iz)})
&\ge \prod_{i=0}^{n_0-1}\left({\rm e}^{-\varepsilon/2}m(Dg|_{E(g^ix_0)})\right)\\
&\ge t^{-1}{\rm e}^{(\chi_E^-(\nu,g)-3/2\varepsilon)n_0}\\
&\ge {\rm e}^{(\chi_E^-(\nu,g)-2\varepsilon)n_0}\\
&\ge {\rm e}^{\lambda \cdot n_0}.
\end{align*}
Similarly, we deduce that
$$
\|Dg^{n_0}|_{F(z)}\|\le \prod_{i=0}^{n_0-1}\|Dg|_{F(g^iz)}\|\le {\rm e}^{(\chi_F^+(\nu,g)+2\varepsilon)n_0}\le {\rm e}^{-\lambda \cdot n_0}.
$$
Since $\lambda>0$,  the desired result follows.
\end{proof}

\subsection{Shadowing and closing for $C^1$ diffeomorphisms without domination}
In this subsection, we prove Theorem \ref{main:shadowing}.
\begin{proof}[Proof of Theorem \ref{main:shadowing} ]

Choose $\varepsilon>0$ small enough such that
$$
0<\varepsilon<\min\left\{\frac{1}{10}\chi_E^-(\mu,f), -\frac{1}{10}\chi_F^+(\mu,f),\frac{1}{10}\left(\chi_E^-(\mu,f)-\chi_F^+(\mu,f)\right)\right\}.
$$
Let $\lambda=\min\left\{\chi_E^-(\mu,f)-\varepsilon, -(\chi_F^+(\mu,f)+\varepsilon)\right\}$.
By Proposition \ref{pro:ftoblock}, there exist  $N\in \NN$ and an ergodic component $\nu$ of   $\mu$ w.r.t. $g:=f^N$ with the following property: there exist resonance blocks $\Lambda_t^{\delta}$ ($\delta=N\varepsilon/2$) w.r.t. $(g,\nu, E\oplus F)$ such that 
$
\nu\left(\bigcup_{t\ge 1}\Lambda_t^{\delta}\right)=1.
$
Since 
$\nu,f_{\ast}\nu,\dots,f_{\ast}^{N-1}\nu$ are ergodic components of $\mu$ w.r.t. $g=f^N$, they are mutually singular.  
We may choose an $f^N$-invariant subset $R^\nu \subset \bigcup_{t \ge 1} \Lambda_t^{\delta}$ with $\nu(R^\nu) = 1$, such that the sets
\[
R^\nu, f(R^\nu), \dots, f^{N-1}(R^\nu)
\]
are pairwise disjoint. We then choose compact subsets $\widehat{\Lambda}_t \subset \Lambda_t^{\delta} \cap R^\nu$ such that
\begin{equation}\label{Lambda disjoint}
\nu(\Lambda_t^{\delta} \setminus \widehat{\Lambda}_t) < \frac{1}{t}.
\end{equation}
Hence, the following hold:
\begin{itemize}
\item[---]
$
\nu\left(\bigcup_{t\ge 1}\widehat{\Lambda}_t\right)=1.
$
\item[---] $\widehat{\Lambda}_t,\dots,f^{N-1}(\widehat{\Lambda}_t)$ are pairwise disjoint.
\end{itemize}
Let 
$$
Q_t=\bigcup_{i=0}^{N-1}f^i(\widehat{\Lambda}_t)
$$
Then 
\[\mu\left(\bigcup_{t\ge 1}Q_t\right)=1. \]
%

We show the shadowing property on $Q_t.$
Choose $\widehat{\beta}=\widehat{\beta}(t)>0$ such that 
$$
\widehat{\beta}<d(f^i(\widehat{\Lambda}_t), f^j(\widehat{\Lambda}_t)),\quad \forall 0\le i,j\le N-1, i\neq j,
$$
and
$
\|Df^{-1}\|^N\cdot \widehat{\beta} <\beta_0,
$
where $\beta_0$ is given by Theorem \ref{th:shaodowg}. Let $\widehat{N}=(N_2+1)N,$ where $N_2$ is given by Theorem \ref{th:shaodowg}.
Now fix $0 < \beta < \widehat{\beta}$, and let $\{(x_k, n_k)\}_{k \in \mathbb{Z}}$ be a $\beta$-pseudo-orbit for $f$ contained in $Q_t$ with $n_k>\widehat{N}$. For each $k \in \mathbb{Z}$, there exists a unique $p_k \in \{0, 1, \dots, N-1\}$ such that $f^{n_k}(x_k) \in f^{p_k}(\widehat{\Lambda}_t)$. Since $d(f^{n_k}(x_k), x_{k+1}) < \beta < \widehat{\beta}$ and   the minimal distance condition implies that $x_{k+1}$ must also belong to $f^{p_k}(\widehat{\Lambda}_t)$. Thus   $f^{-p_k}(x_{k+1})\in \widehat{\Lambda}_t$ and hence  $f^{-p_{k-1}}(x_k)\in \widehat{\Lambda}_t$.
    It follows from $f^{-p_{k-1}}(x_k), f^{n_k-p_k}(x_k)\in \widehat{\Lambda}_t\subset R^{\nu}$,  $R^{\nu}=f^N(R^{\nu})$  and the sets $R^{\nu}, f(R^{\nu}),\dots, f^{N-1}(R^{\nu})$ are pairwise disjoint that
$$
N\Big|n_k-p_k+p_{k-1}.
$$
Note that 
$$
d(f^{n_k-p_k}(x_k), f^{-p_k}(x_{k+1}))\le \|Df^{-1}\|^N\cdot d(f^{n_k}(x_k), x_{k+1})<\|Df^{-1}\|^N\cdot \beta<\beta_0.
$$
This shows that $\left\{(f^{-p_{k-1}}(x_k), \dfrac{n_k-p_k+p_{k-1}}{N})\right\}_{k\in \ZZ}$ is a $\|Df^{-1}\|^N\cdot \beta$ pseudo-orbit for $g$ in $\widehat{\Lambda}_t\subset \Lambda_t^{\delta}$. Let 
$$
 \widehat{n}_k=\frac{n_k-p_k+p_{k-1}}{N}>N_2, \quad \widehat{x}_k=f^{-p_{k-1}}(x_k).
$$ 
 
Observe that 
$$
0<\delta=\frac{N\varepsilon}{2}<\min\left\{\frac{1}{10}\chi_E^-(\nu,g), -\frac{1}{10}\chi_F^+(\nu,g),\frac{1}{10}\left(\chi_E^-(\nu,g)-\chi_F^+(\nu,g)\right)\right\}.
$$
Applying Theorem \ref{th:shaodowg} to $g$ and $\Lambda_t^{\delta}$, there exists a unique $\widehat{z}\in M$ such that for every $0\le j \le \widehat{n}_k$ such that 
$$
d(g^{\widehat{s}_k+j}(\widehat{z}), g^j(\widehat{x}_k))\le C_1\cdot \beta \cdot {\rm e}^{-\lambda\cdot N\cdot \min\{j,\widehat{n}_k-j\}},
$$
where 
$$
\displaystyle
\widehat{s}_k=
\left\{
\begin{array}{ll}
  \sum_{i=0}^{k-1}\widehat{n}_i, &\quad \textrm{if}~~k>0, \\[2ex]
 \quad 0, &\quad \textrm{if}~~k=0,\\[2ex]
  -\sum_{i=k}^{-1}\widehat{n}_i, & \quad \textrm{if}~~k<0. 
\end{array}
\right.
$$
Let $z=f^{p_{-1}}(\widehat{z})$. Then, for every $0\le \ell \le n_k-p_k$, there exists $0\le j \le \widehat{n}_k$ such that $$jN\le \ell +p_{k-1}<(j+1)N,$$ and     we have the estimate
\begin{align*}
d(f^{s_k+\ell}(z), f^{\ell}(x_k))&=d\left(f^{\ell+p_{k-1}}\circ g^{\widehat{s}_k}(\widehat{z}), f^{\ell+p_{k-1}}(\widehat{x}_k)\right)\\
&\le \|Df\|^N\cdot d\left(g^j\circ g^{\widehat{s}_k}(\widehat{z}), g^j (\widehat{x}_k)\right)\\
&\le C_1\cdot \|Df\|^N\cdot \beta \cdot {\rm e}^{-\lambda\cdot N\cdot \min\{j,\widehat{n}_k-j\}}\\
&\le  {C_2}\cdot \beta\cdot {\rm e}^{-\lambda\cdot \min\{\ell,n_k-\ell\}},
\end{align*}
where $C_2=C_1\cdot \|Df\|^N\cdot {\rm e}^{\lambda\cdot N}$. For $n_k-p_k< \ell \le n_k$, 
\begin{align*}
	d(f^{s_k+\ell}(z), f^{\ell}(x_k))&\le   \|Df\|^N\cdot d(f^{s_k+n_k-p_k}(z), f^{n_k-p_k}(x_k))\\
	&\le  \|Df\|^N\cdot{C_2}\cdot \beta\cdot {\rm e}^{-\lambda\cdot \min\{n_k-p_k,p_k\}}\\
	&\le \widehat{C}\cdot  \beta\cdot {\rm e}^{-\lambda\cdot \min\{\ell,n_k-\ell\}},
\end{align*}
where $\widehat{C}=C_2\cdot \|Df\|^N\cdot {\rm e}^{\lambda\cdot N}$. Therefore, for every $0\le \ell \le n_k$, 
\begin{align*}
	d(f^{s_k+\ell}(z), f^{\ell}(x_k)) \le \widehat{C}\cdot  \beta\cdot {\rm e}^{-\lambda\cdot \min\{\ell,n_k-\ell\}}.
\end{align*}
This completes the proof.
\end{proof}
 

\section{Horseshoe's approximation}
In this section, we prove Theorem \ref{main:horseshoe}.
 
\begin{proof}[Proof of theorem \ref{main:horseshoe}]
	For any sufficient small $\varepsilon>0,$ by Proposition \ref{pro:ftoblock}, there exist  $N\in \NN$ and an ergodic component $\nu$ of   $\mu$ w.r.t. $g:=f^N$ with the following property: there exist resonance blocks $\Lambda_t^{N\varepsilon/2}$   w.r.t. $(g,\nu, E\oplus F)$ such that 
	$
	\nu\left(\bigcup_{t\ge 1}\Lambda_t^{N\varepsilon/2}\right)=1.
	$
	We may choose an $f^N$-invariant subset $R^\nu \subset \bigcup_{t \ge 1} \Lambda_t^{N\varepsilon/2}$ with $\nu(R^\nu) = 1$, such that the sets
	\[
	R^\nu, f(R^\nu), \dots, f^{N-1}(R^\nu)
	\]
	are pairwise disjoint. 
And then choose compact subsets $\widehat{\Lambda}_t \subset \Lambda_t^{N\varepsilon/2}\bigcap R^\nu,$ as in \eqref{Lambda disjoint} such that $\widehat{\Lambda}_t\subset \Lambda_t^{N\varepsilon/2}$ and $\nu(\widehat{\Lambda}_t)>0$.

Denote by $d_n(x,y)=\max_{0\le k \le n-1}d(f^kx,f^ky)$ the dynamical distance on $M$. Let $B_n(x,\rho)=\{y:d_n(x,y)<\rho\}$ be the $d_n$-ball centered at $x$ of radius $\rho$. Let $N_{\mu}(n,\rho,\delta)$ be the minimal number of $d_n$-balls whose union has measure at least $\delta$. By Katok \cite{Ka80}, for every $\delta>0$,
$$
h_{\mu}(f)=\lim_{\rho\to 0}\liminf_{n\to +\infty}\frac{1}{n}\log N_{\mu}(n,\rho,\delta).
$$

 For any fixed $t\ge 1$, let 
$\Omega_t=\widehat{\Lambda}_t\cap {\rm supp}(\mu)$. Take $\delta=\frac{1}{2}\mu(\Omega_t)=\frac{1}{2}\mu(\widehat{\Lambda}_t)>0$. Then there exists $\rho_1>0$, $N_3\ge 1$ such that for every $0<\rho <\rho_1$ and $n\ge N_3$, it holds that 
\begin{equation}\label{enest1}
N_{\mu}(n,\rho,\delta)\ge {\rm e}^{(h_{\mu}(f)-\varepsilon)n}.
\end{equation}
Recall that the distance $D$ of $\cM_f(M)$ is given by
$$
D(\mu_1,\mu_2)=\sum_{j=1}^{\infty}\frac{|\int \varphi_jd\mu_1-\int \varphi_jd\mu_2|}{2^j}, \quad \forall \mu_1,\mu_2\in \cM_f(M),
$$
where $\{\varphi_j\}_{j=1}^{\infty}$ is the set of dense subset of the unit sphere in $C(M)$.
Take $J$ sufficiently large such that $1/2^J<\varepsilon/8$. Then we choose 
$$
0<\rho <\min\left\{\frac{\rho_1}{2\widehat{C}},~\widehat{\beta},~\frac{\varepsilon}{2\widehat{C}}\right\},
$$
to satisfy
\begin{equation}\label{phi J}
|\varphi_j(x)-\varphi_j(y)|\le \frac{\varepsilon}{4}, \quad \forall 1\le j \le J,
\end{equation}
whenever $d(x,y)\le \widehat{C}\cdot \rho$, where $\widehat{\beta},\widehat{C}$ are constant given in Theorem \ref{main:shadowing}.
Let $\cP=\{P_1,\dots,P_{l}\}$ be a partition of $\Omega_t$, with ${\rm diam}(P_i)\le \rho$ for each $1\le i \le l$.

Let
\begin{align*}
\Omega_{t,n} := \Bigg\{ x\in\Omega_t: 
& \exists\, n\le k\le (1+\varepsilon)n~\text{ such that } f^k(x)\in\mathcal{P}(x), \\
& \{f^k(x)\}_{1\leq k\leq n} \text{ is }\varepsilon/2\text{-dense in}~ {\rm supp}(\mu), \text{ and } \\
& \left|\frac{1}{m}\sum_{i=0}^{m-1}\varphi_j(f^ix)-\int\varphi_j\;d\mu\right| \leq\frac{\varepsilon}{4},\quad 
  \forall m\geq n,\quad 1\leq j\leq J \Bigg\}.
\end{align*}
Observe that $\lim_{n\to \infty}\mu(\Omega_{t,n})=\mu(\Omega_t)$ (see \cite[P.17]{CZ23} for instance). Take 
$$
n>\max\left\{\frac{ \log l}{\varepsilon},\frac{2\log t}{\varepsilon},\frac{1}{6\varepsilon}, \widehat{N}\right\}
$$
such that $\mu(\Omega_{t,n})>\frac{1}{2}\mu(\Omega_t)=\delta$ and $(n\varepsilon+1)<{\rm e}^{\varepsilon n}$, where $\widehat{N}$ is given by Theorem \ref{main:shadowing}.
Let $E$ be an $(n,2\widehat{C}\rho)-$ separated set of $\Omega_{t,n}$ of maximal cardinality. Then
$$
\bigcup_{x\in E}B_n(x,2\widehat{C}\rho)\supset \Omega_{t,n}.
$$
 By \eqref{enest1},  
  \[  {\#}(E)\geq N_\mu(n,2\widehat{C}\rho,\mu(\Omega_{t,n}))\geq N_\mu(n,2\widehat{C}\rho,\delta)\geq {\rm e}^{(h_\mu(f)-\varepsilon)n}. \]
  For $ k\in [n, (1+\varepsilon)n]$, let $F_k=\{x\in E:f^k(x)\in \mathcal{P}(x)\}$. Then  by the definition of  $\Omega_{t,n},$
  \[E=\bigcup_{k\in [n, (1+\varepsilon)n]}F_k.\]
   Choose $m\in [n, (1+\varepsilon)n]$ such that $\#(F_m)=\max \{\#(F_k):n\leq k\leq n(1+\varepsilon) \}.$  Then  
   \[\#(F_m)\geq \frac{1}{n\varepsilon+1}\#(E)\geq  \frac{1}{n\varepsilon+1}{\rm e}^{(h_\mu(f)-\varepsilon)n}\geq  {\rm e}^{(h_\mu(f)-2\varepsilon)n}.\]
   Take $P\in \mathcal{P}$ such that $\#(F_m\cap P)=\max \{\#(F_m\cap P_i):1\leq i\leq l\}.$ Then
   \[\#(F_m\cap P)\geq \frac{1}{l}\#(F_m)\geq  {\rm e}^{(h_\mu(f)-3\varepsilon)n}.\]
   Without loss of generality, we may  assume 
   \begin{equation}\label{40}
   	 {\rm e}^{(h_\mu(f)-3\varepsilon)n}\leq \#(F_m\cap P)\leq {\rm e}^{(h_\mu(f)+3\varepsilon)n}.
   \end{equation}
   Indeed, if this inequality does not hold, observe that  $${\rm e}^{(h_\mu(f)+3\varepsilon)n}- {\rm e}^{(h_\mu(f)-3\varepsilon)n}={\rm e}^{(h_\mu(f)-3\varepsilon)n}({\rm e}^{6\varepsilon n}-1)>1,$$
    so we may always select a subset of  $F_m\cap P$ for which \eqref{40} is satisfied.
    
    Now we construct the  desired horseshoe. Let $X=(F_m\cap P)^\mathbb{Z}.$  
Given that $\text{diam}(P) \leq \rho < \widehat{\beta}$, one can associate to any $\bar{x} = (x_i)_{i \in \mathbb{Z}} \in X$ a unique $\rho$-pseudo-orbit   $\{(x_i, m)\}_{i \in \mathbb{Z}}$ in $\widehat{\Lambda}_t$. Note that $\widehat{\Lambda}_t\subset Q_t=\bigcup_{i=0}^{N-1}f^i(\widehat{\Lambda}_t)$. By Theorem \ref{main:shadowing}, there exists a unique point $\pi(\bar{x})\in M$ whose orbit ($\widehat{C}\rho,\lambda$)-shadows   $\{(x_i, m)\}_{i \in \mathbb{Z}}$ for some $\lambda>0$, which thereby defines    a map $\pi:X\to M$. We now prove the injectivity of $\pi$. Let $\{{(x_i, m)}\}_{i \in \mathbb{Z}}$ and $\{{(\widetilde{x}_i, m)}\}_{i \in \mathbb{Z}}$ be two $\rho$-pseudo-orbits that are both $(\widehat{C}\rho,\lambda)$-shadowed by the same orbit $\mathcal{O}(y)$. Suppose, for contradiction,  that $x_{i_0}\neq \widetilde{x}_{i_0}$ for some $i_0\in \mathbb{Z}.$ Then by Theorem \ref{main:shadowing}, 
\[d_m\left(x_{i_0},f^{i_0m}(y)\right)\le \widehat{C}\rho, \quad d_m\left(\widetilde{x}_{i_0},f^{i_0m}(y)\right)\le \widehat{C}\rho. \]
It follows that $d_m(x_{i_0},\widetilde{x}_{i_0})\le 2\widehat{C}\rho$, which contradicts the fact that $x_{i_0},\widetilde{x}_{i_0}\in F_m$ are $(n,2\widehat{C}\rho)$-separated.

 Let $K^*=\pi(X).$  Then $f^m|_{K^*}$ is conjugate to   a full shift in $\#(F_m\cap P)$-symbols. Let $$K_\varepsilon=\bigcup_{i=0}^{m-1}f^i(K^*).$$
 Then $K_\varepsilon$ is a horseshoe. We show that $K_\varepsilon$ satisfies the conclusions \eqref{ho1}-\eqref{ho4} in Theorem \ref{main:horseshoe}.
 
 \begin{enumerate}[(1)]
 \item Since
 $$h_{\text {\em top}}(f|_{{K_\varepsilon}})=\frac{1}{m}h_{\text {\em top}}(f^m|_{K^*})=\frac{1}{m}\log \#(F_m\cap P),$$
 by \eqref{40} and $n\le m\le (1+\varepsilon)n$,  we obtain
 $$h_{\text {\em top}}(f|_{K_\varepsilon})\geq    \frac{n}{m}(h_\mu(f)-3\varepsilon)\ge h_\mu(f)-( h_\mu(f)+3)\varepsilon,$$
 and 
 \[ h_{\text {\em top}}(f|_{K_\varepsilon})\leq   \frac{n}{m}(h_\mu(f)+3\varepsilon)\leq h_\mu(f)+3\varepsilon.\]
 
\item  By the construction of $K_\varepsilon$,  for any $y\in K_\varepsilon,$ there exist $x\in F_m\cap P$ and $ 0\leq k\leq m-1$  such that  $d(y,f^kx)\leq \widehat{C}\rho<\varepsilon/2.$ It follows from $x\in \Omega_t\subset\text{supp}(\mu)$ that $K_\varepsilon\subset B(\text{supp}(\mu), \varepsilon/2).$  On the other hand, given any $\widetilde{x}\in F_m\cap P\subset \Omega_{t,n}$, by the construction of $\Omega_{t,n}$,  for any $z\in \text{supp}(\mu)$, there exists $1\leq k\leq n$  such that $d(z,f^k\widetilde{x})\leq \varepsilon/2$.
 Take $(x_i)_{i \in \mathbb{Z}} \in X$ with $x_0=\widetilde{x}$.  Let $y\in K^*$ be a point whose orbit $\widehat{C}\rho$-shadows   $\{(x_i, m)\}_{i \in \mathbb{Z}}$. Then $d(f^k\widetilde{x},f^ky)\leq \widehat{C}\rho<\varepsilon/2$. Therefore, $d(z,f^ky)< \varepsilon$, which implies $\text{supp}(\mu)\subset B(K_\varepsilon, \varepsilon/2).$   Hence $d_H(K_\varepsilon,\text{supp}(\mu))< \varepsilon.$

\item For any $f$-invariant measure $\nu$ supported on $K_\varepsilon$, we may first assume that $\nu$ is ergodic. Since 
 $$D(\mu,\nu)\leq \sum\limits_{j=1}^{J}\frac{|\int \varphi_jd\mu-\int \varphi_jd\nu|}{2^j}+\frac{1}{2^{J-1}}\leq \sum\limits_{j=1}^{J}\frac{|\int \varphi_jd\mu-\int \varphi_jd\nu|}{2^j}+\frac{\varepsilon}{4},$$
it suffices to show that $|\int \varphi_jd\mu-\int \varphi_jd\nu|\leq \dfrac{3}{4}\varepsilon$ for all $ 1\leq j\leq J.$ Take $y \in K^*$ and $s\ge 1$ sufficiently large such that
\[
\left| \frac{1}{ms} \sum_{k=0}^{ms-1} \varphi_j(f^k y) - \int \varphi_j \, d\nu \right| \leq \frac{\varepsilon}{4}, \quad \forall 1 \leq j \leq J.
\]
Then  by the construction of $K^*$, there exist points $(x_i)_{i \in \mathbb{Z}} \in X= (F_m \cap P)^\mathbb{Z}$ such that
\[
d(f^{mi + j} y, f^j x_i) \leq \widehat{C}\rho, \quad \forall \ 0 \leq j \leq m,~ i\in\mathbb{Z}.
\]

By \eqref{phi J} and the construction of $\Omega_{t,n}$, we conclude that
\[
\left| \int \varphi_j \, d\mu - \int \varphi_j \, d\nu \right| \leq \frac{3}{4} \varepsilon, \quad  \forall 1 \leq j \leq J,
\]
which implies $D(\mu, \nu) \leq \varepsilon$.
 
In general, if $\nu$ is not ergodic, then by the ergodic decomposition theorem, $\nu$-almost every ergodic component is supported on $K_\varepsilon$. Consequently,
 \begin{align*}
 	D(\mu,\nu)  &= \sum\limits_{j=1}^{\infty}\frac{1}{2^j}\left|\int \varphi_jd\mu-\int \varphi_jd\nu\right|\\ 
 	& =  \sum\limits_{j=1}^{\infty}\frac{1}{2^j}\left|\int\left(\int \varphi_jd\mu-\int \varphi_jd\nu_x\right)d\nu(x)\right| \\
 	&\leq   \int D(\mu,\nu_x) d\nu(x)  \\
 	& \leq \varepsilon.
 \end{align*}

\item For any $x\in F_m \cap P\subset \Omega_{t,n},$   it holds that $f^m(x)\in P\subset \widehat{\Lambda}_t.$     By the definition of  $\widehat{\Lambda}_t,$ one has  $\widehat{\Lambda}_t \subset \Lambda_t^{N\varepsilon/2}\bigcap R^\nu.$    Since $R^{\nu}$ is an $f^N$-invariant subset  such that $R^{\nu}, f(R^{\nu}), \dots, f^{N-1}(R^{\nu})$ are pairwise disjoint, we conclude that $N|m.$ Writing $m = kN$ and applying Corollary~\ref{cor:lambda-bl} to $g = f^N$ and $\Lambda_t^{N\varepsilon/2}$, we conclude that
\begin{align*}
	m(Df^m|_{E(x)}) & \ge \prod_{i=0}^{k-1}m\left(Df^N|_{E(f^{iN}x)}\right)\\
	                                &\ge t^{-1}\cdot {\rm e}^{(\chi_E^-(\mu,f)- \varepsilon/2)kN}\\
	                                &\ge {\rm e}^{(\chi_E^-(\mu,f)-\varepsilon)m},
\end{align*}
and 
\begin{align*}
	\|Df^m|_{F(x)}\| & \le \prod_{i=0}^{k-1}\|Df^N|_{F(f^{iN}x)}\|\\
	&\le t \cdot {\rm e}^{(\chi_F^+(\mu,f)+\varepsilon/2)kN}\\
	&\le {\rm e}^{(\chi_F^+(\mu,f)+\varepsilon)m}.
\end{align*}
\end{enumerate}
This completes the proof of  Theorem \ref{main:horseshoe}.
\end{proof}

\section{Approximation of Lyapunov exponents}
Let $f\in {\rm Diff}^1(M)$ and $\mu$ be a hyperbolic ergodic measure.  Suppose that  the Oseledets splitting $T_{\Gamma}M=E_1\oplus \cdots \oplus E_{\ell}$ is extendable continuous. Denote by   $\chi_1 > \cdots > \chi_\ell $   the  Lyapunov exponents of $
\mu$.  Let 
$$
\varepsilon_0=\min_{1\le i\le \ell}\left\{\frac{|\chi_i|}{10},\frac{| \chi_{i+1}-\chi_i|}{10} \right\}
$$ 
and we may also assume $B(\supp(\mu),\varepsilon_0/2)\subset \mathcal{U}$.   For any $0<\varepsilon<\varepsilon_0$ and $N\ge 1$, let   
\begin{align*}
	a^1_n( \varepsilon,x,  N)=&\sup_{\substack{0\le k \le n\\1\le j\le \ell}}\left\{  \frac{\prod_{i=n-k}^{n-1}\|Df^N|_{E_j(f^{iN}x)}\|}{{\rm e}^{(\chi_j+\varepsilon)kN}}, ~~ \frac{{\rm e}^{(\chi_j -\varepsilon)kN}}{   \prod_{i=n-k}^{n-1}m(Df^N|_{E_j(f^{iN}x)})}\right\}, \\
	a^2_n(\varepsilon,x,  N)=&\sup_{\substack{k\ge 0\\1\le j\le \ell}}\left\{  \frac{\prod_{i=n}^{n+k-1}\|Df^N|_{E_j(f^{iN}x)}\|}{{\rm e}^{(\chi_j+\varepsilon)kN} }, ~~ \frac{{\rm e}^{(\chi_j -\varepsilon)kN}}{   \prod_{i=n }^{n+k-1}m(Df^N|_{E_j(f^{iN}x)})}\right\}.
\end{align*}
where we set $\prod_{i=n}^{n-1}a_i=1$ for any sequence $\{a_i\}$ of real numbers.
For any $t\ge 1$, define
$$
H_t(\varepsilon,x,  N)=\left\{n\in \NN: a^i_n(\varepsilon,x,  N)\le t, \quad \forall 1\le i \le 2 \right\}.
$$
and 
$$
\Lambda_t(\varepsilon,x,  N)=\bigcap_{m\ge 1}\overline{\bigcup_{j\ge m}\left\{g^j(x):j\in H_t(\varepsilon,x,  N)\right\}}.
$$
Then similar to the proof of   Proposition  \ref{pro:ftoblock},   there exist $N\in \NN$, an ergodic component $\nu$ of $\mu$ w.r.t. $f^N$ and  $x_0\in M $  such that     
$$
\nu\left(\bigcup_{t\ge 1}\Lambda_t(\varepsilon,x_0,  N)\right)=1.
$$

For simplicity, denote $g=f^N$, $\Lambda_t=\Lambda_t(\varepsilon,x_0,  N)$ and let $\mathcal{R}^\nu=\bigcup_{t\ge 1}\Lambda_t$. For $x\in \mathcal{R}^\nu$, define 
$$
T(x)=\inf\left\{t\ge 1: x\in \Lambda_t\right\}.
$$
Then the same argument of  Lemma \ref{T measurable} and  \ref{lem:temp} gives 
\begin{lemma}
	$T$ is measurable on $\mathcal{R}^\nu$ and   $T$ is tempered, that is, 
	$$
	\lim_{n\to \pm\infty}\frac{1}{n}\log T(g^{n}(x))=0,\quad \nu-\textrm{a.e.}~x\in \mathcal{R}^\nu.
	$$
\end{lemma}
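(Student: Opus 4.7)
My plan is to mimic the two preceding results (Lemma \ref{T measurable} and Lemma \ref{lem:temp}) with the obvious modifications needed to handle $\ell$ Oseledets subbundles rather than just two. The structural definitions $a_n^1$, $a_n^2$, $H_t$, $\Lambda_t$ in the present multi-exponent setting satisfy exactly the same formal monotonicity and one-step recurrence properties as in the two-bundle case, so essentially no new idea is required; only the auxiliary functions need to be redefined to capture all $\ell$ bundles simultaneously.

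For measurability, I would first introduce $\widetilde{\Lambda}_t=\bigcap_{s>t}\Lambda_s$ and $\widetilde{T}(x)=\inf\{t\ge 1: x\in\widetilde{\Lambda}_t\}$ exactly as in the proof of Lemma \ref{T measurable}. The monotonicity $\Lambda_{t_1}\subset\Lambda_{t_2}$ for $t_1<t_2$ follows immediately from the definition of $H_t(\varepsilon,x_0,N)$ (enlarging $t$ only enlarges the admissible index set), so the same two inclusions give $T(x)=\widetilde{T}(x)$. Since each $\widetilde{\Lambda}_{t_0}$ is a countable intersection of compact sets, hence closed, the sublevel set $\{T\le t_0\}=\widetilde{\Lambda}_{t_0}$ is closed, making $T$ lower semi-continuous and therefore Borel measurable on $\mathcal{R}^{\nu}$.

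For temperedness, I would first derive the one-step recursions analogous to \eqref{eq:an1}--\eqref{eq:an2}: for every $n$,
\begin{align*}
a^1_{n+1}(\varepsilon,x_0,N)&=\max\Bigl\{1,\,B_n\cdot a^1_n(\varepsilon,x_0,N)\Bigr\},\\
a^2_n(\varepsilon,x_0,N)&=\max\Bigl\{1,\,B_n\cdot a^2_{n+1}(\varepsilon,x_0,N)\Bigr\},
\end{align*}
where
\[
B_n=\max_{1\le j\le\ell}\left\{\frac{\|Df^N|_{E_j(g^n x_0)}\|}{e^{(\chi_j+\varepsilon)N}},\,\frac{e^{(\chi_j-\varepsilon)N}}{m(Df^N|_{E_j(g^n x_0)})}\right\}.
\]
Setting $b(x):=\max_{1\le j\le\ell}\bigl\{\|Df^N|_{E_j(x)}\|/e^{(\chi_j+\varepsilon)N},\,e^{(\chi_j-\varepsilon)N}/m(Df^N|_{E_j(x)})\bigr\}$, which is continuous on $\mathcal{U}$ thanks to the continuous extension of the Oseledets splitting, I can then repeat the accumulation argument in Lemma \ref{lem:temp}: pick a sequence $n_k\in H_{T(x)+\delta}$ with $g^{n_k}(x_0)\to x$, pass to the limit in the two recursions, let $\delta\downarrow 0$, and conclude
\[
T(g(x))\le\max\bigl\{1,\,\max\{b(x),b(x)^{-1}\}\cdot T(x)\bigr\},
\]
so that $\bigl(\log T(g(x))-\log T(x)\bigr)^+\le|\log b(x)|$.

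The main (and only real) obstacle is bookkeeping: one has to verify that taking a maximum over $\ell$ bundles rather than two does not destroy the monotone recursion, and that all the continuity estimates used in Lemma \ref{lem:temp} remain uniform on the compact set $\overline{\Gamma}\subset\mathcal{U}$. Once this is done, $|\log b|\in L^1(\nu)$ because $b$ is continuous and $\overline{\Gamma}$ is compact; symmetrically, $\bigl(\log T(g^{-1}(x))-\log T(x)\bigr)^-\in L^1(\nu)$ by the $g$-invariance of $\nu$. An application of \cite[Lemma 3.4.3]{Ar98} then yields
\[
\lim_{n\to\pm\infty}\frac{1}{|n|}\log T(g^n(x))=0\quad\text{for $\nu$-a.e.\ }x\in\mathcal{R}^{\nu},
\]
which completes the proof.
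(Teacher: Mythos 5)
Your proposal is correct and is exactly the paper's intended argument: for this lemma the authors write only that ``the same argument of Lemma \ref{T measurable} and Lemma \ref{lem:temp} gives'' the result, and you have spelled that out faithfully, with the only adaptation being the replacement of the two cone quantities by a maximum over the $\ell$ Oseledets subbundles in the definitions of $a^i_n$, $b$ and the one-step recursions. The continuity of $b$ (and hence integrability of $\log b$ on the compact $\overline{\Gamma}$) follows from the assumed continuous extension of the Oseledets splitting, just as you say, so the Arnold temperedness lemma applies verbatim.
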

Moreover, similar to the proof of Lemma \ref{pro:fulles}, $T(x)$ also satisfies the following property.
\begin{proposition}\label{Liaopesin}
	For every $x\in \mathcal{R}^\nu$, $k\ge 1$ and $1\le j\le \ell$, one has
	\begin{enumerate}[(1)]
		\item\label{la:p1} 
		$
	 T(x)^{-1}\cdot {\rm e}^{(\chi_j-\varepsilon)kN}\le 	\prod_{i=-k}^{-1}m(Dg|_{E_j(g^i(x))})\le     \prod_{i=-k}^{-1}\|Dg|_{E_j(g^i(x))}\|\le T(x)\cdot {\rm e}^{(\chi_j+\varepsilon)kN},
		$
		\item\label{la:p2}
		$
	T(x)^{-1}\cdot {\rm e}^{(\chi_j-\varepsilon)kN}\le	\prod_{i=0}^{k-1}m(Dg|_{E_j(g^i(x))})  \le \prod_{i=0}^{k-1}\|Dg|_{E_j(g^i(x))}\|\le T(x)\cdot {\rm e}^{(\chi_j+\varepsilon)kN}.
		$
	\end{enumerate}
	
\end{proposition}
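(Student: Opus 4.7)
The plan is to follow the blueprint of Lemma~\ref{pro:fulles}, adapted to accommodate all $\ell$ Oseledets subbundles simultaneously and to produce both directions of the estimates (lower bounds on co-norms and upper bounds on norms). The first step is to realize $T(x)$ through the accumulation structure of the resonance blocks: repeating the argument from the proof of Lemma~\ref{T measurable} verbatim in the present multi-subbundle setting gives $x\in\bigcap_{t>T(x)}\Lambda_t$ for every $x\in\mathcal{R}^\nu$. Consequently, for each $\delta>0$ I can extract a sequence $\{n_k\}_{k\ge 1}\subset H_{T(x)+\delta}(\varepsilon,x_0,N)$ with $g^{n_k}(x_0)\to x$ as $k\to\infty$.

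Next, I fix $m\ge 1$ and $1\le j\le \ell$, and pick $n_k>m$. Since $n_k\in H_{T(x)+\delta}(\varepsilon,x_0,N)$, the bound $a^1_{n_k}(\varepsilon,x_0,N)\le T(x)+\delta$, specialized to the particular indices $m$ and $j$ inside the supremum defining $a^1$, yields simultaneously
\[
\prod_{i=n_k-m}^{n_k-1} m(Dg|_{E_j(g^i x_0)}) \ge (T(x)+\delta)^{-1}\cdot {\rm e}^{(\chi_j-\varepsilon)mN},
\]
\[
\prod_{i=n_k-m}^{n_k-1} \|Dg|_{E_j(g^i x_0)}\| \le (T(x)+\delta)\cdot {\rm e}^{(\chi_j+\varepsilon)mN}.
\]
Here the hypothesis that the Oseledets splitting extends continuously to a neighborhood $\mathcal{U}$ of $\overline{\Gamma}$ becomes essential: it guarantees that each $E_j$ and the map $Dg$ are uniformly continuous on $\mathcal{U}$, so the convergence $g^{n_k-s}(x_0)\to g^{-s}(x)$ for every $1\le s\le m$ (all limits lying in $\supp(\mu)\subset\overline{\Gamma}\subset\mathcal{U}$) forces the two finite products above to converge to $\prod_{i=-m}^{-1} m(Dg|_{E_j(g^i x)})$ and $\prod_{i=-m}^{-1}\|Dg|_{E_j(g^i x)}\|$ respectively. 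Passing to the limit $k\to\infty$ and then letting $\delta\to 0$ yields property~\eqref{la:p1}; the trivial pointwise inequality $m(L)\le\|L\|$ fills in the middle of the chain.

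Property~\eqref{la:p2} follows by the same scheme, now invoking the bound $a^2_{n_k}(\varepsilon,x_0,N)\le T(x)+\delta$, which produces analogous estimates for the forward products $\prod_{i=n_k}^{n_k+m-1}$ at $x_0$; these pass to $\prod_{i=0}^{m-1}$ at $x$ via $g^{n_k+s}(x_0)\to g^s(x)$. I do not expect any genuine obstacle beyond a single bookkeeping point worth highlighting: because the suprema defining $a^1_n$ and $a^2_n$ range over all $1\le j\le\ell$, the single control $T(x)+\delta$ simultaneously forces the estimates for every subbundle, so one and the same sequence $\{n_k\}$ delivers all four inequalities at $x$ for every $j$ in one stroke, with no need for diagonalization across $j$.
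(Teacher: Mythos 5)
Your proposal is correct and reproduces, with the necessary multi-bundle adaptations, exactly the argument the paper invokes by its one-line reference to ``similar to the proof of Lemma~\ref{pro:fulles}.'' In particular, you correctly identify that the single bound $a^i_{n_k}\le T(x)+\delta$ simultaneously controls both the norm from above and the co-norm from below for every $E_j$, that $g^{n_k-s}(x_0)\to g^{-s}(x)$ lands in $\supp(\mu)\subset\overline{\Gamma}$ where the extended splitting is continuous, and that the chain is completed by $m(L)\le\|L\|$; nothing further is needed.
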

 We are now ready to prove Theorem \ref{periodic approximation}.
 \begin{proof}[Proof of Theorem \ref{periodic approximation} ]
 		Let  $\varepsilon, N,g,\nu$ be as above.   Choose $r_0>0$ small enough  such that for any points $z_1, z_2\in \mathcal{U}$ with $d(z_1,z_2)\le r_0$, it holds that 
 	 \begin{equation}\label{42} 
 	 	\frac{\|Dg|_{E_j(z_1)}\|}{\|Dg|_{E_j(z_2)}\|},~ \frac{m(Dg|_{E_j(z_1)})}{m(Dg|_{E_j(z_2)})}\in \left({\rm e}^{-\varepsilon N}, {\rm e}^{\varepsilon N}\right),\quad \forall 1\le j\le \ell.
 	 \end{equation}
  Given any $t\ge 1$, let  $\beta_0$, $C_1$, $N_2$ be constants given by  Theorem \ref{th:shaodowg}. Define 
  \[\Omega_t=\left\{x\in \Lambda_t: \text{there exists} ~n_k\to \infty \text{~such that ~} f^{n_k}(x)\in \Lambda_t ~\text{and}~ f^{n_k}(x)\to x\right \}. \]
  Then by Poincar\'e recurrence theorem, $\nu(\Omega_t)=\nu(\Lambda_t)>0$.  Take $x\in \Omega_t.$ Then we can choose $n>\max\left\{N_2, \dfrac{\log t}{\varepsilon N}   \right\}$ such that  
  \[f^n(x)\in \Lambda_t~\text{and~}\beta:=d(f^nx,x)<\min\left\{\beta_0, {r_0/c_1}\right\},\]
 where  $\beta_0$, $C_1$, $N_2$ are constants given by  Theorem \ref{th:shaodowg}.   Then by   Theorem \ref{th:shaodowg}, there exists $p=f^n(p)$ such that
$$
d(g^{i}(p), g^i(x))\le C_1\cdot \beta, \quad \forall 0\le i\le n.
$$ 
Therefore, for any $1\le j\le \ell$, by Proposition \ref{Liaopesin} and equation  \eqref{42}, 

\begin{align*}
	\|Dg^n|_{E_j(p)}\| & \le \prod_{i=0}^{n-1}\|Dg|_{E_j(g^{i}p)}\|\\
	&\le \prod_{i=0}^{n-1}\left({\rm e}^{\varepsilon N}\cdot\|Dg|_{E_j(g^{i}x)}\|\right)\\
	&\le t\cdot {\rm e}^{(\chi_j+2\varepsilon)nN }\\
	&\le   {\rm e}^{(\chi_j+3\varepsilon)nN },
\end{align*}
   and 
  \begin{align*}
  	m(Dg^n|_{E_j(p)}) & \ge \prod_{i=0}^{n-1}m\left(Dg|_{E_j(g^{i}p)}\right)\\
  	&\ge \prod_{i=0}^{n-1}\left({\rm e}^{-\varepsilon N}\cdot m(Dg|_{E_j(g^{i}x)})\right)\\
  	&\ge t^{-1}\cdot {\rm e}^{(\chi_j-2\varepsilon)nN}\\
  	&\ge {\rm e}^{(\chi_j-3\varepsilon)nN}.
  \end{align*}
Denote by  $\lambda_1(\mu_p)\ge \cdots \ge \lambda_d(\mu_p)$ the Lyapunov exponents of $
 \mu_p$ counted with multiplicities. Then for $\sum_{i=1}^{j-1}\dim(E_i)<k\le \sum_{i=1}^{j}\dim(E_i)$, we conclude that 
 \[\chi_j-3\varepsilon\le \lambda_k(\mu_p)\le \chi_j+3\varepsilon. \]
 This completes the proof.
 \end{proof}
 
 At last, we provide a  sketch of the  proof of Theorem \ref{horseshoe approximation}.
 \begin{proof}[Sketch of proof of Theorem \ref{horseshoe approximation}]
 	 The proof closely follows that of Theorem \ref{main:horseshoe}. The key modification is to replace the estimates for  $\|Df^m|_{E(x)}\|$ and $m(Df^m|_{F(x)})$ in item (4) with the corresponding estimates for $\|Df^m|_{E_j(x)}\|$ and $m(Df^m|_{F_j(x)})$, which are provided by using Proposition \ref{Liaopesin}. As the remainder of the argument is standard, we omit the details.
 \end{proof}

\vskip 5pt

\flushleft{\bf Yongluo Cao} \\
\small Department of Mathematics and Statistics, and 
Center for Dynamical Systems and Differential Equation, Soochow University, Suzhou 215006, Jiangsu, P.R. China\\
\textit{E-mail:} \texttt{ylcao@suda.edu.cn}\\

\flushleft{\bf Zeya Mi} \\
\small School of Mathematics and Statistics, 
Nanjing University of Information Science and Technology, Nanjing, 210044, P.R. China\\
\textit{E-mail:} \texttt{mizeya@163.com}\\

\flushleft{\bf Rui Zou} \\
\small School of Mathematics and Statistics, 
Nanjing University of Information Science and Technology, Nanjing, 210044, P.R. China\\
\textit{E-mail:} \texttt{zourui@nuist.edu.cn}\\
\end{sloppypar}

\begin{thebibliography}{XA}

%
%
%
%
%
%
%
%
%
%
%
%
%
%
%
%
%
%
%
%
%
%
%
%
%
%
%
%
%
%
%
%
%
%
%
%
%
%
%
%
%
%
%
%
%
%
%
%
%
%
%
%
%
%
%
%
%
%







%
\bibitem{ABC}
F. Abdenur, C.Bonatti and S.~Crovisier, Nonuniform hyperbolicity for $C^1$-generic diffeomorphisms, {\it Israel J. Math.}, {\bf183} (2011), 1-60.

\bibitem{AB}
A. Avila and J. Bochi,
Nonuniform hyperbolicity, global dominated splittings and generic properties of volume-preserving diffeomorphisms.
{\it Trans. Amer. Math. Soc.}, {\bf 364} (2012), 2883-2907.
%

\bibitem{Ar98}
L. Arnold, Random dynamical systems, {\it Springer Monogr. Math.}, Springer-Verlag, Berlin, 1998.


%
%
%
	
%

\bibitem{BP07} L.Barreira and Y. Pesin,
Nonuniform hyperbolicity.
{\it Encyclopedia of Mathematics and its applications}, {\bf 115}. Cambridge University Press, Cambridge, 2007.
%
%
 
 

%








%



\bibitem{BC13}
C. Bonatti, S.Crovisier and K.Shinohara, The $C^{1+\alpha}$ hypothesis in Pesin Theory revisited, {\it J. Mod. Dyn.}, {\bf 7} (2013), no.4, 605-618.






%
%







%
%













\bibitem{CZ23}
Y. Cao and R. Zou, Horseshoes and Lyapunov exponents for Banach cocycles over non-uniformly hyperbolic systems, {\it Ergod. Th. $\&$ Dynam. Sys.}, {\bf 44(2)}(2024), 674-704.


\bibitem{CMZ25}
Y. Cao, Z. Mi and R. Zou, $C^1$ Pesin (un)stable manifolds without domination, {\it Int. Math. Res. Not. IMRN}, 2025, no. 19, Paper No. rnaf298, 16 pp.


\bibitem{CZZ}
Y. Chen, Y. Zang and R. Zou, Lyapunov irregular set of Banach cocycles, {\it Nonlinearity}, {\bf26} (2023), 5474-5497.







\bibitem{FY83}
A. Fathi, M.-R. Herman, and J.-C. Yoccoz, A proof of Pesin's stable
manifold theorem, Geometric dynamics ({R}io de {J}aneiro, 1981), Lecture
Notes in Math., vol. 1007, Springer, Berlin, 1983, pp.177--215.


\bibitem{GYY25}
S. Gan, Y. Tong and J. Yang, Ledrappier-Young entropy formula for $C^1$ diffeomorphisms with dominated splitting Part 1: Unstable entropy formula and invariance principle, arxiv:2509.16981. 2025.


\bibitem{Ge16}	
 K. Gelfert, Horseshoes for diffeomorphisms preserving hyperbolic measures, {\it Math. Z.} {\bf283}(2016) 685-701.
 	




%







%



\bibitem{Ka80}
A. Katok, Lyapunov exponents, entropy and periodic orbits for diffeomorphisms. {\it Publ. Math. Inst. Hautes \'Etudes Sci.}, {\bf 51} (1980), 137-173.

\bibitem{KH}
A. Katok, B. Hasselblatt, {\it Introduction to the Modern Theory of Dynamical Systems}, Cambridge University Press, Cambridge, 1995.




%
%

\bibitem{ly}
F. Ledrappier, L.-S. Young, The metric entropy of difeomorphisms Part I: Characterization of measures satisfying Pesin's entropy formula,
{\it Ann. Math.}, {\bf122} (1985), 509-539.


\bibitem{ly2}
F. Ledrappier, L.-S. Young, The metric entropy of diffeomorphisms Part II. Relations between entropy, exponents and dimension, {\it Ann. Math.}, {\bf122}(1985), 540-574.

%
%
%

\bibitem{LianLS12}
Z. Lian and L.-S. Young, Lyapunov exponents, periodic orbits, and horseshoes for
semiflows on Hilbert spaces, {\it J. Amer. Math. Soc.}, {\bf 25(3)}(2012):637-
665.

\bibitem{M84}
R.~Ma\~{n}\'{e}, Oseledec's theorem from the generic viewpoint,
Proceedings of the {I}nternational {C}ongress of {M}athematicians, {V}ol. 1,
2 ({W}arsaw, 1983), PWN, Warsaw, 1984, pp.~1269-1276.
	


\bibitem{MCY18}
Z.~Mi, Y.~Cao, and D.~Yang, \emph{S{RB} measures for attractors with continuous invariant splittings}, Math. Z. {\bf288} (2018), no.~1-2, 135-165.

\bibitem{O68}
V. I. Oseledets, A multiplicative ergodic theorem. Lyapunov characteristic
numbers for dynamical systems. {\it Trans. Mosc. Math. Soc.}, {\bf19} (1968), 197-231.
	
	
\bibitem{P76}
Y.~Pesin, Families of invariant manifolds that correspond to nonzero characteristic exponents, {\it Izv. Akad. Nauk SSSR Ser. Mat.}, \textbf{40} (1976), no.~6, 1332-1379, 1440.
	
\bibitem{P77}
Y.~Pesin, Characteristic {L}japunov exponents, and smooth ergodic theory, {\it Uspehi Mat. Nauk.}, \textbf{32} (1977), no.~4(196), 55-112, 287.
	
	
\bibitem{P84}
C.~Pugh, The {$C\sp{1+\alpha }$} hypothesis in Pesin theory, {\it Inst.
Hautes \'{E}tudes Sci. Publ. Math.}, (1984), no.~59, 143-161.
	









%

%







%
%
%

\bibitem{R79}
D.~Ruelle, Ergodic theory of differentiable dynamical systems, {\it Inst.
Hautes \'{E}tudes Sci. Publ. Math.}, (1979), no.~50, 27-58.


\bibitem{ST12}
W. Sun and X. Tian, Dominated splitting and Pesin’s entropy formula, {\it Discrete and Continuous Dynamical Systems}, {\bf32(4)}(2012), 1421-1434.

\bibitem{ST14}
W. Sun and X. Tian, Diffeomorphisms with Liao-Pesin set. arxiv:1004.0486v3, 2015.

\bibitem{T02}
A. Tahzibi, $C^1$ generic Pesin's entropy formula, {\it C. R. Math. Acad. Sci. Paris}, {\bf335(12)} (2002), 1057-1062.








%
%
%

%



%




\bibitem{WCZ21}
J. Wang, Y. Cao and R. Zou, The approximation of uniform hyperbolicity for $C^1$ diffeomorphisms with hyperbolic measures, {\it Journal of Differential Equations}, {\bf 275}(2021), 359-390.

\bibitem{WWZ}
X. Wang, L. Wang and Y. Zhu, Formula of entropy along unstable foliations for  C1  diffeomorphisms with dominated splitting
{\it Discrete Contin. Dyn. Syst.}, {\bf 38} (2018), 2125-2140.

\bibitem{wa82}
P. Walters, {\it An introduction to ergodic theory}, Springer Verlag, 1982.






%
%
%
%
%
%
%
%
%
%
%


 
 





%
%
%
	




%
%
%
%
%
%

%
	
%
%



%

%



%

%







	
%
    
	
%
%
%
%
%
%
%
	
%

%
	
%
%


%

%
%
%
%
%

%
%

%


%
%
%
%
%
%

%
%
%
%
%
\end{thebibliography}
\end{document}